\documentclass[12pt,reqno]{amsart}

\usepackage{amssymb,amsmath,amsthm}
\usepackage{mathrsfs}
\usepackage[margin=30mm]{geometry}
\usepackage{xcolor}
\usepackage{mathtools}
\mathtoolsset{showonlyrefs}

\usepackage{here}

\theoremstyle{definition}
\newtheorem{thm}{Theorem}[section]
\newtheorem{pro}[thm]{Proposition}
\newtheorem{lem}[thm]{Lemma}
\newtheorem{cor}[thm]{Corollary}

\theoremstyle{definition}
\newtheorem{dfn}{Definition}

\newtheorem{rem}{Remark}

\newtheorem{notation}{Notation}
\newtheorem*{conjecture}{Conjecture}

\usepackage{ascmac}

\newcommand{\bs}[1]{{\boldsymbol #1}}
\newcommand{\INN}[2]{\langle #1,#2 \rangle}
\newcommand{\tr}{\operatorname{tr}}

\newcommand{\Ker}{\operatorname{Ker}}
\newcommand{\id}{\operatorname{id}}

\usepackage[%
 setpagesize=false,%
 bookmarks=true,%
 bookmarksdepth=tocdepth,%
 bookmarksnumbered=true,%
 colorlinks=true,%
]{hyperref}

\title[Equifocal hypersurfaces and backward mean curvature flows]{Equifocal hypersurfaces in symmetric spaces of compact type and backward mean curvature flows}

\author{Kurando Baba}
\address{(K.B.) Department of Mathematics,
Faculty of Science and Technology,
Tokyo University of Science,
2641 Yamazaki, Noda, 278-8510, Chiba, Japan\\
Research Institute for Science and Technology at Tokyo University of Science,
Division of Joint Research of Geometry and Natural Science\\
Institut f\"ur Mathematik,
Universit\"at Augsburg, 86135 Augsburg, Germany
}
\email[primary]{kurando.baba@rs.tus.ac.jp}
\email[secondary]{kurando.baba@uni-a.de}
\thanks{The first author was partially supported by JSPS KAKENHI Grant Number 25K07017.}

\author{Naoyuki Koike}
\address{(N.K.) Department of Mathematics,
Faculty of Science,
Tokyo University of Science,
1-3 Kagurazaka, Shinjuku-ku, 162-8601, Tokyo, Japan\\
Research Institute for Science and Technology at Tokyo University of Science,
Division of Joint Research of Geometry and Natural Science}
\email{koike@rs.tus.ac.jp}
\thanks{The second author was partially supported by JSPS KAKENHI Grant Number 26K06815.}

\date{\today}

\subjclass[2020]{
53C44, 
53C40, 
53C35  
}

\keywords{mean curvature flow, equifocal hypersurface, isoparametric hypersurface}

\begin{document}
\maketitle

\begin{abstract}
We first derive a formula for the mean curvature
and the squared norm of the shape operator
of equifocal hypersurfaces in simply-connected irreducible symmetric spaces of compact type.
The formulas are given explicitly in terms of the tangential focal data of the equifocal hypersurfaces.
Third, we study the backward mean curvature flow for equifocal hypersurfaces.
The long-time existence of this flow for an equifocal hypersurface was established by Liu and Radeschi.
We analyze
the time evolution of the mean curvature and the squared norm of the shape operator along the long-time solution,
thereby we generalize the result of Liu and Terng for isoparametric hypersurfaces in the sphere.
Our analysis also gives an extension of Liu-Terng conjecture on the backward mean curvature flows
in the sphere to the simply-connected irreducible symmetric space of compact type.
\end{abstract}

\section{Introduction}

The mean curvature flow (abbreviated as MCF) is a geometric flow that evolves
submanifolds in the direction of their mean curvature vector fields,
and is expressed as the solution of a certain type of weak parabolic partial differential equation
(which is called the mean curvature flow equation).
The study of the MCF for certain kind of submanifolds has proceeded
along two parallel tracks: the analysis of forward singularities
and the analysis of backward ancient solutions (if there exist).
In both tracks, for a general initial submanifold,
neither a complete description of the long-time behavior nor an explicit expression of the solution
is available;
complete descriptions
and explicit expression are available only when the initial submanifold carries high symmetry.
The reason is that, for an initial submanifold with high symmetry,
the mean curvature flow equation reduces to an ordinary differential equation (abbreviated as ODE) on the normal slice.
As a typical case, Liu-Terng \cite{LiuTe09, LiuTe20} performed this reduction in both directions
for the case when the initial submanifold is an isoparametric submanifold in Euclidean spaces or in spheres.
Related developments have also been obtained in non-compact ambient spaces.
In the forward direction, the second author \cite{Koike14} studied the MCF for certain curvature-adapted isoparametric submanifolds
and their focal submanifolds in symmetric spaces of non-compact type.
Liu-Yang \cite{LiuWa26} investigated the MCF in both directions for isoparametric submanifolds
in hyperbolic spaces.

The purpose of this paper is to develop an analogue of Liu-Terng theory
for symmetric spaces of compact type with arbitrary rank.
In this setting, the natural class of submanifolds extending isoparametric hypersurfaces in the sphere is the class of equifocal submanifolds in the sense of Terng-Thorbergsson \cite{TeTh95}.
An equifocal submanifold is a class of submanifolds that 
inherits properties
characterizing isoparametric hypersurfaces in the sphere
within the framework of symmetric spaces of compact type.
Namely, equifocal submanifolds
have the global flatness of the normal bundle and the invariance
of a focal structure,
which is defined by the focal radius along the normal direction,
under parallel translations with respect to the normal connection
(see Definition \ref{dfn:def_equifocal}).
The theory of the MCF in the forward direction
was established by the second author \cite{Koike11}
by extending the results of Liu-Terng in \cite{LiuTe09} to the framework of equifocal submanifolds.
In fact, he constructed the solution of the MCF in the forward direction
and proved that the solution preserves the equifocality.
Furthermore, 
the solution collapse in finite time to a focal submanifold if its initial submanifold is not minimal.
The backward direction has been explored only partially by Liu-Radeschi \cite{LiuRa22}.
They showed that the flow in the backward direction has a long-time solution
and converges to a unique minimal equifocal submanifold.
Their method analyzes the flow without providing explicit solutions, unlike in \cite{LiuTe20}.
However, the asymptotic analysis of long-time solutions in the backward direction, carried out in \cite{LiuTe20} for isoparametric hypersurfaces, is still missing in symmetric spaces of compact type in a general setting.

In this paper,
we first derive formulas for the mean curvature
and the squared (Hilbert-Schmidt) norm of the shape operators
of equifocal hypersurfaces in a simply-connected irreducible symmetric space $N$
of compact type in terms of the geometric data encoded in their focal structure.
Assume that the Einstein constant of $N=G/K$ (which is Einstein) is equal to $\kappa$.
These are formulated by using the result of Terng-Thorbergsson (cf.~\cite[Theorem 1.6]{TeTh95}).
From this result we use the tangential focal data
$\Gamma^{F}(M)=\{\theta_{F},l/(2g_{F}),\{m^{F}_{1},m^{F}_{2}\};2g_{F}\}$.
See the statement of Theorem \ref{thm_TeTh95_thm1.6.b}
about this data.
With this notation, the formulas are stated as follows.

\begin{thm}[Theorem \ref{thm:equifocal_HA2_formula}]\label{thm:main1}
Let $M$ be an equifocal hypersurface of $N$.
We set constants $\delta_{F}$, $\bar{\theta}_{F}$
by
\begin{equation}
\delta_{F}:=\dfrac{m_{1}^{F}-m_{2}^{F}}{m_{1}^{F}+m_{2}^{F}},
\quad
\bar{\theta}_{F}:=\pi\dfrac{2g_{F}}{l}\cdot \theta_{F}.
\end{equation}
\begin{enumerate}
\item[(i)] The mean curvature $H$ of $M$ is expressed as
\begin{equation}
H=\dfrac{\pi g_{F}(m^{F}_{1}+m^{F}_{2})}{l \cdot \sin\bar{\theta}_{F}}
(\delta_{F}+\cos\bar{\theta}_{F}).
\end{equation}
\item[(ii)] The squared norm $\|A\|^{2}$ of the shape operator of $M$
is expressed as
\begin{equation}
\|A\|^{2}
=\dfrac{1}{2\epsilon_{F}\sin^{2}\bar{\theta}_{F}}(1+\delta_{F}\cos\bar{\theta}_{F})-\kappa,
\end{equation}
where $\epsilon_{F}$ is the constant given by
\begin{equation}
\epsilon_{F}:=\dfrac{1}{\pi^{2}}\left(\dfrac{l}{2g_{F}}\right)^{2}\cdot\dfrac{1}{m^{F}_{1}+m^{F}_{2}}.
\end{equation}
\end{enumerate}
\end{thm}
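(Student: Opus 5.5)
The plan is to prove (i) by viewing $H$ as a function of the parameter along the parallel family and determining it from the focal data alone; (ii) will then follow from (i) by one differentiation.

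Fix a unit normal field $\xi$ of $M$ and consider the parallel hypersurfaces $M_{t}=\{\exp_{x}(t\xi_{x})\}$, with shape operators $A_{t}$ and mean curvatures $H(t)=\tr A_{t}$, so that $H=H(0)$. Along each normal geodesic $\gamma(t)=\exp_{x}(t\xi_{x})$, the shape operator satisfies the Riccati equation $A_{t}'=A_{t}^{2}+R(\cdot,\dot\gamma)\dot\gamma$, possibly up to a sign fixed by the convention for $A$. Taking traces and using that $N$ is Einstein with constant $\kappa$ gives
\begin{equation}
H'(t)=\|A_{t}\|^{2}+\kappa .
\end{equation}
Hence (ii) is equivalent to computing $H'(0)$. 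Everything therefore reduces to finding an explicit closed form for $H(t)$ on a neighborhood of $t=0$.

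For that closed form I will use that $\gamma$ lies in a flat, so $R_{\dot\gamma}=R(\cdot,\dot\gamma)\dot\gamma$ is parallel along $\gamma$. Since $M$ is curvature-adapted, which I will take from the equifocal and Terng--Thorbergsson setting, $A_{t}$ and $R_{\dot\gamma}$ are simultaneously diagonalizable along $\gamma$. Each principal curvature then solves a scalar Riccati equation. In the eigenspace where $R_{\dot\gamma}=\mu\ge 0$, it equals $\sqrt{\mu}\cot(\sqrt{\mu}(t_{i}-t))$, or $1/(t_{i}-t)$ if $\mu=0$, for some $t_{i}$. The Mittag-Leffler expansion $\sqrt{\mu}\cot(\sqrt{\mu}x)=\sum_{k}1/(x-k\pi/\sqrt{\mu})$, with symmetric summation, rewrites each such term as a sum of simple poles. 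Those poles sit exactly at the focal times of that eigendirection. Summing over all principal curvatures, I expect
\begin{equation}
H(t)=\sum_{t_{0}}\frac{m(t_{0})}{t_{0}-t},
\end{equation}
where $t_{0}$ runs over the focal times of $M$ along $\gamma$ and $m(t_{0})$ is the focal multiplicity. The main obstacle is this step. I must check that the eigenvalue blocks of $R_{\dot\gamma}$ produce each focal time of $\gamma$ exactly once, with its correct multiplicity. I must also check that the rearrangement of the conditionally convergent series is legitimate; symmetric summation in each block should suffice, and the final answer has no ambiguity.

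Next I insert Theorem \ref{thm_TeTh95_thm1.6.b}. Up to the sign convention for $\theta_{F}$, the focal times are $-\theta_{F}+k\,l/(2g_{F})$, $k\in\mathbb{Z}$, and the multiplicities alternate $m^{F}_{1},m^{F}_{2}$. Put $s=\pi\frac{2g_{F}}{l}(\theta_{F}+t)$, so that $s(0)=\bar{\theta}_{F}$. The focal times with multiplicity $m^{F}_{1}$ are those with $s\in 2\pi\mathbb{Z}$, and those with multiplicity $m^{F}_{2}$ have $s\in\pi+2\pi\mathbb{Z}$. Resumming the two arithmetic progressions with the cotangent expansion gives
\begin{equation}
H(t)=\frac{\pi g_{F}}{l}\Bigl(m^{F}_{1}\cot\tfrac{s}{2}-m^{F}_{2}\tan\tfrac{s}{2}\Bigr).
\end{equation}
Equivalently, $H=-\frac{d}{dt}\log J$ with Jacobian $J=\sin^{m^{F}_{1}}(s/2)\cos^{m^{F}_{2}}(s/2)$. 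This expression serves as a consistency check against the orientation conventions.

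The double-angle identities $\cos^{2}(s/2)=(1+\cos s)/2$, $\sin^{2}(s/2)=(1-\cos s)/2$ and $\sin(s/2)\cos(s/2)=\tfrac12\sin s$ give
\begin{equation}
m^{F}_{1}\cot\tfrac{s}{2}-m^{F}_{2}\tan\tfrac{s}{2}=\frac{(m^{F}_{1}+m^{F}_{2})(\delta_{F}+\cos s)}{\sin s}.
\end{equation}
Evaluating at $t=0$ yields (i).

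For (ii), I differentiate, using $ds/dt=2\pi g_{F}/l$:
\begin{equation}
H'(t)=\Bigl(\frac{\pi g_{F}}{l}\Bigr)^{2}\bigl(m^{F}_{1}\csc^{2}\tfrac{s}{2}+m^{F}_{2}\sec^{2}\tfrac{s}{2}\bigr)
=\frac{2\pi^{2}g_{F}^{2}(m^{F}_{1}+m^{F}_{2})}{l^{2}}\cdot\frac{1+\delta_{F}\cos s}{\sin^{2}s}.
\end{equation}
The prefactor equals $1/(2\epsilon_{F})$ by the definition of $\epsilon_{F}$. Evaluating at $t=0$ and subtracting $\kappa$ from the traced Riccati identity gives (ii).
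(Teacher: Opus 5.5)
\textbf{The gap: curvature-adaptedness is not available.} The problem is the sentence ``Since $M$ is curvature-adapted, which I will take from the equifocal and Terng--Thorbergsson setting.'' Nothing in Definition~\ref{dfn:def_equifocal} or Theorem~\ref{thm_TeTh95_thm1.6.b} gives $[A_{\xi},R_{\xi}]=0$, and it fails for equifocal hypersurfaces in general.

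Consider $\mathbb{C}P^{n}$, where curvature-adapted means Hopf. By your own scalar Riccati formula, a curvature-adapted equifocal hypersurface there has principal curvatures $\sqrt{\mu}\cot(\sqrt{\mu}\,t_{i})$ with $\mu\in\{1,4\}$ and $t_{i}$ in the fixed discrete focal set. Hence they are constant. But there are isoparametric hypersurfaces in $\mathbb{C}P^{n}$ with non-constant principal curvatures (projections of Hopf-invariant isoparametric hypersurfaces of $S^{2n+1}$). These are equifocal, because their focal data coincide with those of the lift.

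Without simultaneous diagonalization you cannot solve the Riccati equation eigenvalue by eigenvalue, so the formula $H(t)=\sum_{t_{0}}m(t_{0})/(t_{0}-t)$ is never reached. The general substitute is $H(t)=-\frac{d}{dt}\log\det D(t)$ for the $M$-Jacobi tensor $D$. This places the poles and residues correctly, but it determines $H$ only up to an additive entire function, and proving that this term vanishes is the real content.

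The paper supplies exactly that step. It lifts to $\widetilde M=\Phi^{-1}(M)\subset H^{0}([0,1],\mathfrak g)$. Because the ambient space is flat, the nonzero eigenvalues of $\widetilde A_{\xi^{h}}$ are $1/t_{j,k}$ with the focal multiplicities (Lemma~\ref{lem:TT_MtildeM}), with no commutation hypothesis. The Heintze--Liu--Olmos identity $H=\tr_{r}\widetilde A_{\xi^{h}}$ then rules out any extra term. Only after this do the cotangent expansions enter.

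\textbf{What survives.} Your argument is a correct and more elementary proof for curvature-adapted equifocal hypersurfaces, such as principal orbits of Hermann actions. Your derivation of (ii) from the closed form of $H(t)$ along the parallel family uses the traced Riccati identity $H'=\|A_{t}\|^{2}+\kappa$. This is the same mechanism as Lemma~\ref{lem:DdtA_A2+Rxi} and Proposition~\ref{pro:tildeA_A_Ric}, so (ii) follows once (i) is known for all parallel hypersurfaces.

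\textbf{Orientation slips.} Take focal times $-\theta_{F}+k\,l/(2g_{F})$ and $s=\pi\frac{2g_{F}}{l}(\theta_{F}+t)$. Then both $\sum m(t_{0})/(t_{0}-t)$ and $-\frac{d}{dt}\log J$ equal minus your displayed $H(t)$. Also, differentiating your displayed $H(t)$ gives $H'<0$, not the positive expression you wrote. The fix is to use focal times $\theta_{F}+k\,l/(2g_{F})$ and $s=\bar{\theta}_{F}^{(t)}=\pi\frac{2g_{F}}{l}(\theta_{F}-t)$. With that choice everything is consistent, and your two sign slips cancel in the final formulas.
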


Theorem \ref{thm:main1} extends the result of Liu-Terng
\cite[(4.4) in Lemma 4.1 and (4.15) in Lemma 4.7]{LiuTe20} for isoparametric hypersurfaces in spheres.
The essential point is that, whereas the mean curvature $H$
of an isoparametric hypersurface in a sphere are described by the principal curvature data, in the equifocal case they are determined only by the tangential focal data $\Gamma^{F}(M)$.

\begin{rem}\label{rem:ricxixi}
If $N$ is reducible,
then it is not Einstein.
In this case, we can show that,
for a globally unit normal vector field $\bs{\xi}$ of the equifocal hypersurface $M$,
$\operatorname{Ric}^{N}(\bs{\xi},\bs{\xi})$ is constant on $M$ (see Appendix).
Then as $\kappa:=\operatorname{Ric}^{N}(\bs{\xi},\bs{\xi})$,
the statement of Theorem \ref{thm:main1} holds.
\end{rem}

The second aim of this paper is to provide explicit descriptions of
the mean curvature and
the squared norm of the shape operator along the MCF in the backward direction starting from
equifocal hypersurfaces $M$ in $N$,
and to give the asymptotic analysis of the long-time solutions performed in \cite{LiuTe20}.
The MCF starting from $M$ reduces to the ODE \eqref{eqn:MCF_ODE},
which is described by a vector field defined on a fundamental domain of the affine Coxeter group associated with $M$ (see \cite[p.~106]{Koike11}).
It follows from \cite[Lemma 3.1]{Koike11} that the solution to the mean curvature flow starting from
$M$ is given by the family of parallel hypersurfaces determined by the integral curves of this ODE.
Due to the second author
(\cite{Koike11})
and
Liu-Radeschi
(\cite{LiuRa22}),
the maximal solution
$\{M_{t}\}_{t}$
($M_{0}=M$)
of the MCF starting from an equifocal hypersurface $M$
exist in the range
$-\infty<t<T$,
where $T$ is a positive real number if $M$ is not minimal.
Let $A(t)$ and $H(t)$ denote
the shape operator and the mean curvature of $M_{t}$, respectively.

First, we describe explicitly $\|A(t)\|^{2}$ and 
the exponential decay of the mean curvature $H(t)$ as $t\to -\infty$.
It follows from Theorem \ref{thm:main1}
that the minimal equifocal hypersurface where $\{M_{t}\}_{t}$ converges as $t \to -\infty$
is characterized as a solution to the equation $\delta_{F} + \cos\bar{\theta}_{F} = 0$.
By using this fact we obtain the following theorem.

\begin{thm}[{Propositions \ref{pro:FRA_Ht_-inf} and \ref{pro:FRA_At_-inf}}]\label{thm:BMCF_evaluate}
Assume that $M_{0}=M$ is not minimal.
Under the above settings we have the followings:
\begin{enumerate}
\item[(i)] The following equality holds:
\begin{equation}
\lim_{t\to-\infty}
H(t)^{2}\exp(-\dfrac{1}{\epsilon_{F}}\cdot t)
=\dfrac{m^{F}_{1}+m^{F}_{2}}{4\epsilon_{F}(1-\delta_{F}^{2})}
(\delta_{F}+\cos\bar{\theta}_{F})^{2}.
\end{equation}
\item[(ii)] The following equality holds:
\begin{equation}
\displaystyle{\lim_{t\to-\infty}\|A(t)\|^{2}
=\dfrac{1-2\kappa\epsilon_{F}}{2\epsilon_{F}}
}.
\end{equation}
\end{enumerate}
\end{thm}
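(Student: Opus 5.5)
The plan is to reduce the flow to a scalar ODE for the tangential focal angle $\bar{\theta}_{F}$, and then combine its explicit solution with the formulas of Theorem \ref{thm:main1}.

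\textbf{Step 1: the ODE.} By \cite[Lemma 3.1]{Koike11}, each $M_{t}$ is a parallel hypersurface $M_{t}=\{\exp(r(t)\bs{\xi})\}$ of $M$, where $\bs{\xi}$ is the global unit normal field. Moving along $\bs{\xi}$ does not change $l$, $g_{F}$ or $m^{F}_{1},m^{F}_{2}$. It only translates the focal parameter, $\theta_{F}(t)=\theta_{F}(0)\pm r(t)$, because the distance to the focal submanifold changes by exactly $r$. The MCF equation $r'(t)=H(t)$ then becomes, via Theorem \ref{thm:main1}(i) and $\bar{\theta}_{F}=\pi(2g_{F}/l)\theta_{F}$,
\begin{equation}
\frac{d\bar{\theta}_{F}}{dt}=\mp\frac{2\pi g_{F}}{l}\cdot\frac{\pi g_{F}(m^{F}_{1}+m^{F}_{2})}{l}\cdot\frac{\delta_{F}+\cos\bar{\theta}_{F}}{\sin\bar{\theta}_{F}}
=\mp\frac{1}{2\epsilon_{F}}\cdot\frac{\delta_{F}+\cos\bar{\theta}_{F}}{\sin\bar{\theta}_{F}},
\end{equation}
where we used $1/\epsilon_{F}=4\pi^{2}g_{F}^{2}(m^{F}_{1}+m^{F}_{2})/l^{2}$. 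The sign is fixed by the Liu--Radeschi result: the backward flow exists for all $t<0$ and converges to the minimal member, which is the zero $\bar{\theta}^{*}_{F}\in(0,\pi)$ of $\delta_{F}+\cos\bar{\theta}_{F}$. For this to happen, $\bar{\theta}^{*}_{F}$ must be a repelling equilibrium in forward time.

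\textbf{Step 2: explicit solution.} Put $u(t):=\delta_{F}+\cos\bar{\theta}_{F}(t)$. Then $u'=-\sin\bar{\theta}_{F}\,\bar{\theta}_{F}'=\frac{1}{2\epsilon_{F}}u$ with the correct sign. Hence
\begin{equation}
u(t)=u(0)\,e^{t/(2\epsilon_{F})},\qquad u(0)=\delta_{F}+\cos\bar{\theta}_{F},
\end{equation}
where $\bar{\theta}_{F}$ is the value for $M_{0}=M$. In particular $u(t)\to0$ as $t\to-\infty$, so $\cos\bar{\theta}_{F}(t)\to-\delta_{F}$ and $\sin^{2}\bar{\theta}_{F}(t)\to1-\delta_{F}^{2}$. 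Since $\bar{\theta}_{F}(t)$ stays in $(0,\pi)$, the limit is $1-\delta_F^2>0$; this holds because $|\delta_{F}|<1$.

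\textbf{Step 3: the limits.} By Theorem \ref{thm:main1}(i),
\begin{equation}
H(t)^{2}e^{-t/\epsilon_{F}}=\Bigl(\frac{\pi g_{F}(m^{F}_{1}+m^{F}_{2})}{l}\Bigr)^{2}\frac{u(0)^{2}}{\sin^{2}\bar{\theta}_{F}(t)}.
\end{equation}
The prefactor equals $(m^{F}_{1}+m^{F}_{2})/(4\epsilon_{F})$, so letting $t\to-\infty$ gives (i). For (ii), substitute the limit into Theorem \ref{thm:main1}(ii):
\begin{equation}
\|A(t)\|^{2}\to\frac{1-\delta_{F}^{2}}{2\epsilon_{F}(1-\delta_{F}^{2})}-\kappa=\frac{1-2\kappa\epsilon_{F}}{2\epsilon_{F}}.
\end{equation}

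\textbf{Main obstacle.} The delicate point is Step 1. We must justify that the tangential focal data of $M_{t}$ are those of $M$ with only $\theta_{F}$ shifted by the normal distance, with $g_{F},l,m_{i}^{F}$ unchanged. We must also fix the orientation conventions so that $H$ in Theorem \ref{thm:main1} is measured with respect to the same $\bs{\xi}$ along which $\theta_{F}$ increases. I would settle this with Terng--Thorbergsson's description (Theorem \ref{thm_TeTh95_thm1.6.b}) of the focal structure of parallel hypersurfaces. I would cross-check the sign against the requirement that the backward solution is eternal.
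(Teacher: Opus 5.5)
Your proposal is correct and follows essentially the same route as the paper. The paper likewise combines $\xi'(t)=H(\xi(t))$ with the shifted focal data $\theta_F-\xi(t)$ of the parallel hypersurfaces to get $\bigl(\delta_F+\cos\bar\theta_F^{(\xi(t))}\bigr)'=\frac{1}{2\epsilon_F}\bigl(\delta_F+\cos\bar\theta_F^{(\xi(t))}\bigr)$ (Proposition \ref{pro:t-infty_costheta}), integrates it, and substitutes $\cos\bar\theta_F^{(\xi(t))}\to-\delta_F$ into the formulas of Proposition \ref{pro:HtAtnorm}. The only minor difference is the sign: the paper reads it directly off the convention $\bar\theta_F^{(t)}=\pi\frac{2g_F}{l}(\theta_F-t)$, whereas you fix it by requiring the minimal member to be a forward-repelling equilibrium, consistent with the Liu--Radeschi result.
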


Theorem \ref{thm:BMCF_evaluate}
extends
the results of Liu-Terng
\cite[Theorem 4.14]{LiuTe20}.
Here, we note that $0<\epsilon_{F}\leq 1$ holds (see \eqref{eqn:epF_range} below).

Second,
by using Theorem \ref{thm:main1},
we give an explicit
estimate of the ratio $\|A(t)\|^{2}/H(t)^{2}$ along the solution $\{M_{t}\}_{t}$ as follows.

\begin{thm}[{Theorem \ref{thm:FRA_At_Ht_ratio}}]\label{thm:main3}
Assume that
$M_{0}=M$ is not minimal.
Then we obtain the following assertions:
\begin{enumerate}
\item[(i)] In the case where $\delta_{F}=0$ and $\epsilon_{F}=\frac{1}{2\kappa}$ holds,
we have
\begin{equation}
\dfrac{\|A(t)\|^{2}}{H(t)^{2}}\equiv
\dfrac{2}{m_{1}^{F}+m_{2}^{F}}.
\end{equation}
\item[(ii)] Otherwise,
the following inequality holds:
\begin{equation}
\dfrac{\|A(t)\|^{2}}{H(t)^{2}}
<\dfrac{4\max\{m_{1}^{F},m_{2}^{F}\}}{(m^{F}_{1}+m^{F}_{2})^{2}}
\cdot\dfrac{1}{(\delta_{F}+\cos\bar{\theta}_{F})^{2}}\cdot
\exp(-\dfrac{1}{\epsilon_{F}}\cdot t).
\end{equation}
\end{enumerate}
\end{thm}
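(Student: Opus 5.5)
The plan is to combine the closed formulas of Theorem \ref{thm:main1} with an explicit solution of the reduced ODE \eqref{eqn:MCF_ODE}. By \cite[Lemma 3.1]{Koike11}, every $M_t$ is a parallel hypersurface of $M$. Its tangential focal data therefore has the same $l$, $g_F$, $m^F_1$, $m^F_2$, and hence the same $\delta_F$ and $\epsilon_F$. Only the parameter $\theta_F$ moves, so I write $\bar\theta(t)$ for the corresponding $\bar\theta_F$ of $M_t$, with $\bar\theta(0)=\bar\theta_F$. Theorem \ref{thm:main1} applies to each $M_t$, and everything reduces to knowing $\bar\theta(t)$.

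\textbf{Step 1: the ODE for $\bar\theta$.} The MCF moves $M_t$ in the normal direction with speed $H(t)$. Hence $\theta_F(t)$ changes at rate $\pm H(t)$, and $\bar\theta'=\pm\pi\frac{2g_F}{l}H$. Insert Theorem \ref{thm:main1}(i) and use the identity
\begin{equation}
\frac{1}{\epsilon_F}=\frac{4\pi^2g_F^2(m^F_1+m^F_2)}{l^2}.
\end{equation}
This gives
\begin{equation}
\bar\theta'(t)=\pm\frac{\delta_F+\cos\bar\theta(t)}{2\epsilon_F\sin\bar\theta(t)}.
\end{equation}
Now set $u(t):=\delta_F+\cos\bar\theta(t)$. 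Then $u'=-\sin\bar\theta\,\bar\theta'=\mp u/(2\epsilon_F)$, so $u$ is an explicit exponential. The sign is fixed by the fact, recalled before Theorem \ref{thm:BMCF_evaluate}, that $M_t$ tends to the minimal hypersurface $u=0$ as $t\to-\infty$. Thus
\begin{equation}
u(t)=u(0)\,e^{t/(2\epsilon_F)}, \qquad u(0)=\delta_F+\cos\bar\theta_F\neq 0,
\end{equation}
where $u(0)\neq0$ because $M$ is not minimal.

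\textbf{Step 2: the ratio.} From Theorem \ref{thm:main1}(i) we get
\begin{equation}
H(t)^2=\frac{m^F_1+m^F_2}{4\epsilon_F}\cdot\frac{u(t)^2}{\sin^2\bar\theta(t)}.
\end{equation}
Dividing Theorem \ref{thm:main1}(ii) by this yields
\begin{equation}
\frac{\|A(t)\|^2}{H(t)^2}=\frac{2\bigl(1+\delta_F\cos\bar\theta(t)\bigr)-4\epsilon_F\kappa\sin^2\bar\theta(t)}{(m^F_1+m^F_2)\,u(t)^2}.
\end{equation}

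\textbf{Case (i).} If $\delta_F=0$ and $4\epsilon_F\kappa=2$, the numerator equals $2-2\sin^2\bar\theta=2\cos^2\bar\theta=2u^2$. The ratio is then identically $2/(m^F_1+m^F_2)$.

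\textbf{Case (ii).} Since $N$ is of compact type we have $\kappa>0$, and $\sin\bar\theta(t)\neq0$ for regular $M_t$. The numerator is therefore strictly less than
\begin{equation}
2(1+|\delta_F|)=\frac{4\max\{m^F_1,m^F_2\}}{m^F_1+m^F_2}.
\end{equation}
Substituting $u(t)^2=u(0)^2e^{t/\epsilon_F}$ then gives exactly the stated inequality.

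The main obstacle is Step 1: getting the normalization and sign right when relating the normal displacement under the flow to the change of $\theta_F$. This requires checking, via \eqref{eqn:MCF_ODE} and \cite{Koike11}, that the fundamental-domain coordinate and $\theta_F$ from Theorem \ref{thm_TeTh95_thm1.6.b} are related with unit speed and the correct orientation, consistent with the sign of $H$ in Theorem \ref{thm:main1}(i). I would first confirm the decay direction independently through Theorem \ref{thm:BMCF_evaluate}(i), whose exponent $e^{t/\epsilon_F}$ matches $u^2$.
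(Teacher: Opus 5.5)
Your proposal is correct and follows essentially the paper's argument. Both combine the exponential law $\delta_F+\cos\bar\theta(t)=(\delta_F+\cos\bar\theta_F)e^{t/(2\epsilon_F)}$ (Proposition \ref{pro:t-infty_costheta}, whose sign the paper fixes directly from $\xi'=H$ and $\bar\theta^{(\xi)}=\pi\frac{2g_F}{l}(\theta_F-\xi)$) with the formulas of Proposition \ref{pro:HtAtnorm}; your bound $2(1+\delta_F\cos\bar\theta)-4\kappa\epsilon_F\sin^2\bar\theta<2(1+|\delta_F|)$ is a streamlined form of the paper's convex-quadratic endpoint estimate, and since the version stated here already assumes $\delta_F=0$ in (i), your direct computation suffices without the paper's Proposition \ref{pro:eF1_deltaF0}, which it needs only because its Section 4 version assumes just $2\kappa\epsilon_F=1$.
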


Theorem \ref{thm:main3} extends the result of Liu-Terng
\cite[Corollary 4.12]{LiuTe20} for isoparametric hypersurfaces in spheres.

Third, we give two distinct aspects that are expected to contribute to the study of the characterization
of equifocal hypersurfaces in terms of MCF theory.

\begin{thm}[{Theorem \ref{thm:BMCF_HA_evaluate}}]\label{thm:BMCF_evaluate2}
Assume that
$M_{0}=M$ is not minimal.
In the case of $\epsilon_{F} < \frac{1}{2\kappa}$,
there exist $t_{0}>0$ and $c_{1},c_{2}>0$ satisfying the following inequalities:
\begin{equation}\label{eqn:conj1}
c_{2}\cdot\exp(-\dfrac{1}{\epsilon_{F}}\cdot t)
\leq 
\dfrac{\|A(t)\|^{2}}{H(t)^{2}}
\leq c_{1}\cdot\exp(-\dfrac{1}{\epsilon_{F}}\cdot t)
\quad
(t<-t_{0}).
\end{equation}
\end{thm}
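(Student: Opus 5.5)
We combine the asymptotics of Theorem \ref{thm:BMCF_evaluate} for $H(t)^{2}$ and $\|A(t)\|^{2}$ separately and take their quotient. Throughout, $\theta_{F}$ in Theorem \ref{thm:BMCF_evaluate}(i) denotes the angle of the initial hypersurface $M_{0}=M$. Since $M$ is not minimal, $\delta_{F}+\cos\bar{\theta}_{F}\neq 0$ at $t=0$. Hence the limit
\begin{equation}
L_{H}:=\lim_{t\to-\infty}H(t)^{2}\exp\Bigl(-\dfrac{t}{\epsilon_{F}}\Bigr)=\dfrac{m^{F}_{1}+m^{F}_{2}}{4\epsilon_{F}(1-\delta_{F}^{2})}(\delta_{F}+\cos\bar{\theta}_{F})^{2}
\end{equation}
is a strictly positive finite number. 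Here $|\delta_{F}|<1$ because both multiplicities are positive, so the denominator is positive.

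By Theorem \ref{thm:BMCF_evaluate}(ii), $\|A(t)\|^{2}\to L_{A}:=\frac{1-2\kappa\epsilon_{F}}{2\epsilon_{F}}$. The hypothesis $\epsilon_{F}<\frac{1}{2\kappa}$ is exactly what makes $L_{A}>0$. This is the only place the hypothesis enters, and it is the crucial point: without it the limit could vanish and the lower bound would fail, as in the degenerate case of Theorem \ref{thm:main3}(i), where the ratio stays constant.

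We then write
\begin{equation}
\dfrac{\|A(t)\|^{2}}{H(t)^{2}}\exp\Bigl(\dfrac{t}{\epsilon_{F}}\Bigr)=\dfrac{\|A(t)\|^{2}}{H(t)^{2}\exp(-t/\epsilon_{F})}\longrightarrow \dfrac{L_{A}}{L_{H}}\in(0,\infty)\quad(t\to-\infty).
\end{equation}
By the definition of the limit there is $t_{0}>0$ such that for $t<-t_{0}$ this quantity lies in $\bigl[\tfrac{1}{2}L_{A}/L_{H},\,2L_{A}/L_{H}\bigr]$. Setting $c_{2}=\tfrac12 L_{A}/L_{H}$ and $c_{1}=2L_{A}/L_{H}$ gives \eqref{eqn:conj1}.

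We also need $H(t)\neq0$ for all $t$, so that the quotient is defined. This follows from Theorem \ref{thm:main1}(i) together with the fact that the backward flow stays in the open chamber and does not cross the minimal hypersurface in finite time. Alternatively, $t_{0}$ can simply be chosen large enough that $H(t)^{2}\exp(-t/\epsilon_{F})>L_{H}/2>0$ for $t<-t_{0}$.

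The main obstacle is already handled by Propositions \ref{pro:FRA_Ht_-inf} and \ref{pro:FRA_At_-inf}: establishing the exact exponential rate of $H(t)$ and the nonzero limit of $\|A(t)\|^{2}$. Given these, the remaining steps are the positivity checks of $L_{H}$ and $L_{A}$ described above.
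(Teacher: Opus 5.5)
Your proposal is correct and follows the paper's proof essentially verbatim. The paper also divides the limits of Propositions \ref{pro:FRA_At_-inf} and \ref{pro:FRA_Ht_-inf} to get $\lim_{t\to-\infty}\frac{\|A(t)\|^{2}}{H(t)^{2}}\exp(\frac{t}{\epsilon_{F}})=\frac{2(1-\delta_{F}^{2})(1-2\kappa\epsilon_{F})}{(m_{1}^{F}+m_{2}^{F})(\delta_{F}+\cos\bar{\theta}_{F})^{2}}>0$, which is your $L_{A}/L_{H}$, and reads off $c_{1},c_{2},t_{0}$ from it; your extra checks (positivity of $L_{H}$ from non-minimality, and $H(t)\neq 0$, which also follows directly from Proposition \ref{pro:t-infty_costheta}) only make explicit what the paper leaves implicit.
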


We also derive an estimate for the traceless part 
$\mathring{A}(t)$ of $A(t)$
along the solution $\{M_{t}\}_{t}$.
Let $\phi(t)$
denote the squared norm of $\mathring{A}(t)$,
that is,
\begin{equation}
\phi(t):=\|\mathring{A}(t)\|^{2},
\quad
\mathring{A}(t):=A(t)-\dfrac{H(t)}{\dim M_{t}}\mathrm{id}.
\end{equation}
Then we have the following theorem.

\begin{thm}[Theorem \ref{thm:prop_phi}]\label{thm:BMCF_evaluate3}
For any $0<h<1$, there exists $c_{0}>0$ such that
one of the following assertions holds:
\begin{enumerate}
\item[(i)] The case of $\delta_{F}=0$:
If $\theta_{F}\in (\theta_{\min}-c_{0},\theta_{\min}+c_{0})$, we have
\begin{equation}\label{eqn:cor_phi1}
\dfrac{1-2\kappa\epsilon_{F}}{2\epsilon_{F}}
\leq
\phi(t)
\leq
\dfrac{1-2\kappa\epsilon_{F}}{2\epsilon_{F}}+h\quad
(t<0).
\end{equation}
\item[(ii)] The case of $\delta_{F}>0$: If $\theta_{F}\in (\theta_{\min}-c_{0},\theta_{\min})$,
we have
\begin{equation}\label{eqn:cor_phi2}\noeqref{eqn:cor_phi2}
\dfrac{1-2\kappa\epsilon_{F}}{2\epsilon_{F}}-h
\leq
\phi(t)
\leq
\dfrac{1-2\kappa\epsilon_{F}}{2\epsilon_{F}}
\quad(t<0).
\end{equation}
If $\theta_{F}\in (\theta_{\min},\theta_{\min}+c_{0})$, we have
\begin{equation}\label{eqn:cor_phi3}\noeqref{eqn:cor_phi3}
\dfrac{1-2\kappa\epsilon_{F}}{2\epsilon_{F}}\leq
\phi(t)
\leq
\dfrac{1-2\kappa\epsilon_{F}}{2\epsilon_{F}}+h
\quad
(t<0).
\end{equation}
\item[(iii)] In the case of $\delta_{F}<0$:
If $\theta_{F}\in (\theta_{\min}-c_{0},\theta_{\min})$,
we have
\begin{equation}\label{eqn:cor_phi4}\noeqref{eqn:cor_phi4}
\dfrac{1-2\kappa\epsilon_{F}}{2\epsilon_{F}}
\leq
\phi(t)
\leq
\dfrac{1-2\kappa\epsilon_{F}}{2\epsilon_{F}}+h
\quad
(t<0).
\end{equation}
If $\theta_{F}\in (\theta_{\min},\theta_{\min}+c_{0})$,
we have
\begin{equation}\label{eqn:cor_phi5}
\dfrac{1-2\kappa\epsilon_{F}}{2\epsilon_{F}}-h
\leq
\phi(t)
\leq
\dfrac{1-2\kappa\epsilon_{F}}{2\epsilon_{F}}
\quad(t<0).
\end{equation}
\end{enumerate}
\end{thm}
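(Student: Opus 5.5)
The plan is to reduce the statement to a one-variable calculus problem in the parameter $\theta_{F}$ and then use the one-dimensional nature of the backward flow. By Theorem \ref{thm:main1}, both $\|A\|^{2}$ and $H$ of an equifocal hypersurface are explicit functions of $\bar{\theta}_{F}$ alone, with the other constants ($\delta_{F}$, $\epsilon_{F}$, $m^{F}_{i}$, $g_{F}$, $l$) fixed along the flow. Since $M_{t}$ runs through parallel hypersurfaces of $M$, only $\theta_{F}=\theta_{F}(t)$ changes. The dimension $n=\dim M_{t}$ is also constant along the family; I would express it through the multiplicities, expecting $n=g_{F}(m^{F}_{1}+m^{F}_{2})$. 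Then
\begin{equation}
\phi=\|A\|^{2}-\frac{H^{2}}{n}=:f(\bar{\theta}_{F})
=\frac{1+\delta_{F}\cos\bar{\theta}_{F}}{2\epsilon_{F}\sin^{2}\bar{\theta}_{F}}-\kappa
-C\,\frac{(\delta_{F}+\cos\bar{\theta}_{F})^{2}}{\sin^{2}\bar{\theta}_{F}}
\end{equation}
for an explicit constant $C>0$. The value of $C$ will not matter, only its positivity.

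\textbf{Step 1: value at the minimal slice.} Let $\theta_{\min}$ be the parameter with $\cos\bar{\theta}_{\min}=-\delta_{F}$, which characterizes the minimal limit. There $\sin^{2}\bar{\theta}=1-\delta_{F}^{2}$ and $H=0$, so
\begin{equation}
f(\bar{\theta}_{\min})=\frac{1-\delta_{F}^{2}}{2\epsilon_{F}(1-\delta_{F}^{2})}-\kappa=\frac{1-2\kappa\epsilon_{F}}{2\epsilon_{F}}.
\end{equation}
This is the common reference value in all three cases.

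\textbf{Step 2: derivative at $\theta_{\min}$.} The $H^{2}$-term contains the factor $(\delta_{F}+\cos\bar{\theta})^{2}$, so its derivative vanishes at $\theta_{\min}$. A direct computation gives
\begin{equation}
\frac{d}{d\bar{\theta}}\frac{1+\delta\cos\bar{\theta}}{\sin^{2}\bar{\theta}}=-\frac{\delta\sin^{2}\bar{\theta}+2\cos\bar{\theta}+2\delta\cos^{2}\bar{\theta}}{\sin^{3}\bar{\theta}}.
\end{equation}
At $\cos\bar{\theta}=-\delta$ this equals $\delta/\sin\bar{\theta}_{\min}$. Hence $f'(\bar{\theta}_{\min})=\delta_{F}/(2\epsilon_{F}\sin\bar{\theta}_{\min})$, whose sign is the sign of $\delta_{F}$.

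For $\delta_{F}\neq0$, choose $c_{0}$ so that on $(\theta_{\min}-c_{0},\theta_{\min}+c_{0})$ the function $f$ is strictly monotone and $|f-f(\bar{\theta}_{\min})|<h$; this is possible by continuity. Then for $\delta_{F}>0$, $f$ lies below the reference value on the left of $\theta_{\min}$ and above it on the right, which is case (ii); case (iii) is the reversed picture.

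For $\delta_{F}=0$, $\theta_{\min}$ corresponds to $\bar{\theta}=\pi/2$. The first term $1/(2\epsilon_{F}\sin^{2}\bar{\theta})-\kappa$ attains its minimum there, and the subtracted term $C\cos^{2}\bar{\theta}/\sin^{2}\bar{\theta}$ is nonnegative. So I would check directly that $1/(2\epsilon_{F})-C\ge0$, using the explicit value of $C$, so that $f(\bar{\theta})-f(\pi/2)=\bigl(1/(2\epsilon_{F})-C\bigr)\cot^{2}\bar{\theta}\ge0$. Continuity then gives the upper bound with $h$, which is case (i).

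\textbf{Step 3: behaviour of the flow.} I then use the ODE \eqref{eqn:MCF_ODE} for $\theta_{F}(t)$, which is one-dimensional for hypersurfaces. Its only zero in the fundamental interval is $\theta_{\min}$, by Liu--Radeschi convergence and the characterization $\delta_{F}+\cos\bar{\theta}_{F}=0$. So for $t<0$ the solution $\theta_{F}(t)$ moves monotonically from $\theta_{F}(0)$ toward $\theta_{\min}$ and never crosses it. Consequently $\theta_{F}(t)$ stays in the interval between $\theta_{F}(0)$ and $\theta_{\min}$, which lies inside $(\theta_{\min}-c_{0},\theta_{\min}+c_{0})$ on the same side as $\theta_{F}(0)$. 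Applying Step 2 to $f(\bar{\theta}_{F}(t))$ gives the stated inequalities for all $t<0$.

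The main obstacle I expect is the constant bookkeeping in the $\delta_{F}=0$ case: confirming the dimension formula for $n$ in terms of the tangential focal data, and that the resulting $C$ satisfies $C\le 1/(2\epsilon_{F})$. If that comparison fails, I would instead argue via the second derivative of $f$ at $\pi/2$. That route only yields the lower bound near $\theta_{\min}$, which is what $c_{0}$ allows anyway.
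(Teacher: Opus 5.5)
Your route is essentially the paper's. You write $\phi$ along the flow as an explicit function of $\bar{\theta}_{F}$ (the paper uses $u=\cos\bar{\theta}_{F}$) and evaluate it at $\theta_{\min}$. You then read off the sign of $\phi-\frac{1-2\kappa\epsilon_{F}}{2\epsilon_{F}}$ near $\theta_{\min}$ from the first derivative when $\delta_{F}\neq0$, and from an exact nonnegative expression when $\delta_{F}=0$. The case $\delta_{F}\neq0$ and the flow argument are correct, and in two respects tidier than the paper:
\begin{enumerate}
\item[(a)] The $H^{2}$-term vanishes to second order at $\theta_{\min}$, so your value $f'(\bar{\theta}_{\min})=\delta_{F}/(2\epsilon_{F}\sin\bar{\theta}_{\min})$ involves neither $\dim M$ nor the multiplicities. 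The paper instead factors out $\frac{2\dim M-(m^{F}_{1}+m^{F}_{2})}{4\epsilon_{F}\dim M}$ and tacitly needs it to be positive.
\item[(b)] Your Step 3 makes explicit that for $t<0$ the solution stays strictly between $\theta_{F}$ and $\theta_{\min}$, which the paper leaves unsaid. This can also be read off directly from Proposition \ref{pro:t-infty_costheta}: $\delta_{F}+\cos\bar{\theta}_{F}^{(\xi(t))}$ keeps its sign and shrinks in modulus for $t<0$.
\end{enumerate}

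The step that fails as planned is the bookkeeping for $\delta_{F}=0$. Your expected formula $\dim M=g_{F}(m^{F}_{1}+m^{F}_{2})$ is wrong already in spheres, where $2\dim M=g_{F}(m^{F}_{1}+m^{F}_{2})$. In a general symmetric space no such formula holds: a geodesic sphere in $\mathbb{C}P^{k}$ ($k\geq2$) has $g_{F}=1$ and $\{m^{F}_{1},m^{F}_{2}\}=\{1,2k-1\}$, but $\dim M=2k-1$.

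Your fallback does not rescue this. Since $f(\bar{\theta})-f(\pi/2)=\bigl(\frac{1}{2\epsilon_{F}}-C\bigr)\cot^{2}\bar{\theta}$ exactly, the second derivative at $\pi/2$ equals $2\bigl(\frac{1}{2\epsilon_{F}}-C\bigr)$. If the comparison failed, the lower bound would fail too.

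What you actually need is only an inequality. From $\frac{\pi^{2}g_{F}^{2}(m^{F}_{1}+m^{F}_{2})^{2}}{l^{2}}=\frac{m^{F}_{1}+m^{F}_{2}}{4\epsilon_{F}}$, the correct constant is $C=\frac{m^{F}_{1}+m^{F}_{2}}{4\epsilon_{F}\dim M}$. So $C\le\frac{1}{2\epsilon_{F}}$ is equivalent to $m^{F}_{1}+m^{F}_{2}\le2\dim M$. This holds because each focal multiplicity $m^{F}_{\bs{v}}=\dim d\psi_{\bs{v}}(\Ker d\exp^{\perp}_{\bs{v}})$ is the dimension of a subspace of $T_{x}M$. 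The paper uses the same fact implicitly. With this replacement your case (i) is complete, and the whole proposal goes through.
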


Theorems
\ref{thm:BMCF_evaluate2} and
\ref{thm:BMCF_evaluate3} extend
the results of Liu-Terng
\cite[Corollary 4.15 and Proposition 4.17]{LiuTe20}.
Theorem \ref{thm:BMCF_evaluate2} describes how the balance of curvature changes over time, while
Theorem \ref{thm:BMCF_evaluate3} gives restrictions on the range of possible curvature values as rigidity conditions.
These results are expected to provide deeper insight into the classification of ancient solutions of MCF.
Namely,
we can generalize Liu-Terng conjectures
\cite[Conjectures 4.16, 4.18]{LiuTe20}
for the MCFs in the sphere
to those in the simply-connected irreducible symmetric space of compact type
as follows:

\begin{conjecture}[generalized Liu-Terng conjecture]\label{conj:gLTc}
Let $M$ be a compact hypersurface in a simply-connected irreducible symmetric space $N$ of compact type.
Let $\bs{\xi}$ be a unit normal vector field on $M$.
Assume that
the Einstein constant of $N$ is equal to $\kappa$.
Let $\{M_{t}\}_{t\in(-T_{\min},T_{\max})}$
denote the maximal solution of the MCF
starting from $M$,
and $A(t)$ and $H(t)$ denote the shape operator
and the mean curvature of $M_{t}$, respectively.
Assume that $T_{\min}=\infty$.
\begin{enumerate}
\item[(i)] If $A(t)$ and $H(t)$ satisfy the inequality \eqref{eqn:conj1}
for some $\epsilon\in(0,1/(2\kappa))$ (instead of $\epsilon_{F}$),
then 
$M$ is an equifocal hypersurface satisfying $\epsilon_{F}=\epsilon$,
where $\epsilon_{F}$ is as in the tangential focal data of the equifocal
hypersurface.
\item[(ii)] If $A(t)$ and $H(t)$
satisfy one of the inequalities
\eqref{eqn:cor_phi1}--\eqref{eqn:cor_phi5}
for some $\epsilon\in(0,1/(2\kappa))$ (instead of $\epsilon_{F}$)
and $h\in(0,1)$,
then 
$M$ is an equifocal hypersurface satisfying $\epsilon_{F}=\epsilon$.
\end{enumerate}
\end{conjecture}

The original Liu-Terng conjecture was discussed in \cite{LiuTe20}.
In particular, they explained its connection to Chern conjecture in the static case,
which is known as one of the problems in the problem section due to Yau \cite{Yau82}.
Our results of this paper may suggest a possible extension of Chern conjecture to minimal hypersurfaces in irreducible symmetric spaces of compact type.

This paper is organized as follows.
In Section \ref{sec:pre}, we recall some fundamental results on equifocal hypersurfaces
that will be used throughout the paper.
In Section \ref{sec:FRA}, we prove Theorems \ref{thm:main1}.
In Section \ref{sec:FRA_MCF}, we analyze the long-time solutions of the backward MCF
starting from an equifocal hypersurface
and prove the remaining results stated in the Introduction.
In Appendix \ref{sec:const_ricxixi},
we give a proof of the constancy of $\operatorname{Ric}^N(\xi,\xi)$
stated in Remark \ref{rem:ricxixi}.

\section{Preliminaries}\label{sec:pre}

Let $N$ be a complete Riemannian manifold
with Riemannian curvature tensor field $R$
and exponential map $\exp$.
Let $M$ be an immersed submanifold in $N$.
Denote by 
$\psi:T^{\perp}M\to M$ the normal bundle of $M$
and by $\exp^{\perp}$ the normal exponential map
of $M$ which is defined by
\begin{equation}
\exp^{\perp}(\bs{\xi})=\exp_{x}(\bs{\xi})
\quad
(x\in M,\,
\bs{\xi}\in T^{\perp}_{x}M).
\end{equation}
A normal vector $\bs{\xi}\in T^{\perp}_{x}M$
($x\in M$)
is said to be \textit{singular} for $\exp^{\perp}$
if $d\psi_{\bs{\xi}}(\operatorname{Ker}d\exp^{\perp}_{\bs{\xi}})\neq\{0\}$
holds,
in which case 
$\bs{\xi}$ is called a \textit{focal normal}
at $x$
and the point $\exp^{\perp}(\bs{\xi})\in N$
is called a \textit{focal point}
of $M$ at $x$.
For any focal normal $\bs{\xi}$,
the integer
$m^{F}_{\bs{\xi}}:=\dim_{\mathbb{R}}
d\psi_{\bs{\xi}}(\operatorname{Ker}d\exp^{\perp}_{\bs{\xi}})$
is called its \textit{multiplicity},
and its length $\|\bs{\xi}\|$
is called its \textit{focal radius}.
The set of pairs consisting of focal normals
at $x$ and their multiplicities
are called the \textit{tangential focal data}
of $M$ at $x$, which we write $\Gamma^{F}(M,x)$,
and $\Gamma^{F}(M)=\bigcup_{x\in M}\Gamma^{F}(M,x)$
is called the tangential focal data of $M$.

\subsection{Equifocal hypersurfaces and their tangential focal data}

We recall the notion of equifocal submanifolds
in symmetric spaces of compact type,
which was introduced by Terng-Thorbergsson \cite{TeTh95}.

\begin{dfn}[{\cite{TeTh95}}]\label{dfn:def_equifocal}
A compact immersed submanifold $M$ in a symmetric space
$N$ of compact type
is said to be \textit{equifocal}
if it satisfies the following conditions:
\begin{enumerate}
\item[(i)] $T^{\perp}M$ is globally flat, that is,
the normal connection $\nabla^{\perp}$ is flat and has trivial holonomy group.
\item[(ii)] $T^{\perp}M$ is abelian,
that is,
for each $x\in M$,
the set
$\exp_{x}(T^{\perp}_{x}M)$
is contained in a flat totally geodesic submanifold of $N$.
\item[(iii)] $\Gamma^{F}(M)$ is invariant under the parallel translation with respect to $\nabla^{\perp}$.
\end{enumerate}
Here,
we note that
$\exp_{x}(T^{\perp}_{x}M)$ is called a \textit{flat section} of $M$.
\end{dfn}

The notion of an equifocal submanifold is modeled on two fundamental classes of submanifolds: isoparametric hypersurfaces in spheres
and principal orbits of hyperpolar actions on symmetric spaces of compact type.
In this paper,
we focus our attention on equifocal hypersurfaces
in symmetric spaces of compact type
as the main objects of study.

We first recall some results about equifocal hypersurfaces
following \cite{TeTh95}.
Let $N$ be a simply-connected symmetric space of compact type.
Throughout this paper, we always assume that $N$ is simply-connected.
Let $M$ be an equifocal hypersurface in $N$.
It follows from \cite[Theorem 1.6, (c)]{TeTh95}
that $M$ is embedded.
Let $\bs{\xi}$ be a unit normal vector field on $M$.
For a real number $t$,
the map $\eta_{t\bs{\xi}}:M\to N$
defined by $\eta_{t\bs{\xi}}(x):=\exp^{\perp}(t\bs{\xi}_{x})$
for $x\in M$
is called the endpoint map of $M$ for $t\bs{\xi}$.
According to (iii) of Definition \ref{dfn:def_equifocal},
$\eta_{t\bs{\xi}}$ is of constant rank.
The image $M_{t\bs{\xi}}:=\eta_{t\bs{\xi}}(M)$ of $\eta_{t\bs{\xi}}$
is called a \textit{parallel submanifold} of $M$
if $\eta_{t\bs{\xi}}$ is of full rank;
otherwise,
$M_{t\bs{\xi}}$
is called a \textit{focal submanifold} of $M$.
Any parallel submanifold of $M$
is diffeomorphic to $M$ via $\eta_{t\bs{\xi}}$.
Fix $x\in M$.
We write $\gamma_{\bs{\xi}_{x}}$
as the normal geodesic of $M$
satisfying $\gamma_{\bs{\xi}_{x}}(0)=x$
and $\gamma_{\bs{\xi}_{x}}'(0)=\bs{\xi}_{x}$.
Then it follows from 
\cite[Theorem 1.6, (a)]{TeTh95}
that $T_{x}:=\mathrm{Im}(\gamma_{\bs{\xi}_{x}})$
is a circle in $N$,
which is called the \textit{normal circle} of $M$ at $x$.
Furthermore,
the length of $T_{x}$ does not depend on the choice of $x$,
which is denoted by $l$.
Hence the endpoint map $\eta_{l\cdot\bs{\xi}}:M\to N$
gives the identity transformation on $M$.

The following theorem is also due to Terng-Thorbergsson \cite{TeTh95}.

\begin{thm}[{cf. \cite[Theorem 1.6, (b)]{TeTh95}}]\label{thm_TeTh95_thm1.6.b}
There exist a real number $\theta_{F}$
and integers $2g_{F}$, $m^{F}_{1}$,
$m^{F}_{2}$ satisfying the following conditions:
\begin{enumerate}
\item[(i)] $0<\theta_{F}<\dfrac{l}{2g_{F}}$.
\item[(ii)] The set $\mathcal{F}_{x}$
of focal normals of $M$ at $x$ is expressed as follows:
\begin{equation}
\mathcal{F}_{x}=\{t_{j,k}\cdot\bs{\xi}_{x}\mid
j\in\{1,\dotsc,2g_{F}\},k\in\mathbb{Z}
\},
\end{equation}
where $t_{j,k}\in\mathbb{R}$ is defined by
\begin{equation}\label{eqn:tjk_dfn}
t_{j,k}:=\theta_{F}+(j-1)\dfrac{l}{2g_{F}}+l\cdot k.
\end{equation}
Furthermore,
for each $j\in\{1,\dotsc,2g_{F}\}$
and $k\in\mathbb{Z}$,
the multiplicity of 
$t_{j,k}\cdot\bs{\xi}_{x}$ satisfies the following relation:
\begin{equation}\label{eqn:multF_inv}
m^{F}_{t_{j,k}\cdot\bs{\xi}_{x}}=m^{F}_{\text{$j$ ($\mathop{\mathrm{mod}}2$)}},
\end{equation}
where $m^{F}_{\text{$j$ ($\mathop{\mathrm{mod}}2$)}}$
means $m^{F}_{1}$ for odd $j$
and $m_{2}^{F}$ for even $j$.
\item[(iii)] The group generated by affine reflections 
with respect to focal normals at $x$
on $T^{\perp}_{x}M$
is isomorphic to the infinite dihedral group.
\end{enumerate}
\end{thm}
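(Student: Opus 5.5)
The plan is to reduce everything to the single normal circle $T_{x}$ and to exploit a reflection symmetry of the parallel family about each focal submanifold. Throughout, parametrize $T_{x}$ by $t\mapsto\gamma_{\bs{\xi}_{x}}(t)$ and lift it to $\mathbb{R}$, so that $t$ and $t+l$ give the same point. By (iii) of Definition \ref{dfn:def_equifocal}, the set $S:=\{t\in\mathbb{R}\mid t\bs{\xi}_{x}\in\mathcal{F}_{x}\}$ and the multiplicity function $t\mapsto m^{F}_{t\bs{\xi}_{x}}$ do not depend on $x$. The argument then has four steps.

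\textbf{Step 1: $S$ is discrete, nonempty, and $l$-periodic.} Discreteness follows because the focal times along a geodesic are the zeros of $\det$ of a real-analytic Jacobi matrix (an $M$-Jacobi field solution), and this determinant is not identically zero since $t=0$ is regular. Periodicity holds because $\eta_{l\bs{\xi}}=\id_{M}$. For nonemptiness I would argue as follows. $M$ is compact and embedded, and $N$ is simply connected, so $M$ is two-sided and separates $N$. The function $x\mapsto\sup\{s>0 \mid \eta_{t\bs{\xi}}$ has full rank for $0\le t<s\}$ is finite: otherwise all $M_{t\bs{\xi}}$ would be parallel hypersurfaces sweeping out $N$ with no focal set. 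A compact closed hypersurface foliation of all of $N$ by level sets without singular level is impossible, because the signed distance function on compact $N$ must attain a maximum, and that maximum is a focal point.

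\textbf{Step 2: reflection invariance.} Let $t_{0}\in S$ and let $F:=M_{t_{0}\bs{\xi}}$ be the corresponding focal submanifold. The key claim is that for small $s$ the parallel sets $\eta_{(t_{0}+s)\bs{\xi}}(M)$ and $\eta_{(t_{0}-s)\bs{\xi}}(M)$ both coincide with the tube of radius $|s|$ around $F$. By equifocality, the normal geodesics of $M$ meet $F$ orthogonally, and the fibres of $\eta_{t_{0}\bs{\xi}}$ are round spheres in the slice through the focal point. Hence the geodesic symmetry of $T_{x}$ at $\gamma(t_{0})$ carries the normal-circle structure to itself. Consequently $S$ is invariant under the reflection $r_{t_{0}}:t\mapsto 2t_{0}-t$, and the multiplicities are preserved, since they are computed from the parallel hypersurface, which is the same on both sides. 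This is the content of (iii): the group $W$ generated by the $r_{t_{0}}$, $t_{0}\in S$, preserves $S$.

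\textbf{Step 3: identify $W$ and read off (i), (ii).} Because $S$ is discrete and $W$-invariant, $W$ is a discrete group of affine isometries of $\mathbb{R}$ generated by reflections. It is therefore either finite of order two or infinite dihedral. It cannot be of order two: two distinct points of $S$ exist (for instance $t_{0}$ and $t_{0}+l$), and reflections in two distinct points generate a nontrivial translation. So $W\cong D_{\infty}$, generated by reflections in two adjacent points $t_{1}<t_{2}$ of $S$. Setting $d:=t_{2}-t_{1}$, one gets $S=t_{1}+d\mathbb{Z}$. The reflection $r_{t_{1}}$ interchanges $t_{1}\pm d$, and together with $r_{t_{2}}$ this shows that the multiplicities alternate between two values $m_{1}^{F},m_{2}^{F}$. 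Finally, $l$-periodicity of $S$ gives $l=nd$. I expect the main obstacle to be showing that $n$ is even, since this is exactly what is needed so that the count over one circle is $2g_{F}$ with the labelling \eqref{eqn:multF_inv} consistent. If $m_{1}^{F}\neq m_{2}^{F}$, evenness is immediate because translation by $l$ must preserve multiplicities. In general I would argue via the two focal submanifolds $M_{t_{1}\bs{\xi}}$ and $M_{t_{2}\bs{\xi}}$. The normal circle through a focal point of a given focal submanifold meets the $W$-orbit of $t_{1}$ and the $W$-orbit of $t_{2}$ in equally many points, by the symmetry $r$ at that focal point. Since these two orbits are $t_{1}+2d\mathbb{Z}$ and $t_{2}+2d\mathbb{Z}$, $n$ is even. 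Writing $n=2g_{F}$ and $\theta_{F}:=$ the smallest positive element of $S$ yields (i) and the formula \eqref{eqn:tjk_dfn}.
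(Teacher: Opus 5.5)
The paper does not prove this statement; it quotes it from \cite{TeTh95}. Most of your outline is a reasonable self-contained finite-dimensional route to it, but the evenness step has a genuine gap.

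\textbf{Steps 1 and 2.} The nonemptiness argument in Step~1 is loose, but it is easily repaired: if $S=\emptyset$, then $(y,t)\mapsto\gamma_{\bs{\xi}_{y}}(t)$ would be a covering $M\times\mathbb{R}/l\mathbb{Z}\to N$, which is impossible since $N$ is simply connected. Step~2 is only sketched, but it can be made rigorous. The component $C$ through $x$ of $\eta_{t_{0}\bs{\xi}}^{-1}(p)$ is a compact $m$-manifold. The map $y\mapsto\gamma'_{\bs{\xi}_{y}}(t_{0})$ is an injective immersion of $C$ into the unit sphere of the $(m+1)$-dimensional normal space of $F$ at $p$, hence onto it. This gives $x':=\gamma_{\bs{\xi}_{x}}(2t_{0})\in M$ with $\bs{\xi}_{x'}=-\gamma'_{\bs{\xi}_{x}}(2t_{0})$. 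Comparing $M$-Jacobi fields along the common geodesic then yields $S=2t_{0}-S$, with multiplicities preserved.

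\textbf{The gap: evenness of $n$.} You correctly flag this as the main obstacle, but your counting argument is vacuous. If $n$ is odd, then $t_{2}=t_{1}+d\equiv t_{1}-(n-1)d \pmod{l}$. So $t_{1}+2d\mathbb{Z}$ and $t_{2}+2d\mathbb{Z}$ have the same image in $\mathbb{R}/l\mathbb{Z}$. The focal submanifolds $M_{t_{1}\bs{\xi}}$ and $M_{t_{2}\bs{\xi}}$ then coincide, each orbit meets the circle in all $n$ focal points, and ``equally many'' holds trivially. Moreover, the reflections $r_{s}$, $s\in S$, preserve each $W$-orbit, so they could never pair one orbit with the other. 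Your final sentence tacitly assumes the two orbits stay disjoint modulo $l$, which is equivalent to $n$ being even.

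\textbf{Why a global input is needed.} Take the distance spheres of radius $r\in(0,\pi/2)$ about a point of $\mathbb{RP}^{m}$. They satisfy Definition~\ref{dfn:def_equifocal} and everything in your Steps~1--2. Their normal circles have length $l=\pi$ and $S=-r+\pi\mathbb{Z}$, so $n=1$. Your evenness argument never uses that $N$ is simply connected, so it would apply verbatim to this example.

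\textbf{What is actually needed.} Suppose $n$ were odd. Put $c:=t_{1}+d/2\notin S$ and $a:=t_{1}-\frac{n-1}{2}d\in S$. Step~2 at $a$ gives $x^{*}:=\gamma_{\bs{\xi}_{x}}(2a)\in M$ with
\[
\gamma_{\bs{\xi}_{x^{*}}}(t)=\gamma_{\bs{\xi}_{x}}(2a-t)=\gamma_{\bs{\xi}_{x}}(2c-t),
\]
since $2c-2a=l$. Hence $\eta_{c\bs{\xi}}(x^{*})=\eta_{c\bs{\xi}}(x)$ with $x^{*}\neq x$, and the two normal geodesics pass through this point with opposite velocities. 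So on the regular parallel hypersurface $M_{c\bs{\xi}}$ the map $\eta_{c\bs{\xi}}$ is not injective and reverses the normal. In the example this is exactly $S^{m-1}\to\mathbb{RP}^{m-1}$. Ruling this out requires a global consequence of simple connectivity. One option is the fact recalled in Section~\ref{sec:pre} that $\eta_{t\bs{\xi}}$ maps $M$ diffeomorphically onto every parallel hypersurface; it fails in the example precisely at $t=c$. Your proposal contains no such step, so the conclusion $l=2g_{F}\cdot d$ is not established.
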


The group stated in Theorem \ref{thm_TeTh95_thm1.6.b}, (3)
is called the \textit{affine Coxeter group} of $M$ at $x$,
which we write $W_{x}$.
It follows from Theorem \ref{thm_TeTh95_thm1.6.b}, (2)
that $W_{x}$ preserves $\mathcal{F}_{x}$ invariantly.
Then the orbit space
$\mathcal{F}_{x}/W_{x}$ is given by $\{t_{1,0}\cdot\bs{\xi}_{x}(=\theta_{F}\cdot\bs{\xi}_{x}), t_{2,0}\cdot\bs{\xi}_{x}\}$.
In other words,
\eqref{eqn:multF_inv}
asserts that
the multiplicity $m^{F}_{\bs{v}}$
($\bs{v}\in\mathcal{F}_{x}$) is invariant under the action of $W_{x}$.
By taking these facts into account,
we define the \emph{tangential focal data} $\Gamma^{F}(M,x)$ of $M$ at $x$ by
\begin{equation}\label{dfn:tangentialfocaldata}
\Gamma^{F}(M)=\Gamma^{F}(M,x)=\left\{\theta_{F},\dfrac{l}{2g_{F}},\{m^{F}_{1},m^{F}_{2}\};2g_{F}\right\}.
\end{equation}

\begin{rem}\label{rem:sphere_F_A}
Let us consider the case when $M$ is an isoparametric hypersurface in the $(n+1)$-sphere $S^{n+1}$.
Then the tangential focal data of $M$ coincides with the principal curvature data.
Indeed, $g_{F}$ is equal to the number of distinct principal curvatures, and
$m_{1}^{F}$ and $m_{2}^{F}$ are precisely the two possible multiplicities of the principal curvatures.
\end{rem}

\subsection{Parallel hypersurfaces}\label{sec:parallel_FRA}

Let $N$ be a simply-connected irreducible symmetric space of compact type.
Let $M$ be an equifocal hypersurface in $N$
with unit normal vector field $\bs{\xi}$.
The following is known by \cite[Theorem 1.6, (e, f, g)]{TeTh95}:
\begin{itemize}
\item $M_{t\cdot\bs{\xi}}$
is an equifocal hypersurface
for $t\in(\theta_{F}-l/(2g_{F}),\theta_{F})$.
\item $\{M_{t\cdot\bs{\xi}}\mid
t\in[\theta_{F}-l/(2g_{F}),\theta_{F}]\}$
gives a singular Riemannian foliation on $G/K$.
\end{itemize}
Fix $x\in M$
and let $\gamma_{\bs{\xi}_{x}}$
denote the normal geodesic of $M$
satisfying 
$\gamma_{\bs{\xi}_{x}}(0)=x$
and $\gamma_{\bs{\xi}_{x}}'(0)=\bs{\xi}_{x}$.
We define the subset $\widetilde{C}$ of $T^{\perp}_{x}M$ as follows:
\begin{equation}\label{eqn:Coxeter_domain}
\widetilde{C}:=\left\{t\cdot\bs{\xi}_{x}\,\middle|\, \theta_{F}-\dfrac{l}{2g_{F}}<t<\theta_{F}\right\},
\end{equation}
which is a fundamental domain
of the affine Coxeter group
of $M$ at $x$.
Fix $t\in (\theta_{F}-l/(2g),\theta_{F})$.
Let $y:=\eta_{t\bs{\xi}}(x)$ be a point of $M_{t\bs{\xi}}$.
Then $\gamma_{\bs{\xi}_{x}}'(t)$
is a unit normal vector of $M_{t\bs{\xi}}$
at $y$.
Let $\bs{\xi}^{t}$ be the parallel normal vector field
of $M_{t\bs{\xi}}$ with $\bs{\xi}^{t}_{x}=\gamma_{\bs{\xi}_{x}}'(t)$.
Clearly, we have the following lemma.

\begin{lem}
Let $t\in(\theta_{F}-l/(2g_{F}),\theta_{F})$.
The tangential focal data
$\Gamma^{F}(M_{t\bs{\xi}})$
is expressed as follows:
\begin{equation}
\Gamma^{F}(M_{t\bs{\xi}})
=\left\{
\theta_{F}-t,\dfrac{l}{2g_{F}},\{m^{F}_{1},m^{F}_{2}\};2g_{F}\right\}.
\end{equation}
\end{lem}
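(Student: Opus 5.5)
The plan is to identify the focal normals of $M_{t\bs{\xi}}$ at $y=\eta_{t\bs{\xi}}(x)$ with those of $M$ at $x$, shifted by $t$ along the common normal geodesic. Then we read off the data in \eqref{dfn:tangentialfocaldata}.

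\textbf{Step 1 (normal geodesics are shared).} Since $\eta_{t\bs{\xi}}$ is a diffeomorphism onto $M_{t\bs{\xi}}$ for $t\in(\theta_F-l/(2g_F),\theta_F)$, the parallel hypersurface has unit normal $\bs{\xi}^t_y=\gamma'_{\bs{\xi}_x}(t)$. The normal geodesic of $M_{t\bs{\xi}}$ through $y$ in direction $\bs{\xi}^t_y$ is therefore $s\mapsto\gamma_{\bs{\xi}_x}(s+t)$. Hence for every $z\in M$,
\begin{equation}
\exp^{\perp}(s\bs{\xi}^t_{\eta_{t\bs{\xi}}(z)})=\exp^{\perp}((s+t)\bs{\xi}_z).
\end{equation}
Equivalently, $\eta^{M_{t\bs{\xi}}}_{s\bs{\xi}^t}\circ\eta_{t\bs{\xi}}=\eta_{(s+t)\bs{\xi}}$ as maps $M\to N$. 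This uses the standard fact, valid in any Riemannian manifold by the Gauss lemma, that a geodesic meeting $M$ orthogonally also meets its (regular) parallel hypersurfaces orthogonally.

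\textbf{Step 2 (focal normals and multiplicities).} Because $\eta_{t\bs{\xi}}$ is a diffeomorphism, the rank of the endpoint map of $M_{t\bs{\xi}}$ for $s\bs{\xi}^t$ equals the rank of $\eta_{(s+t)\bs{\xi}}$. For hypersurfaces, the multiplicity $m^F$ of $s\bs{\xi}_x$ is exactly the dimension of the kernel of $d\eta_{s\bs{\xi}}$ at $x$. It follows that $s\bs{\xi}^t_y$ is focal for $M_{t\bs{\xi}}$ if and only if $(s+t)\bs{\xi}_x$ is focal for $M$, with the same multiplicity. By Theorem \ref{thm_TeTh95_thm1.6.b}, the focal radii of $M_{t\bs{\xi}}$ are therefore
\begin{equation}
t_{j,k}-t=(\theta_F-t)+(j-1)\tfrac{l}{2g_F}+lk,
\end{equation}
with multiplicities $m^F_{j\,(\mathrm{mod}\,2)}$.

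\textbf{Step 3 (reading off the data).} The normal circle through $y$ is the same circle $T_x$, so its length is again $l$. The number $2g_F$ is the number of focal radii per period $l$, which is unchanged by the shift. We also have $0<\theta_F-t<l/(2g_F)$, which is exactly condition (i) of Theorem \ref{thm_TeTh95_thm1.6.b}. So the first focal radius is $\theta_F-t$, and the multiplicity labelling $m_1^F,m_2^F$ is preserved because $j=1$ still corresponds to the first positive focal radius.

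\textbf{Main obstacle.} The only delicate point is Step 2: justifying that the multiplicity defined via $d\psi(\Ker d\exp^\perp)$ coincides with $\dim\Ker d\eta$, and that composing with the diffeomorphism $\eta_{t\bs{\xi}}$ does not change it. For a hypersurface with globally flat normal bundle this is direct. The kernel of $d\exp^\perp$ at $s\bs{\xi}_x$ projects isomorphically onto $\Ker(d\eta_{s\bs{\xi}})_x$, because the vertical direction $\bs{\xi}_x$ is mapped to the nonzero vector $\gamma'(s)$.
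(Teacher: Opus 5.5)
Your proposal is correct and follows exactly the reasoning the paper leaves implicit; the paper states the lemma with ``Clearly'' and gives no proof. The normal geodesics of $M$ and $M_{t\bs{\xi}}$ agree up to a parameter shift by $t$, so the focal radii shift by $t$, and the multiplicities are preserved because $\eta_{t\bs{\xi}}$ is a diffeomorphism. The one detail worth stating explicitly is in your identification of $d\psi(\Ker d\exp^{\perp})$ with $\Ker(d\eta_{s\bs{\xi}})_x$: the vertical component of a kernel vector vanishes because $d\eta_{s\bs{\xi}}(X)=J_X(s)$ stays orthogonal to $\gamma'_{\bs{\xi}_x}(s)$ for all $s$, which is the Gauss-lemma fact you already invoke in Step 1.
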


\section{Liu-Terng-type formulas for equifocal hypersurfaces}\label{sec:FRA}

The formulas for the mean curvature and the squared norm of the shape operator of an isoparametric hypersurface in a sphere were obtained by Liu-Terng \cite[(4.4) and (4.15)]{LiuTe20}.
In this section, we first extend their formulas to the mean curvature and the squared norm of the shape operator of
an equifocal hypersurface
in a simply-connected irreducible symmetric space of compact type (Theorem \ref{thm:equifocal_HA2_formula}).
Our formulas are described in terms of the tangential focal data of the hypersurfaces.
Next, we apply these formulas to their parallel hypersurfaces.

\subsection{Liu-Terng type formulas for equifocal hypersurfaces}

Let $N$ be a simply-connected irreducible symmetric space of compact type
and $\kappa$ be the Einstein constant of $N$.
Let $M$ be an equifocal hypersurface in $N$ with tangential focal data
$\Gamma^{F}(M)=\{\theta_{F},l/2g_{F},\{m_{1}^{F},m_{2}^{F}\};2g_{F}\}$.

\begin{notation}
We define the three constants $\delta_{F}$, $\bar{\theta}_{F}$ and $\epsilon_{F}$ as follows:
\begin{equation}
\delta_{F}:=\dfrac{m_{1}^{F}-m_{2}^{F}}{m_{1}^{F}+m_{2}^{F}},
\quad
\bar{\theta}_{F}:=\pi\dfrac{2g_{F}}{l}\cdot \theta_{F},
\quad
\epsilon_{F}:=\dfrac{1}{\pi^{2}}\left(\dfrac{l}{2g_{F}}\right)^{2}\dfrac{1}{m_{1}^{F}+m_{2}^{F}}.
\end{equation}
The subscript $F$ of these notations is used to indicate that they are determined by $\Gamma^{F}(M)$.
By the definition of $\delta_{F}$, we have $-1<\delta_{F}<1$ and $\epsilon_{F}>0$ (see \eqref{eqn:epF_range} below).
It follows from Theorem \ref{thm_TeTh95_thm1.6.b}, (i) that $0<\bar{\theta}_{F}<\pi$ holds.
\end{notation}

\begin{thm}\label{thm:equifocal_HA2_formula}
Let $N$ be a simply-connected irreducible symmetric space of compact type.
Let $M$ be an equifocal hypersurface of $N$
and $\bs{\xi}$ a unit normal vector field on $M$.
Then we have the following assertions.
\begin{enumerate}
\item[(i)] $\displaystyle{H=\dfrac{\pi g_{F}(m_{1}^{F}+m_{2}^{F})}{l\cdot \sin\bar{\theta}_{F}}(\delta_{F}+\cos\bar{\theta}_{F})}$.
\item[(ii)] The following equality holds:
\begin{equation}\label{eqn:gLTform_A2_Ric}
\|A\|^{2}
=\dfrac{1}{2\epsilon_{F}\cdot\sin^{2}\bar{\theta}_{F}}(1+\delta_{F}\cos\bar{\theta}_{F})-\kappa.
\end{equation}
\end{enumerate}
\end{thm}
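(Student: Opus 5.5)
The plan is to express $H$ and $\|A\|^{2}$ through the Jacobi-field description of the shape operators of the parallel hypersurfaces, and then to use the focal data of Theorem \ref{thm_TeTh95_thm1.6.b} to evaluate the resulting sums.

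\textbf{Step 1: eigenvalues of $A$.} Since $M$ is equifocal in a symmetric space, it is curvature-adapted: $R(\cdot,\bs{\xi})\bs{\xi}$ preserves $T_{x}M$ and commutes with $A_{\bs{\xi}}$. Hence $T_{x}M$ splits into common eigenspaces. On such an eigenspace, $R(\cdot,\bs{\xi})\bs{\xi}$ acts by $\beta^{2}\geq0$ and $A$ acts by $\lambda$. The $M$-Jacobi field along $\gamma_{\bs{\xi}_{x}}$ is then $\cos(\beta t)-\frac{\lambda}{\beta}\sin(\beta t)$ times a parallel field (linear in $t$ if $\beta=0$). It vanishes exactly at $t\in\frac{1}{\beta}(\operatorname{arccot}(\lambda/\beta)+\pi\mathbb{Z})$. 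This common-eigenspace splitting is the standard one from \cite{TeTh95}, which I would assume.

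\textbf{Step 2: the mean curvature.} Set $\psi(t)=\prod_{i}(\cos\beta_{i}t-\frac{\lambda_{i}}{\beta_{i}}\sin\beta_{i}t)$ over eigenvalues counted with multiplicity, so that $H=-\psi'(0)=\sum\lambda_{i}$. The zeros of $\psi$, with multiplicity, are exactly the focal radii $t_{j,k}$, carrying multiplicities $m^{F}_{j\,(\mathrm{mod}\,2)}$. Each factor is a sine with period $\pi/\beta_{i}$ whose zeros lie inside the periodic set $\{t_{j,k}\}$ of period $l/(2g_{F})$ with alternating multiplicities. I would therefore identify $\psi$, up to a constant, with
\[
\sin^{m_{1}^{F}}\!\Bigl(\tfrac{\pi}{L}(\theta_{F}-t)\Bigr)\cos^{m_{2}^{F}}\!\Bigl(\tfrac{\pi}{L}(\theta_{F}-t)\Bigr)\ \text{(rescaled)},
\]
where the odd-indexed focal points are $\theta_{F}+2kL'$ with $L'=l/(2g_{F})$ and the even-indexed ones are shifted by $L'$. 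Concretely, the odd zeros are the zeros of $\sin\bigl(\frac{\pi}{2L'}(t-\theta_{F})\bigr)$ and the even zeros are the zeros of $\cos\bigl(\frac{\pi}{2L'}(t-\theta_{F})\bigr)$. Using the logarithmic derivative,
\[
H=\frac{\pi}{2L'}\bigl(m_{1}^{F}\cot(\bar\theta_{F}/2)-m_{2}^{F}\tan(\bar\theta_{F}/2)\bigr),
\]
and the half-angle identities turn this into (i).

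The main obstacle is making the identification of $\sum_i\lambda_i$ with the logarithmic derivative of this model function rigorous. One concern is whether the zeros of the individual factors could fail to be simple with respect to $\psi$. The other is whether the frequencies $\beta_{i}$ genuinely match $\pi/(2L')$ or a multiple of it, so that no extra zeros appear. I would resolve this by taking the Mittag-Leffler sum $\lambda=\sum_{k}\frac{1}{t_{k}}$ over the zeros of each factor, with symmetric summation and the $\beta$-dependent correction. Summing over all eigenspaces then gives $\sum_{j,k}m_{j}\frac{1}{t_{j,k}}$, which matches the model function's logarithmic derivative; this yields (i) independently of the $\beta_{i}$.

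\textbf{Step 3: $\|A\|^{2}$.} Differentiate once more. On each eigenspace,
\[
\frac{d}{dt}\frac{J'}{J}\Big|_{0}=-\beta^{2}-\lambda^{2}.
\]
Hence $-(\log\psi)''(0)=\sum_{i}(\lambda_{i}^{2}+\beta_{i}^{2})=\|A\|^{2}+\operatorname{Ric}(\bs{\xi},\bs{\xi})=\|A\|^{2}+\kappa$. For the model function,
\[
-(\log\psi)''(0)=\Bigl(\frac{\pi}{2L'}\Bigr)^{2}\Bigl(\frac{m_{1}^{F}}{\sin^{2}(\bar\theta_{F}/2)}+\frac{m_{2}^{F}}{\cos^{2}(\bar\theta_{F}/2)}\Bigr),
\]
which by the same half-angle identities equals $\frac{1}{2\epsilon_{F}\sin^{2}\bar\theta_{F}}(1+\delta_{F}\cos\bar\theta_{F})$. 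This gives (ii). The second-derivative version of the Mittag-Leffler identity, $\sum 1/t_{k}^{2}$, converges absolutely, so it is less delicate than Step 2.
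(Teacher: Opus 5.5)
\textbf{There is a genuine gap in Step~1.} You assume that an equifocal hypersurface is curvature-adapted, i.e.\ that $A_{\bs{\xi}}$ commutes with $R_{\bs{\xi}}=R(\cdot,\bs{\xi})\bs{\xi}$ on $T_{x}M$. This does not follow from equifocality and is false in general.

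Take $N=\mathbb{C}P^{n}$, which the theorem covers. There $R_{\bs{\xi}}=\id+3\INN{\cdot}{J\bs{\xi}}J\bs{\xi}$ on $T_{x}M$, so curvature-adapted means Hopf. As far as I know, there are equifocal hypersurfaces of $\mathbb{C}P^{n}$ that are not Hopf, e.g.\ the inhomogeneous isoparametric examples of Dom\'{\i}nguez-V\'{a}zquez. Their preimages under the Hopf fibration are isoparametric in $S^{2n+1}$, so their focal data are constant. They cannot be Hopf: a Hopf hypersurface with constant focal data has constant principal curvatures, hence is homogeneous by Kimura's classification.

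For such $M$ the Jacobi tensor $D(t)=\cos(t\sqrt{R_{\bs{\xi}}})-\frac{\sin(t\sqrt{R_{\bs{\xi}}})}{\sqrt{R_{\bs{\xi}}}}A_{\bs{\xi}}$ does not decouple, and $\psi=\det D$ is not a product of shifted sines. So neither the per-factor Mittag-Leffler sums of Step~2 nor the per-eigenspace identity of Step~3 is available. For curvature-adapted $M$ your argument is correct, but that is a strictly smaller class.

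Your strategy can be rescued, but only with an idea the proposal lacks. The identities $H=-(\log\psi)'(0)$ and $\|A\|^{2}+\operatorname{Ric}^{N}(\bs{\xi},\bs{\xi})=-(\log\psi)''(0)$ hold for $\psi=\det D$ in general, using $D(0)=\id$, $D'(0)=-A_{\bs{\xi}}$, $D''(0)=-R_{\bs{\xi}}$. Also, the real zeros of $\psi$ are the $t_{j,k}$ with orders $m^{F}_{j\,(\mathrm{mod}\,2)}$.

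What the product structure gave you for free, and what must now be proved, is that $\psi$ has no non-real zeros. Suppose $D(t_{0})v=0$ with $v\neq0$. Then $Y(s):=D(st_{0})v$ satisfies $Y''=-t_{0}^{2}R_{\bs{\xi}}Y$, $Y(0)=v$, $Y'(0)=-t_{0}A_{\bs{\xi}}v$ and $Y(1)=0$. Pairing with $\bar{Y}$ and integrating by parts gives $\alpha t_{0}^{2}+\gamma t_{0}-\beta=0$ with $\alpha\geq0$, $\gamma\in\mathbb{R}$, and $\beta>0$ since $Y$ is nonconstant. This forces $t_{0}\in\mathbb{R}$.

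Consequently $\psi/\mu$, with $\mu$ your model function, is entire, zero-free and of order at most one, hence equal to $e^{a+bt}$ with $b$ real. Since $\mu$ is $(l/g_{F})$-periodic up to sign while $|\psi(t)|\leq(1+|t|\,\|A\|)^{\dim M}$ on $\mathbb{R}$, we get $b=0$. So $\psi$ is a constant multiple of $\mu$, and your logarithmic-derivative computations then finish the proof.

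The paper sidesteps the issue by lifting to $\widetilde{M}=\Phi^{-1}(M)\subset H^{0}([0,1],\mathfrak{g})$. Because the ambient space is flat, the nonzero eigenvalues of $\widetilde{A}_{\bs{\xi}^{h}_{u}}$ are exactly the $t_{j,k}^{-1}$ with multiplicities $m^{F}_{j\,(\mathrm{mod}\,2)}$, whether or not $M$ is curvature-adapted. Part (i) then follows from the Heintze--Liu--Olmos identity $H=\tr_{r}\widetilde{A}_{\bs{\xi}^{h}_{u}}$. Part (ii) follows by differentiating that identity along the parallel family, which gives $\tr\widetilde{A}^{2}_{\bs{\xi}^{h}_{u}}=\|A\|^{2}+\operatorname{Ric}^{N}(\bs{\xi},\bs{\xi})$.
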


The proof of Theorem \ref{thm:equifocal_HA2_formula}
is given by reducing an equifocal hypersurface
or an equifocal hypersurface
to an infinite-dimensional isoparametric hypersurface in a certain separable Hilbert space.
As preparation for this reduction, in Subsection \ref{sec:infty_isopara},
we review basics on infinite-dimensional isoparametric submanifolds
and explain the relationship between equifocal submanifolds and infinite-dimensional isoparametric submanifolds.
The proof of Theorem \ref{thm:equifocal_HA2_formula} is given in Subsection \ref{sec:proof_LTtypeformula}.

The following corollaries follows immediately from Theorem \ref{thm:equifocal_HA2_formula}.

\begin{cor}\label{cor:equifocal_minimal}
An equifocal hypersurface $M$ with tangential focal data $\Gamma^{F}(M)$
is minimal if and only in the following condition holds:
\begin{equation}
\delta_{F}+\cos\bar{\theta}_{F}=0.
\end{equation}
\end{cor}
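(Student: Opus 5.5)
The corollary follows directly from part (i) of Theorem~\ref{thm:equifocal_HA2_formula}. By definition, $M$ is minimal exactly when its mean curvature $H$ vanishes identically. Part (i) gives
\begin{equation}
H=\dfrac{\pi g_{F}(m_{1}^{F}+m_{2}^{F})}{l\cdot\sin\bar{\theta}_{F}}\,(\delta_{F}+\cos\bar{\theta}_{F}),
\end{equation}
and every quantity in this expression is determined by $\Gamma^{F}(M)$. By Definition~\ref{dfn:def_equifocal}, (iii), $\Gamma^{F}(M)$ is invariant under parallel translation, so $H$ is a constant on $M$. Minimality is therefore equivalent to the vanishing of this single constant.

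It remains to check that the prefactor is nonzero and finite, so that $H=0$ holds if and only if $\delta_{F}+\cos\bar{\theta}_{F}=0$. The integer $g_{F}$ is positive, and $m_{1}^{F}+m_{2}^{F}>0$ because multiplicities of focal normals are positive integers. The length $l$ of the normal circle is positive and finite. Finally, Theorem~\ref{thm_TeTh95_thm1.6.b}, (i) gives $0<\theta_{F}<l/(2g_{F})$, which means $0<\bar{\theta}_{F}<\pi$ and hence $\sin\bar{\theta}_{F}>0$. The prefactor is therefore a well-defined positive real number, and cancelling it yields the claimed equivalence in both directions.

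The only point that needs care is the sign convention for $H$, that is, the choice of the unit normal $\bs{\xi}$. Changing $\bs{\xi}$ to $-\bs{\xi}$ replaces $\theta_{F}$ by $l/(2g_{F})-\theta_{F}$ and interchanges $m_{1}^{F}$ and $m_{2}^{F}$. This sends $\bar{\theta}_{F}\mapsto\pi-\bar{\theta}_{F}$ and $\delta_{F}\mapsto-\delta_{F}$, so $\delta_{F}+\cos\bar{\theta}_{F}$ only changes sign. The criterion is thus independent of the choice of normal, as it should be.
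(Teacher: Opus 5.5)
Your proposal is correct and takes the same route as the paper, which states that the corollary follows immediately from part (i) of Theorem~\ref{thm:equifocal_HA2_formula}. You additionally make explicit that the prefactor is positive, via $0<\bar{\theta}_{F}<\pi$, which the paper leaves implicit; your check that replacing $\bs{\xi}$ by $-\bs{\xi}$ only flips the sign of $\delta_{F}+\cos\bar{\theta}_{F}$ is also correct, though not needed.
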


By a direct calculation,
we also obtain
the following equalities as a generalization of the formula obtained in \cite[Lemma 4.11]{LiuTe20}.

\begin{cor}
For any constant $c$, the following equality holds:
\begin{multline}
\|A\|^{2}-\dfrac{c}{m_{1}^{F}+m^{2}_{F}}H^{2}
=\\
\dfrac{1}{4\epsilon_{F}\sin^{2}\bar{\theta}_{F}}\{(2-c-c^{2}\delta_{F})+2\delta_{F}(1-c)\cos\bar{\theta}_{F}+(c-4\kappa\epsilon_{F})\sin^{2}\bar{\theta}_{F}\}.
\end{multline}
In particular,
in the case of $\delta_{F}=0$,
we have a simpler form:
\begin{equation}\label{eqn:A-2mmH_ep}
\|A\|^{2}-\dfrac{2}{m_{1}^{F}+m_{2}^{F}}H^{2}\equiv\dfrac{1-2\kappa\epsilon_{F}}{2\epsilon_{F}}.
\end{equation}
\end{cor}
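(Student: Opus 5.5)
The plan is a direct substitution of the two formulas of Theorem \ref{thm:equifocal_HA2_formula} into the left-hand side. The only observation needed is that the prefactor of $H$ can be rewritten in terms of $\epsilon_{F}$.

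\textbf{Step 1 (express $H^{2}$ through $\epsilon_{F}$).} From the definition of $\epsilon_{F}$,
\begin{equation}
\dfrac{1}{4\epsilon_{F}}=\dfrac{\pi^{2}}{4}\left(\dfrac{2g_{F}}{l}\right)^{2}(m_{1}^{F}+m_{2}^{F})=\dfrac{\pi^{2}g_{F}^{2}(m_{1}^{F}+m_{2}^{F})}{l^{2}}.
\end{equation}
Squaring Theorem \ref{thm:equifocal_HA2_formula}, (i) therefore gives
\begin{equation}
\dfrac{H^{2}}{m_{1}^{F}+m_{2}^{F}}=\dfrac{(\delta_{F}+\cos\bar{\theta}_{F})^{2}}{4\epsilon_{F}\sin^{2}\bar{\theta}_{F}}.
\end{equation}

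\textbf{Step 2 (put $\|A\|^{2}$ over the same denominator).} Rewrite Theorem \ref{thm:equifocal_HA2_formula}, (ii) as
\begin{equation}
\|A\|^{2}=\dfrac{2+2\delta_{F}\cos\bar{\theta}_{F}-4\kappa\epsilon_{F}\sin^{2}\bar{\theta}_{F}}{4\epsilon_{F}\sin^{2}\bar{\theta}_{F}}.
\end{equation}

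\textbf{Step 3 (subtract and simplify).} Subtracting $c$ times the expression from Step 1, the numerator becomes
\begin{equation}
2+2\delta_{F}\cos\bar{\theta}_{F}-c\,(\delta_{F}^{2}+2\delta_{F}\cos\bar{\theta}_{F}+\cos^{2}\bar{\theta}_{F})-4\kappa\epsilon_{F}\sin^{2}\bar{\theta}_{F}.
\end{equation}
Replacing $\cos^{2}\bar{\theta}_{F}$ by $1-\sin^{2}\bar{\theta}_{F}$ and collecting terms, the numerator equals
\begin{equation}
(2-c-c\,\delta_{F}^{2})+2\delta_{F}(1-c)\cos\bar{\theta}_{F}+(c-4\kappa\epsilon_{F})\sin^{2}\bar{\theta}_{F}.
\end{equation}
This is the claimed identity. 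Two points should be flagged here. First, the computation produces $c\,\delta_{F}^{2}$ in the constant term, so the printed "$c^{2}\delta_{F}$" should presumably read $c\,\delta_{F}^{2}$. Second, $m^{2}_{F}$ on the left-hand side is meant to be $m_{2}^{F}$.

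\textbf{Step 4 (the case $\delta_{F}=0$).} Put $\delta_{F}=0$ and $c=2$. The first two terms of the numerator vanish, leaving $(2-4\kappa\epsilon_{F})\sin^{2}\bar{\theta}_{F}$. Since $0<\bar{\theta}_{F}<\pi$, we have $\sin\bar{\theta}_{F}\neq0$, so the $\sin^{2}\bar{\theta}_{F}$ cancels and the expression equals $(1-2\kappa\epsilon_{F})/(2\epsilon_{F})$. This is independent of $\theta_{F}$, which gives \eqref{eqn:A-2mmH_ep}.

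There is no genuine obstacle. The only step requiring care is the bookkeeping in Step 1, namely recognising $\pi^{2}g_{F}^{2}(m_{1}^{F}+m_{2}^{F})/l^{2}$ as $1/(4\epsilon_{F})$, so that both terms share the denominator $4\epsilon_{F}\sin^{2}\bar{\theta}_{F}$.
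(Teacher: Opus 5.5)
Your proof is correct and is exactly the direct substitution of Theorem \ref{thm:equifocal_HA2_formula} that the paper means by ``a direct calculation,'' including the key step of recognising $\pi^{2}g_{F}^{2}(m_{1}^{F}+m_{2}^{F})/l^{2}=1/(4\epsilon_{F})$. Your two corrections to the printed statement are also right: the constant term should be $(2-c-c\,\delta_{F}^{2})$ rather than $(2-c-c^{2}\delta_{F})$, and $m^{2}_{F}$ should be $m_{2}^{F}$. Neither affects the $\delta_{F}=0$ specialization.
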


\subsection{Relation between equifocal submanifolds and infinite-dimensional isoparametric submanifolds}\label{sec:infty_isopara}

\subsubsection{Proper Fredholm submanifolds}
We begin by recalling the notion of a proper Fredholm submanifold introduced by Terng \cite{Te89}.

\begin{dfn}\label{dfn:dfn_PF}
Let $(V,\INN{\cdot}{\cdot})$ be a separable Hilbert space.
An immersed Hilbert submanifold $\widetilde{M}$ in $V$ with finite codimension
is called a \textit{proper Fredholm submanifold},
if it satisfies the following conditions:
\begin{enumerate}
\item[(i)] The normal exponential map
$\widetilde{\exp}^{\perp}:T^{\perp}\widetilde{M}\to V$
of $\widetilde{M}$ is a Fredholm map.
\item[(ii)] For each $r>0$, the restriction of $\widetilde{\exp}^{\perp}$ to the normal disk bundle of radius $r$ is a proper map.
\end{enumerate}
\end{dfn}

In Definition \ref{dfn:dfn_PF},
the condition (i) is known to be equivalent to the assertion that
the shape operator of $\widetilde{M}$ in each normal direction is a compact operator.
In particular, since each shape operator is a self-adjoint compact operator,
its non-zero eigenvalues form an at most countable set, each with finite multiplicity,
and their only possible accumulation point is $0$.
On the other hand, the condition (ii) ensures that
the squared distance function determined by a point outside $\widetilde{M}$ satisfies the Palais-Smale condition.

\subsubsection{Infinite-dimensional isoparametric submanifolds}

Next, we recall the notion of an infinite-dimensional isoparametric submanifold,
which is defined as a subclass of proper Fredholm submanifolds.

\begin{dfn}\label{dfn:dfn_isopara_infty}
A proper Fredholm submanifold $\widetilde{M}$ in $V$ is called
an \textit{isoparametric submanifold} if it satisfies the following conditions:
\begin{enumerate}
\item[(i)] The normal holonomy group of $\widetilde{M}$ is trivial.
\item[(ii)] For any parallel unit normal vector field $\bs{v}$ on $\widetilde{M}$
and any two points $u,u'\in\widetilde{M}$,
the shape operators $\widetilde{A}_{\bs{v}_{u}}$ and $\widetilde{A}_{\bs{v}_{u'}}$ are orthogonally equivalent.
Namely, there exists a linear isometry
$\varphi:T_{u}\widetilde{M}\to T_{u'}\widetilde{M}$
satisfying
$
\varphi\circ \widetilde{A}_{\bs{v}_{u}}
=
\widetilde{A}_{\bs{v}_{u'}}\circ \varphi$.
\end{enumerate}
\end{dfn}

\begin{rem}
The condition (i) in Definition~\ref{dfn:dfn_isopara_infty} means that every normal vector at any point $u\in\widetilde{M}$ extends uniquely to a global parallel normal vector field on $\widetilde{M}$ with respect to the normal connection.
Therefore, the normal spaces at different points are uniquely identified by parallel translations with respect to the normal connection.
Under this identification, the condition (ii) means that, for every parallel unit normal vector field,
the spectrum of the corresponding shape operator coincides with considering the multiplicities.
\end{rem}

Let $\widetilde{M}$ be an isoparametric submanifold in $V$. 
Fix an arbitrary point $u\in\widetilde{M}$.
For a linear form $\lambda\in (T^{\perp}_{u}\widetilde{M})^{*}$,
we define the closed subspace
$\widetilde{E}_{\lambda}^{u}$ of $T_{u}\widetilde{M}$ by
\begin{equation}
\widetilde{E}_{\lambda}^{u}
:=
\bigcap_{\bs{v}\in T^{\perp}_{u}\widetilde{M}}
\operatorname{Ker}
(\widetilde{A}_{\bs{v}}
-\lambda(\bs{v})\id).
\end{equation}
By the condition (i) in Definition \ref{dfn:dfn_isopara_infty} and the Ricci equation,
the family of shape operators $\{\widetilde{A}_{\bs{v}}\mid \bs{v}\in T^{\perp}_{u}\widetilde{M}\}$
consists of mutually commuting self-adjoint compact operators.
By Palais-Terng \cite[Proposition 7.2.5]{PTe88},
there exists a common eigenspace decomposition
with respect to these shape operators.
Namely, there exist an at most countable index set $I$ and non-zero linear forms
$\lambda_{i}^{u}
\in (T^{\perp}_{u}\widetilde{M})^{*}$
($i\in I$)
such that $\widetilde{E}_{\lambda_{i}}^{u}\neq\{0\}$ and
\begin{equation}\label{eqn:TtildeM_decomp}
T_{u}\widetilde{M}
=
\overline{
\widetilde{E}_{0}^{u}
\oplus
\bigoplus_{i\in I}
\widetilde{E}_{\lambda_{i}}^{u}
}.
\end{equation}
Here, $\widetilde{E}_{0}^{u}$ is the simultaneous zero eigenspace given by
\begin{equation}
\widetilde{E}_{0}^{u}
=
\bigcap_{\bs{v}\in T^{\perp}_{u}\widetilde{M}}
\operatorname{Ker}\widetilde{A}_{\bs{v}}.
\end{equation}
We note that $\widetilde{E}_{\lambda_{i}}^{u}$ is of finite-dimensional for each $i\in I$.
Set $\widetilde{E}_{i}^{u}:=\widetilde{E}_{\lambda_{i}}^{u}$ for $i\in I$.
According to the condition (ii)
of Definition \ref{dfn:dfn_isopara_infty},
for each $i\in I\cup\{0\}$,
we may assume that the assignment
\begin{equation}
\widetilde{E}_{i}:u\mapsto \widetilde{E}_{i}^{u}
\end{equation}
defines a smooth distribution on $\widetilde{M}$.
These distributions are called the \textit{curvature distributions} of $\widetilde{M}$.
Since the normal holonomy of $T^{\perp}\widetilde{M}$ is trivial, the holonomy group of the dual connection
induced on the dual bundle $(T^{\perp}\widetilde{M})^{*}$ is also trivial.
For each $i\in I$,
we can show that
the section
$\lambda_{i}$ of $(T^{\perp}\widetilde{M})^{*}$ determined by $(\lambda_{i})_{u}:=\lambda_{i}^{u}$
is parallel with respect to this dual connection.
This section $\lambda_{i}$ is called a \textit{principal curvature} of $\widetilde{M}$.
The rank of $\widetilde{E}_{i}$ is called the \textit{multiplicity} of $\lambda_{i}$.
Since the normal bundle $T^{\perp}\widetilde{M}$ has finite rank,
the Riesz representation theorem implies that,
for each $i\in I$, there exists a unique normal vector field $\bs{n}_{i}$ satisfying
\begin{equation}
(\lambda_{i})_{u}(\bs{v})
=
\INN{(\bs{n}_{i})_{u}}{\bs{v}}
\quad
(u\in\widetilde{M},\ \bs{v}\in T^{\perp}_{u}\widetilde{M}).
\end{equation}
This normal vector field $\bs{n}_{i}$ is smooth and parallel, and is called the \textit{curvature normal}.

The normal exponential map
$\widetilde{\exp}^{\perp}:T^{\perp}\widetilde{M}\to V$
of $\widetilde{M}$ is given by
\begin{equation}
\widetilde{\exp}^{\perp}(\bs{v})=u+\bs{v}
\quad
(u\in \widetilde{M},\ \bs{v}\in T^{\perp}_{u}\widetilde{M}).
\end{equation}
We write $\tilde{\psi}:T^{\perp}\widetilde{M}\to \widetilde{M}$
as the natural projection.
For $\bs{v}\in T^{\perp}_{u}\widetilde{M}$,
if
$d\tilde{\psi}_{\bs{v}}(\operatorname{Ker}(d\widetilde{\exp}_{\bs{v}}))\neq\{0\}$
holds, then
$\bs{v}$ is called a \textit{focal normal} of $\widetilde{M}$ at $u$.
In this case, $\dim_{\mathbb{R}}
d\tilde{\psi}_{\bs{v}}(\operatorname{Ker}(d\widetilde{\exp}_{\bs{v}}))$
and $\|\bs{v}\|$ are called the \textit{multiplicity} and the
\textit{focal radius} of along the unit-speed normal geodesic $\gamma_{\bs{v}/\|\bs{v}\|}$, respectively.
We denote by $\mathcal{F}(\widetilde{M},u)$ the set of all focal normals
of $\widetilde{M}$ at $u$.
Let $\bs{v}$ be a parallel normal vector field on $\widetilde{M}$.
We define
$\tilde{\eta}_{\bs{v}}:\widetilde{M}\to V$
by $\tilde{\eta}_{\bs{v}}(u):=\widetilde{\exp}^{\perp}(\bs{v}_{u})$ ($u\in\widetilde{M}$).
This map is called the endpoint map of $\widetilde{M}$ for $\bs{v}$.
According to (ii) of Definition \ref{dfn:dfn_isopara_infty},
$\tilde{\eta}_{\bs{v}}$ is of constant rank.
Set $\widetilde{M}_{\bs{v}}:=\tilde{\eta}_{\bs{v}}(M)$.
If $\tilde{\eta}_{\bs{v}}$ is of full rank, then $\widetilde{M}$
is called a \emph{parallel submanifold of $\widetilde{M}$
for $\bs{v}$};
otherwise, $\widetilde{M}_{\bs{v}}$ is a \emph{focal submanifold of $\widetilde{M}$
for $\bs{v}$}.
When $\widetilde{M}_{\bs{v}}$ is a parallel submanifold of $\widetilde{M}$,
the map $\tilde{\eta}_{\bs{v}}$ is a diffeomorphism from $\widetilde{M}$
onto $\widetilde{M}_{\bs{v}}$.
Let $i\in I$
and $l_{i}^{u}:=(\lambda_{i})_{u}^{-1}(1)$ denote the affine hyperplane in
$T^{\perp}_{u}\widetilde{M}$.
Then it is shown that
\begin{equation}
\mathcal{F}(\widetilde{M},u)
=
\bigcup_{i\in I}l_{i}^{u}.
\end{equation}
The affine hyperplane $l_{i}^{u}$ is called a \textit{focal hyperplane} of $\widetilde{M}$ at $u$.

\subsubsection{Regularizable isoparametric submanifolds}

In general, the shape operators of an isoparametric submanifold
is not necessarily of trace class.
For this reason, we recall the notion of a regularizable submanifold in the sense of Heintze-Liu-Olmos \cite{HLO},
which are defined as a subclass of proper Fredholm submanifolds.

\begin{dfn}[{\cite{HLO}}]\label{dfn:A_regularizable}
Let $W$ be a separable Hilbert space.
A self-adjoint compact operator $\widetilde{A}:W\to W$
is said to be \textit{regularizable},
if it satisfies the following two conditions:
\begin{enumerate}
\item[(i)] $\tr\widetilde{A}^{2}<\infty$.
\item[(ii)] We write
$\lambda^{-}_{1}\leq \lambda^{-}_{2} \leq \dotsb < 0 < \dotsb \leq \lambda_{2}^{+}\leq \lambda_{1}^{+}$
as the eigenvalues of $A$ repeated with multiplicities.
Then the series
\begin{equation}\label{eqn:trr_tildeA_converge}
\sum_{k=1}^{\infty}
\left(\lambda^{+}_{k}+\lambda^{-}_{k}\right),
\end{equation}
where we regard $\lambda_{k}^{+}$ or $\lambda_{k}^{-}$ as $0$
if there are less than $k$ positive or negative eigenvalues, respectively.
\end{enumerate}
The series \eqref{eqn:trr_tildeA_converge} is called the \textit{regularized trace} of $\widetilde{A}$ and is denoted by $\operatorname{tr}_{r}\widetilde{A}$.
\end{dfn}

\begin{dfn}[{\cite{HLO}}]\label{dfn:regularizable_submanifold}
Let $(V,\INN{\cdot}{\cdot})$ be a separable Hilbert space.
A proper Fredholm submanifold $\widetilde{M}$ in $V$
is \textit{regularizable},
if, for any point $u\in\widetilde{M}$ and any normal vector $\bs{v}\in T^{\perp}_{u}\widetilde{M}$,
the shape operator $\widetilde{A}_{\bs{v}}$ is regularizable.
In this case, $\tr_{r}\widetilde{A}_{\bs{v}}$ is called the \textit{regularized mean curvature} of $\widetilde{M}$ in the direction
$\bs{v}$. Assume further that, for each $u\in\widetilde{M}$,
the function on $T^{\perp}_{u}\widetilde{M}$ defined by
$\bs{v}
\to \tr_{r}\widetilde{A}_{\bs{v}}$ is linear.
Then there exists a unique normal vector
$\widetilde{\bs{H}}_{u}\in T^{\perp}_{u}\widetilde{M}$ such that
\begin{equation}
\INN{\widetilde{\bs{H}}_{u}}{\bs{v}}
=
\mathrm{tr}_{r}\widetilde{A}_{\bs{v}}
\quad
(\bs{v}\in T^{\perp}_{u}\widetilde{M}).
\end{equation}
This vector $\widetilde{\bs{H}}_{u}$ is called the
\textit{regularized mean curvature vector} at $u\in\widetilde{M}$.
If $\widetilde{\bs{H}}_{u}$ is defined at every point
$u\in\widetilde{M}$, then the section
$u\to \widetilde{\bs{H}}_{u}$ of
$T^{\perp}\widetilde{M}$ is called the \textit{regularized mean curvature vector field} and is denoted by
$\widetilde{\bs{H}}$.
\end{dfn}

Let $\widetilde{M}$ be a regularizable isoparametric submanifold in $V$.
For any parallel unit normal vector field $\bs{v}$, the regularized trace
$\mathrm{tr}_{r}\widetilde{A}_{\bs{v}_{u}}$ is independent of the choice of $u\in\widetilde{M}$.
Hence, if the regularized mean curvature vector field $\widetilde{\bs{H}}$ is well-defined,
then $\widetilde{\bs{H}}$ is parallel.

\subsubsection{Relation between equifocal submanifolds and infinite-dimensional isoparametric submanifolds}

Let $N=G/K$ be a simply-connected symmetric space of compact type.
Let $\pi:G\to G/K$ be the natural projection.
Let $H^{0}([0,1],\mathfrak{g})$
be the separable Hilbert space which consists
of $L^{2}$-integrable paths $u:[0,1]\to\mathfrak{g}$
and $H^{1}([0,1],G)$
be the Hilbert Lie group which consists of $H^{1}$-paths
$g:[0,1]\to G$.
For a subset $U$ of $G\times G$,
we put
\begin{equation}
 P(G,U):=\{g\in H^{1}([0,1],G) \mid
((g(0),g(1)))\in U\}.
\end{equation}
We note that $P(G,\{e\}\times G)$
is a Hilbert subgroup of $H^{1}([0,1],G)$,
where $e$ denotes the identity element of $G$.
The \textit{parallel transport map}
$\phi:H^{0}([0,1],\mathfrak{g})\to G$
is defined by
\begin{equation}
 \phi(u):=E_{u}(1)\quad
(u\in H^{0}([0,1],\mathfrak{g})),
\end{equation}
where $E_{u}$ denotes the unique solution
of the linear ODE $E_{u}^{-1}E_{u}'=u$ with $E_{u}(0)=e$ (cf.~\cite[Section 4]{TeTh95}):
It follows from \cite[4.5 Theorem, (4)]{TeTh95}
that $\Phi:=\pi\circ\phi:H^{0}([0,1],\mathfrak{g})\to G/K$
is a Riemannian submersion with minimal fibers.

Let $M$ be an immersed compact submanifold in $G/K$.
We set $\widetilde{M}:=\Phi^{-1}(M)$ and call it the lift of $M$ by $\Phi$.
By using \cite[Lemma 5.8]{TeTh95}
it is shown that $\widetilde{M}$ is a proper Fredholm submanifold in $H^{0}([0,1],\mathfrak{g})$.
Let $x\in M$ and $u\in\widetilde{M}$ with $x=\Phi(u)$.
Then $d\Phi_{u}$ gives a linear isometry from the horizontal space of
$\Phi$ at $u$ onto $T_{x}(G/K)$,
which induces a linear isometry between
$T^{\perp}_{u}\widetilde{M}$ and $T^{\perp}_{x}M$.

\begin{lem}[{\cite[Theorems 1.9 and 1.10]{TeTh95}}]\label{lem:TT_MtildeM}
The following assertions hold:
\begin{enumerate}
\item[(i)] $M$ is an equifocal submanifold if and only if each connected component
of $\widetilde{M}$ is an isoparametric submanifold.
\item[(ii)] Let $x\in M$, and let $u\in\widetilde{M}$ satisfy $\Phi(u)=x$.
If we identify $T^{\perp}_{u}\widetilde{M}$ with $T^{\perp}_{x}M$ by $d\Phi_{u}$,
then the sets of focal normal vectors of $M$ and $\widetilde{M}$
coincide with considering the multiplicities.
Namely,
\begin{equation}
d\Phi_{u}
(\mathcal{F}(\widetilde{M},u))
=
\mathcal{F}(M,x)
\end{equation}
holds.
\end{enumerate}
\end{lem}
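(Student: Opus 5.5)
The plan is to derive both assertions from one compatibility relation between the normal exponential maps of $M$ and $\widetilde{M}$. That relation is supplied by the Riemannian submersion $\Phi=\pi\circ\phi$ of \cite[4.5 Theorem]{TeTh95}.

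First I would use the fact that the horizontal lift of a geodesic of $G/K$ through $\Phi$ is a straight line in $H^{0}([0,1],\mathfrak{g})$. This holds because $\Phi$ is a Riemannian submersion whose horizontal geodesics are affine lines $u+t\bs{v}$. Since $d\Phi_{u}$ identifies $T^{\perp}_{u}\widetilde{M}$ isometrically with $T^{\perp}_{x}M$, it follows that
\begin{equation}
\Phi\circ\widetilde{\exp}^{\perp}=\exp^{\perp}\circ d\Phi
\quad\text{on } T^{\perp}\widetilde{M}.
\end{equation}
Next, the horizontal lift of a normal parallel field along a curve in $M$ is a parallel normal field of $\widetilde{M}$. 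The reason is that the O'Neill tensor terms cancel when $\exp_{x}(T^{\perp}_{x}M)$ lies in a flat: there the horizontal normal space lifts to an abelian subspace, so the $A$-tensor of $\Phi$ on normal pairs vanishes. Consequently, flatness and trivial holonomy of $T^{\perp}M$ transfer to $T^{\perp}\widetilde{M}$, and conversely. The converse uses that each component of $\widetilde{M}$ is simply connected modulo the fibres of $\Phi$, which are connected because $P(G,\{e\}\times G)$ is connected.

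For (ii), I would differentiate the displayed identity at $\bs{v}\in T^{\perp}_{u}\widetilde{M}$. The plan is to show that $d\Phi$ restricts to an isomorphism
\begin{equation}
d\tilde{\psi}_{\bs{v}}(\operatorname{Ker}d\widetilde{\exp}^{\perp}_{\bs{v}})\cong d\psi_{d\Phi(\bs{v})}(\operatorname{Ker}d\exp^{\perp}_{d\Phi(\bs{v})}).
\end{equation}
Kernel vectors of $d\widetilde{\exp}^{\perp}$ correspond to $\widetilde{M}$-Jacobi fields along the line vanishing at the end. Vertical (fibre) directions give no kernel, since $\widetilde{\exp}^{\perp}$ restricted to the fibre of $\Phi$ is an embedding ($\Phi$-fibres are parallel translated by horizontal lines). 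Hence $d\Phi$ kills nothing in the kernel. Conversely, an $M$-Jacobi field along $\gamma$ vanishing at the end lifts to one along the line, because $\Phi$ has minimal (indeed equidistant) fibres and horizontal lifts commute with the $G$-action on paths. Together these give equality of focal normals with multiplicities.

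For (i), the argument rests on the spectral description in a Hilbert space. For a proper Fredholm submanifold with globally flat normal bundle, the focal normals at $u$ are exactly $\bigcup_{i}(\lambda_{i})_{u}^{-1}(1)$, and the multiplicity of $t\bs{v}$ equals $\dim\operatorname{Ker}(\widetilde{A}_{\bs{v}}-t^{-1}\mathrm{id})$. Since $\widetilde{A}_{\bs{v}}$ is compact self-adjoint, its spectrum with multiplicities (apart from $0$, which is determined by the others) is recovered from the focal data along $\bs{v}$. So orthogonal equivalence of $\widetilde{A}_{\bs{v}_{u}}$ and $\widetilde{A}_{\bs{v}_{u'}}$ is equivalent to invariance of $\mathcal{F}(\widetilde{M},\cdot)$, with multiplicities, under parallel translation. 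By (ii) and the correspondence of normal parallel transport, this is equivalent to condition (iii) of Definition \ref{dfn:def_equifocal}. Conditions (i) and (ii) of that definition correspond to trivial normal holonomy by the first step.

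I expect the main obstacle to be the converse direction for the abelian condition (ii) of Definition \ref{dfn:def_equifocal}. Flatness of $T^{\perp}\widetilde{M}$ alone does not obviously force $\exp_{x}(T^{\perp}_{x}M)$ to be flat. There I would use the Ricci equation for $\widetilde{M}$ together with O'Neill's formula relating the curvature of $G/K$ on $T^{\perp}_{x}M$ to the $A$-tensor of $\Phi$. I would then argue that commutation of the lifted shape operators forces $[\mathfrak{p}$-normal vectors$]=0$.
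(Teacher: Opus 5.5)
The paper does not prove this lemma; it imports it from Terng--Thorbergsson, so your outline has to stand on its own. The forward half (equifocal $\Rightarrow$ isoparametric, and (ii) for equifocal $M$) follows the standard route and is fine in outline. The converse in (i), however, has a genuine gap, and it is wider than the one you flag.

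Every transfer tool you use needs the basic lifts $\bs{\xi}^{h}$ to be $\widetilde{\nabla}^{\perp}$-parallel along the fibres of $\Phi$. These tools are: parallelism on $\widetilde{M}$ of lifted parallel normal fields, descent of trivial holonomy from $T^{\perp}\widetilde{M}$ to $T^{\perp}M$, and the step ``$d\Phi$ kills nothing in the kernel'' in (ii). That parallelism means O'Neill's tensor $\mathcal{A}$ vanishes on normal pairs, which holds exactly when $T^{\perp}M$ is abelian.

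Without abelianness, (ii) is false for the paper's tangential notion of focal normal. For $M=\{x\}$ one has $d\psi=0$, so $M$ has no focal normals. But $\widetilde{M}=\Phi^{-1}(x)$ has focal normals at the lifts of conjugate vectors, with purely vertical kernel. Your injectivity argument uses the fibre map $u'\mapsto u'+\bs{\xi}^{h}_{u'}$. Its differential agrees with $d\widetilde{\exp}^{\perp}$ on $\widetilde{\nabla}^{\perp}$-horizontal vectors only if $\bs{\xi}^{h}$ is $\widetilde{\nabla}^{\perp}$-parallel along the fibre.

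So in the direction ``isoparametric $\Rightarrow$ equifocal'' you may not invoke (ii) or the holonomy transfer until abelianness is proved. The route you sketch does not prove it. In the flat space $V$, the Ricci equation only restates $\widetilde{R}^{\perp}=0$ as $[\widetilde{A}_{\bs{\xi}},\widetilde{A}_{\bs{\eta}}]=0$. The formula $K_{N}(\xi,\eta)=3\|\mathcal{A}_{\xi^{h}}\eta^{h}\|^{2}$ only says abelian $\Leftrightarrow$ $\mathcal{A}=0$ on normal pairs. A flat connection can have the nonzero connection form $U\mapsto-\langle U,\mathcal{A}_{\xi^{h}}\eta^{h}\rangle$ in the basic frame, so nothing yet forces $\mathcal{A}$ to vanish.

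The missing step is a curvature computation along one fibre. Use the gauge action $g*u=gug^{-1}-g'g^{-1}$ and write $\zeta\cdot u=[\zeta,u]-\zeta'$ for its infinitesimal version. Since $P(G,\{e\}\times K)$ acts by isometries preserving every fibre of $\Phi$, basic lifts are gauge invariant: $\bs{\xi}^{h}_{g*u}=g\bs{\xi}^{h}_{u}g^{-1}$. Normalize to $x=eK$ and $u=0$. Then $\widetilde{\nabla}^{\perp}_{\zeta\cdot 0}\bs{\xi}^{h}=\pi_{\nu}[\int_{0}^{1}\zeta\,dt,\xi]$, where $\pi_{\nu}$ is the projection onto $T^{\perp}_{eK}M\subset\mathfrak{p}$. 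Take $\zeta_{i}=f_{i}X_{i}$ with $X_{i}\in\mathfrak{g}$, $f_{i}(0)=f_{i}(1)=0$, and $\int f_{1}=0\neq\int f_{1}f_{2}$. The quadratic terms drop out, and $\langle\widetilde{R}^{\perp}(\zeta_{1}^{*},\zeta_{2}^{*})\bs{\xi}^{h},\bs{\eta}^{h}\rangle$ at $0$ is a nonzero constant multiple of $(\int f_{1}f_{2})\langle[[X_{1},X_{2}],\xi],\eta\rangle$. Since $[\mathfrak{g},\mathfrak{g}]=\mathfrak{g}$, flatness forces $[\xi,\eta]=0$. After that, your transfers run in both directions. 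For the hypersurfaces actually used in the paper this is automatic, because $\exp_{x}(T^{\perp}_{x}M)$ is a geodesic.

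Two smaller repairs are needed.
\begin{enumerate}
\item In infinite dimensions, $\dim\operatorname{Ker}\widetilde{A}_{\bs{v}}$ is \emph{not} determined by the nonzero spectrum, so orthogonal equivalence needs a separate argument. Gauge invariance gives $\zeta\cdot u\in\operatorname{Ker}\widetilde{A}_{\bs{\xi}^{h}}$ whenever $[\zeta,\bs{\xi}^{h}_{u}]\equiv 0$. Taking $\zeta=f\bs{\xi}^{h}_{u}$ with scalar $f\in H^{1}_{0}$ shows the kernel is infinite-dimensional at every point.
\item Fibres of $\Phi$ are $P(G,\{e\}\times K)$-orbits; $P(G,\{e\}\times G)$ acts simply transitively on $V$, so it is the wrong group. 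The fibres are connected because $V$ is contractible and $N$ is simply connected. The lifting of $M$-Jacobi fields in (ii) comes from correcting the vertical endpoint defect with the fibre diffeomorphism $u'\mapsto u'+\bs{\xi}^{h}_{u'}$, not from minimality of the fibres.
\end{enumerate}
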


\begin{lem}[{\cite[Lemma 5.2]{HLO}}]\label{lem:HLO_MtildeM}
Let $M$ be an immersed compact submanifold in $G/K$.
Assume that $M$ is an equifocal submanifold with mean curvature vector field $\bs{H}$.
Then $\widetilde{M}:=\Phi^{-1}(M)$ is a regularizable isoparametric submanifold
which admits the regularized mean curvature vector field $\widetilde{\bs{H}}$.
Furthermore, $\widetilde{\bs{H}}$ coincides with the horizontal lift of $\bs{H}$
with respect to $\Phi$.
\end{lem}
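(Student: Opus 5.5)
By Lemma \ref{lem:TT_MtildeM}, (i), each component of $\widetilde{M}=\Phi^{-1}(M)$ is already an isoparametric submanifold of $H^{0}([0,1],\mathfrak{g})$. It therefore remains to prove three things: every shape operator $\widetilde{A}_{\bs{v}}$ is regularizable, $\bs{v}\mapsto\tr_{r}\widetilde{A}_{\bs{v}}$ is linear, and the resulting $\widetilde{\bs{H}}$ is the horizontal lift of $\bs{H}$. I would argue pointwise. Fix $u\in\widetilde{M}$ with $x=\Phi(u)$, and identify $T^{\perp}_{u}\widetilde{M}$ with $T^{\perp}_{x}M$ by $d\Phi_{u}$.

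The first step is regularizability. The spectrum of $\widetilde{A}_{\bs{v}}$ is determined by the focal hyperplanes: $\widetilde{A}_{\bs{v}}$ acts on $\widetilde{E}_{i}^{u}$ by $\INN{\bs{n}_{i}}{\bs{v}}$, and $l_{i}^{u}=\lambda_{i}^{-1}(1)$. By Lemma \ref{lem:TT_MtildeM}, (ii), these hyperplanes, with their multiplicities, are the focal hyperplanes of $M$ in $T^{\perp}_{x}M$. For an equifocal submanifold they form finitely many parallel families $\{\lambda_{j}=1\}$, where each $\lambda_{j}$ is the restriction of a root and the hyperplanes in a family are translated by a lattice. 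This is Theorem \ref{thm_TeTh95_thm1.6.b} in the hypersurface case, where the focal radii are $t_{j,k}=\theta_{F}+(j-1)l/(2g_{F})+lk$. Hence, within one family, the eigenvalues of $\widetilde{A}_{\bs{v}}$ have the form $\lambda(\bs{v})/(1+k c\,\lambda(\bs{v}))$ with $k\in\mathbb{Z}$, and their multiplicities are independent of $k$ by \eqref{eqn:multF_inv}. These eigenvalues are $O(1/k)$, so $\tr\widetilde{A}_{\bs{v}}^{2}<\infty$. In the ordered sum \eqref{eqn:trr_tildeA_converge} the terms for $k$ and $-k$ pair up and cancel to order $O(1/k^{2})$, so the regularized trace converges. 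Summing the pairs gives the familiar cotangent-type series, which shows at the same time that the regularized trace is finite and explicitly computable.

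The second step, linearity and identification with $\bs{H}$, is where I expect the main difficulty. I would decompose $T_{u}\widetilde{M}=\mathcal{H}_{u}\oplus\mathcal{V}_{u}$ into the horizontal lift of $T_{x}M$ and the vertical space $\Ker d\Phi_{u}$. By O'Neill's formulas for the Riemannian submersion $\Phi$, and because $\bs{v}$ is horizontal, the $\mathcal{H}\mathcal{H}$-block of $\widetilde{A}_{\bs{v}}$ is the horizontal lift of $A_{\bs{v}}$. The $\mathcal{V}\mathcal{V}$-block is the shape operator of the fibre $\Phi^{-1}(x)$ in the direction $\bs{v}$. The off-diagonal blocks are O'Neill $A$-tensor terms.

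One would like to write $\tr_{r}\widetilde{A}_{\bs{v}}=\tr A_{\bs{v}}+\tr_{r}(\text{fibre shape operator})$ and then use that the fibres are minimal in the regularized sense (\cite[4.5 Theorem, (4)]{TeTh95}). The obstacle is that the regularized trace is not additive over blocks a priori, because it depends on how the eigenvalues are ordered. To get around this, I would first try to express $\tr_{r}$ as a limit of ordinary traces of compressions to an exhausting sequence of finite-dimensional subspaces $\mathcal{H}_{u}\oplus\mathcal{V}_{u}^{(n)}$. Here $\mathcal{V}_{u}^{(n)}$ is spanned by the low Fourier modes of the gauge directions $\{g^{-1}\dot{}\,\}$, which can be made explicit because $\Phi$ is the holonomy map. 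This expression would be justified by the symmetric $\pm k$ structure established in the first step. Concretely, the fibre shape operator in direction $\bs{v}$ is conjugate to an operator of the form $\mathrm{ad}(\hat{\bs{v}})\circ\int$, whose eigenvalues come in symmetric pairs $\pm$, so each compression has trace equal to $\tr A_{\bs{v}}$ up to an error from the off-diagonal blocks. That error tends to $0$ because the off-diagonal blocks have finite rank ($\dim\mathcal{H}_{u}<\infty$) and are Hilbert--Schmidt.

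Granting that limit, $\tr_{r}\widetilde{A}_{\bs{v}}=\tr A_{\bs{v}}=\INN{\bs{H}_{x}}{\bs{v}}$. This is linear in $\bs{v}$, so $\widetilde{\bs{H}}_{u}$ exists and equals the horizontal lift of $\bs{H}_{x}$.
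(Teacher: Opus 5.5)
Since the paper quotes this lemma from Heintze--Liu--Olmos without reproving it, I can only check your argument on its own terms. There is a genuine gap at the decisive step.

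The paper later uses exactly one consequence of the lemma, the identity $\tr_{r}\widetilde{A}_{\bs{\xi}^{h}_{u}}=\tr A_{\bs{\xi}}$, which is your second step. Your setup there is the natural one: O'Neill block form, with the horizontal lift of $A_{\bs{v}}$, the fibre shape operator $A^{\mathrm{fib}}_{\bs{v}}$, and finite-rank off-diagonal blocks. But you write ``Granting that limit'', and the equality $\tr_{r}\widetilde{A}_{\bs{v}}=\lim_{n}\tr(P_{n}\widetilde{A}_{\bs{v}}P_{n})$, with $P_{n}$ the projection onto $\mathcal{H}_{u}\oplus\mathcal{V}^{(n)}_{u}$, is essentially the whole content of the lemma.

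Your justification also puts the difficulty in the wrong place. Because $P_{n}$ respects the splitting $\mathcal{H}_{u}\oplus\mathcal{V}_{u}$, one has exactly $\tr(P_{n}\widetilde{A}_{\bs{v}}P_{n})=\tr A_{\bs{v}}+\tr(P^{\mathcal{V}}_{n}A^{\mathrm{fib}}_{\bs{v}}P^{\mathcal{V}}_{n})$. The off-diagonal blocks never enter these traces, so there is no error term to send to $0$. Where they do enter is the spectrum of $\widetilde{A}_{\bs{v}}$, which is what $\tr_{r}$ is defined from. For an operator that is not trace class, the limit of compression traces depends on the exhaustion. For example, if $A$ has eigenvectors $e^{\pm}_{k}$ with eigenvalues $\pm1/k$, then $\tr_{r}A=0$. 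Yet compressing to $\mathrm{span}\{e^{+}_{1},\dots,e^{+}_{2n},e^{-}_{1},\dots,e^{-}_{n}\}$ gives $\sum_{k=n+1}^{2n}1/k\to\log 2$. Your subspaces are adapted at best to the block-diagonal part $D:=(A_{\bs{v}})^{h}\oplus A^{\mathrm{fib}}_{\bs{v}}$, not to $\widetilde{A}_{\bs{v}}=D+B$. The symmetric $\pm k$ structure of the eigenvalues from your first step says nothing about eigenvectors, so it cannot justify the limit.

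The missing ingredient is a perturbation statement for the regularized trace. If $D$ is regularizable and $B$ is self-adjoint of finite rank (or trace class), then $D+B$ is regularizable and $\tr_{r}(D+B)=\tr_{r}D+\tr B$. This can be proved using a Lidskii-type bound $\sum_{k}|\lambda^{\pm}_{k}(D+B)-\lambda^{\pm}_{k}(D)|\le\|B\|_{1}$ for the positive and negative enumerations. Apply it once with $B$ the off-diagonal part, which has finite rank since $\dim\mathcal{H}_{u}<\infty$ and is traceless since it is off-diagonal. Apply it again to split off the finite-dimensional block. With minimality of the fibres, this gives $\tr_{r}\widetilde{A}_{\bs{v}}=\tr A_{\bs{v}}+\tr_{r}A^{\mathrm{fib}}_{\bs{v}}=\tr A_{\bs{v}}=\INN{\bs{H}_{x}}{\bs{v}}$, hence linearity and $\widetilde{\bs{H}}=\bs{H}^{h}$. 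It also yields regularizability directly, so your first step is not needed.

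That first step has two smaller slips. First, the eigenvalue $\bs{v}\mapsto\INN{\bs{n}_{i}}{\bs{v}}$ is linear in $\bs{v}$, so within a family it has the form $\lambda_{j}(\bs{v})/(1+kc_{j})$, not $\lambda(\bs{v})/(1+kc\lambda(\bs{v}))$. Second, the series defining $\tr_{r}$ pairs eigenvalues by global order across all families and multiplicities. The family-wise $k\leftrightarrow -k$ cancellation therefore needs an extra counting argument: the numbers of positive and negative eigenvalues of modulus above each threshold differ by $O(1)$.
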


Let $x=\Phi(u)$.
For $\bs{\xi}\in T^{\perp}_{x}M$, let
$\bs{\xi}^{h}_{u}\in T^{\perp}_{u}\widetilde{M}$
be the horizontal lift of $\bs{\xi}$ with respect to $\Phi$.
Also, for $X\in T_{x}M$, let $X^{h}_{u}$ be the horizontal lift of $X$.
Then
\begin{equation}\label{eqn:dPhiF_F}
d\Phi_{u}
(\widetilde{A}_{\bs{\xi}^{h}_{u}}X^{h}_{u})
=
A_{\bs{\xi}}X
\end{equation}
holds.
From the proof of \cite[Lemma 5.2]{HLO}, the following equality is known:
\begin{equation}\label{eqn:H_trrA}
\INN{\bs{H}_{u}}{\bs{\xi}}
=
\tr_{r}\widetilde{A}_{\bs{\xi}^{h}_{u}}.
\end{equation}

\subsection{Proof of Theorem \ref{thm:equifocal_HA2_formula}}\label{sec:proof_LTtypeformula}

Let $N=G/K$ be a simply-connected irreducible symmetric space of compact type.
Let $M$ be an equifocal hypersurface in $N$
with tangential focal data
$\Gamma^{F}(M)
=
\{\theta_{F},l/2g_{F},\{m^{F}_{1},m^{F}_{2}\};2g_{F}\}$.
Let $\Phi:H^{0}([0,1],\mathfrak{g})\to G/K$
be as in the previous subsection
and $\widetilde{M}:=\Phi^{-1}(M)$ denote the lift of $M$ by $\Phi$.
It follows from Lemmas \ref{lem:TT_MtildeM} and \ref{lem:HLO_MtildeM}
that $\widetilde{M}$ is a regularizable isoparametric hypersurface in $H^{0}([0,1],\mathfrak{g})$.

\begin{proof}[Proof of Theorem \ref{thm:equifocal_HA2_formula}, (i)]
Let $x\in M$ be an arbitrary point.
Let $\bs{\xi}\in T^{\perp}_{x}M$ be a unit normal vector.
Let $u\in\widetilde{M}$ with $\Phi(u)=x$.
We denote by $\bs{\xi}_{u}^{h}\in T^{\perp}_{u}\widetilde{M}$
the horizontal lift of $\bs{\xi}$ with respect to $\Phi$ at $u$.
Let $X\in T_{x}M$ be an arbitrary non-zero tangent vector
and $J_{X}$ denote the Jacobi field along the normal geodesic
$\gamma_{\bs{\xi}}$ satisfying
$J_{X}(0)=X$ and
$J'_{X}(0)=-A_{\bs{\xi}}X$.
We write $X^{h}_{u}\in T_{u}\widetilde{M}$ as
the horizontal lift of $X$ with respect to $\Phi$ at $u$.
Then
$\tilde{\gamma}_{\bs{\xi}_{u}^{h}}(s):=u+s\bs{\xi}_{u}^{h}$
gives the normal geodesic of $\widetilde{M}$ through $u$ in the direction $\bs{\xi}_{u}^{h}$.
It is shown that $\gamma_{\bs{\xi}}(s)=\Phi(\tilde{\gamma}_{\bs{\xi}_{u}^{h}}(s))$ holds.
Let $\widetilde{J}_{X^{h}_{u}}$ be the Jacobi field along $\tilde{\gamma}_{\bs{\xi}_{u}^{h}}$ satisfying
$\widetilde{J}_{X^{h}_{u}}(0)=X^{h}_{u}$
and $\widetilde{J}'_{X^{h}_{u}}(0)=-\widetilde{A}_{\bs{\xi}_{u}^{h}}X^{h}_{u}$.
Since $H^{0}([0,1],\mathfrak{g})$ is flat, we get
$\widetilde{J}_{X^{h}_{u}}(s)
=X^{h}_{u}-s\widetilde{A}_{\bs{\xi}_{u}^{h}}X^{h}_{u}$,
where $T_{\tilde{\gamma}_{\bs{\xi}_{u}^{h}}(s)}\widetilde{M}$ is identified with $T_{u}\widetilde{M}$.
Thus, from \eqref{eqn:dPhiF_F}, we have
\begin{equation}
d\eta_{s\cdot\bs{\xi}}(X)=
J_{X}(s)
=d\Phi_{\tilde{\gamma}_{\bs{\xi}_{u}^{h}}(s)}(\widetilde{J}_{X^{h}_{u}}(s))
=d\Phi_{\tilde{\gamma}_{\bs{\xi}_{u}^{h}}(s)}
(X^{h}_{u}-s\widetilde{A}_{\bs{\xi}_{u}^{h}}X^{h}_{u}).
\end{equation}
Hence, a non-zero real number $\lambda$ is an eigenvalue of
$\widetilde{A}_{\bs{\xi}_{u}^{h}}$ if and only if
$\lambda^{-1}\bs{\xi}$ is a focal normal vector of $M$ at $x$.
By using the tangential focal data $\Gamma^{F}(M)$,
the non-zero eigenvalues of $\widetilde{A}_{\bs{\xi}_{u}^{h}}$
are given as follows:
\begin{equation}\label{eqn:tildeA_nzeigen}
\bigcup_{j=1}^{2g_{F}}\{t_{j,k}^{-1}\mid k\in\mathbb{Z}\},\quad
\text{where $t_{j,k}:=\theta_{F}+(j-1)\dfrac{l}{2g_{F}}+k\cdot l$}.
\end{equation}
We put $V_{0}:=\Ker \widetilde{A}_{\bs{\xi}_{u}^{h}}$.
For each $j\in\{1,\ldots,2g_{F}\}$, 
we define $V_{j}$ of $T_{u}\widetilde{M}$ by
\begin{equation}
V_{j}:=\overline{\bigoplus_{k\in\mathbb{Z}}
\Ker(\widetilde{A}_{\bs{\xi}_{x}^{h}}-\dfrac{1}{t_{j,k}}\id)}.
\end{equation}
Then we have
\begin{equation}
T_{u}\widetilde{M}=V_{0}\oplus \bigoplus_{j=1}^{2g_{F}}V_{j}.
\end{equation}
For each $i\in\{0\}\cup\{1,\dotsc,2g_{F}\}$,
we write $\widetilde{A}_{i}$ as the restriction of
$\widetilde{A}_{\bs{\xi}_{u}^{h}}$ to $V_{i}$.
Then we have
\begin{equation}
\widetilde{A}_{\bs{\xi}_{x}^{h}}
=\widetilde{A}_{0}\oplus \bigoplus_{j=1}^{2g_{F}}\widetilde{A}_{j}.
\end{equation}
Since $\widetilde{A}_{\bs{\xi}_{u}^{h}}$ is regularizable, its restriction
$\widetilde{A}_{i}$ to $V_{i}$ is also regularizable for each
$i\in\{0\}\cup\{1,\dotsc,2g_{F}\}$.
It follows from \cite[Lemma 4.4]{HLO}
that the following equality holds:
\begin{equation}\label{eqn:trrA_trrAj}
\tr_{r}\widetilde{A}_{\bs{\xi}_{u}^{h}}
=\sum_{j=1}^{2g_{F}}\tr_{r}\widetilde{A}_{j}.
\end{equation}
In what follows,
we prove the following holds for each $j\in\{1,\dotsc,2g_{F}\}$:
\begin{equation}\label{eqn:trr_Aj}
\tr_{r}\widetilde{A}_{j}=m^{F}_{j (\mathrm{mod}\,2)}\dfrac{\pi }{l}\cot(\pi z_{j}),\quad
\text{where $z_{j}:=\dfrac{\theta_{F}}{l}+(j-1)\dfrac{1}{2g_{F}}$}.
\end{equation}
The non-zero eigenvalues of $\widetilde{A}_{j}$ are given by
$\{t_{j,k}^{-1}\mid k\in\mathbb{Z}\}$.
For $a\in\{1,\dotsc,m^{F}_{j (\mathrm{mod}\,2)}\}$
and $q\in\mathbb{N}\cup\{0\}$, we define the constants
$\lambda_{j,a+qm^{F}_{j (\mathrm{mod}\,2)}}>0$
and $\mu_{j,a+qm^{F}_{j (\mathrm{mod}\,2)}}<0$ by
\begin{equation}
\lambda_{j,a+qm^{F}_{j (\mathrm{mod}\,2)}}:=\dfrac{1}{t_{j,q}},\quad
\mu_{j,a+qm^{F}_{j (\mathrm{mod}\,2)}}:=\dfrac{1}{t_{j,-q-1}}\\.
\end{equation}
Then we have
\begin{equation}
\tr_{r}\widetilde{A}_{j}
=
\sum_{k=1}^{\infty}(\lambda_{j,k}+\mu_{j,k})\\
=
m^{F}_{j\,(\mathrm{mod}\,2)}
\lim_{s\to\infty}
\sum_{q=-s}^{s}
\dfrac{1}{t_{j,q}} \label{eqn:trr_Aj_1}\\
=
m^{F}_{j\,(\mathrm{mod}\,2)}
\dfrac{\pi}{l}\cot(\pi z_{j}).
\end{equation}
Here, in the last equality, we have used Mittag-Leffler expansion:
\begin{equation}
\lim_{s\to\infty}\sum_{q=-s}^{s}\dfrac{1}{z_{j}+s}=\pi\cot(\pi z_{j}).
\end{equation}
Hence \eqref{eqn:trr_Aj} holds.
By combining \eqref{eqn:trrA_trrAj} with \eqref{eqn:trr_Aj}, we obtain
\begin{equation}\label{eqn:trr_Aj_2}
\tr_{r}\widetilde{A}_{\bs{\xi}_{u}^{h}}
=\sum_{j=1}^{2g_{F}}m^{F}_{j (\mathrm{mod}\,2)}\dfrac{\pi }{l}\cot(\pi z_{j})
=\dfrac{\pi }{l}\left\{m_{1}^{F}
\sum_{j=1}^{g_{F}}\cot(\pi z_{2j-1})
+m_{2}^{F}\sum_{j=1}^{g_{F}}\cot(\pi z_{2j})
\right\}.
\end{equation}
Furthermore, by using
\begin{equation}
\sum_{j=1}^{g_{F}}\cot(\pi z_{2j-1})=g_{F}\cot\dfrac{\bar{\theta}_{F}}{2},\quad
\sum_{j=1}^{g_{F}}\cot(\pi z_{2j})
=-g_{F}\tan\dfrac{\bar{\theta}_{F}}{2},
\end{equation}
we get
\begin{equation}
\tr_{r}\widetilde{A}_{\bs{\xi}_{u}^{h}}
=\dfrac{\pi g_{F}}{l}\left\{m_{1}^{F}\cot\dfrac{\bar{\theta}_{F}}{2}
-m_{2}^{F}\tan\dfrac{\bar{\theta}_{F}}{2}\right\}
=\dfrac{\pi g_{F}(m_{1}^{F}+m_{2}^{F})}{l\cdot\sin\bar{\theta}_{F}}(\delta_{F}+\cos\bar{\theta}_{F}).
\end{equation}
Thus, we have completed the proof of 
Theorem \ref{thm:equifocal_HA2_formula}, (i)
because of $\tr_{r}\widetilde{A}_{\bs{\xi}^{h}_{u}}=\tr A_{\bs{\xi}}(=H)$.
\end{proof}

We give some preparations for the proof of Theorem \ref{thm:equifocal_HA2_formula}, (ii).

\begin{lem}\label{lem:DdtA_A2+Rxi}
Let $M$ be an equifocal hypersurface in $N$
with unit normal vector field $\bs{\xi}$.
Let $M_{t\cdot\bs{\xi}}$ be a parallel hypersurface of $M$
and $A(t)$ denote its shape tensor.
Let $\gamma_{\bs{\xi}_{x}}$ be a normal geodesic of $M$
through $x\in M$.
Then the following equality holds:
\begin{equation}
\left.\dfrac{D}{dt}\right|_{t=0}(A(t))_{\gamma_{\bs{\xi}_{x}}'(t)}
=A_{\bs{\xi}_{x}}^{2}+R_{\bs{\xi}_{x}},
\end{equation}
where $R_{\bs{\xi}_{x}}$ denotes $R(\cdot,\bs{\xi}_{x})\bs{\xi}_{x}$.
\end{lem}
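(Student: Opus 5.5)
The plan is the standard Riccati equation for parallel hypersurfaces, obtained from Jacobi fields along the normal geodesic $\gamma:=\gamma_{\bs{\xi}_{x}}$. Fix $X\in T_{x}M$ and let $J_{X}$ be the $M$-Jacobi field along $\gamma$ with $J_{X}(0)=X$ and $J_{X}'(0)=-A_{\bs{\xi}_{x}}X$, exactly as in the proof of (i). As shown there, $J_{X}(t)=d\eta_{t\bs{\xi}}(X)$. For $t$ near $0$ the map $\eta_{t\bs{\xi}}$ is a diffeomorphism onto the parallel hypersurface $M_{t\bs{\xi}}$, so $J_{X}(t)$ is tangent to $M_{t\bs{\xi}}$ at $\gamma(t)$. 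Moreover $\gamma'(t)$ is the unit normal $\bs{\xi}^{t}$ there, since $\eta_{t\bs{\xi}}$ moves along normal geodesics and, by the Gauss lemma, the normal geodesics of $M$ remain normal to the parallel hypersurfaces.

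The key identity is $J_{X}'(t)=-A(t)_{\gamma'(t)}J_{X}(t)$ for all small $t$. To prove it, extend $X$ to a vector field on $M$ and write $J_{X}(t)=d\eta_{t\bs{\xi}}(X)$. Then $J_{X}'(t)=\frac{D}{dt}\partial_{X}\eta(t,\cdot)=\frac{D}{dX}\partial_{t}\eta=\nabla_{J_{X}(t)}\bs{\xi}^{t}$, because the partial derivatives of $(t,y)\mapsto \eta_{t\bs{\xi}}(y)$ commute under the Levi-Civita connection and $\partial_{t}\eta$ equals the unit normal field $\bs{\xi}^{t}$ of $M_{t\bs{\xi}}$. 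By the Weingarten formula this is $-A(t)_{\bs{\xi}^{t}}J_{X}(t)$; the normal part vanishes since $\bs{\xi}^{t}$ has unit length.

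Next, let $\mathcal{A}(t)$ denote $A(t)_{\gamma'(t)}$, viewed as an endomorphism of $\gamma'(t)^{\perp}$ via parallel translation along $\gamma$. Differentiating $J_{X}'=-\mathcal{A}J_{X}$ covariantly in $t$ and using the Jacobi equation $J_{X}''=-R(J_{X},\gamma')\gamma'$ gives
\begin{equation}
-R(J_{X},\gamma')\gamma'=-\left(\dfrac{D}{dt}\mathcal{A}\right)J_{X}-\mathcal{A}J_{X}'=-\left(\dfrac{D}{dt}\mathcal{A}\right)J_{X}+\mathcal{A}^{2}J_{X}.
\end{equation}
Hence $\frac{D}{dt}\mathcal{A}=\mathcal{A}^{2}+R_{\gamma'}$ on vectors of the form $J_{X}(t)$. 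Since $d\eta_{t\bs{\xi}}$ is onto $T_{\gamma(t)}M_{t\bs{\xi}}$, this holds as an operator identity. Evaluating at $t=0$, where $\mathcal{A}(0)=A_{\bs{\xi}_{x}}$ and $\gamma'(0)=\bs{\xi}_{x}$, yields the lemma.

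The only delicate step is the identification $J_{X}'=-A(t)J_{X}$, which requires the commutation of the covariant derivatives in $t$ and in the $M$-direction together with the fact that $\gamma'(t)$ is the normal of $M_{t\bs{\xi}}$. Everything else is direct differentiation. The sign convention $R_{\bs{\xi}}=R(\cdot,\bs{\xi})\bs{\xi}$ has to be matched with the Jacobi equation $J''+R(J,\gamma')\gamma'=0$, consistent with the lemma's formula.
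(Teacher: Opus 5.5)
Your proposal is correct and follows essentially the same route as the paper. The paper also takes the Jacobi field $J_X$ from the normal geodesic variation, obtains $J_X'=\nabla_{J_X}\gamma'=-(A(t))_{\gamma'(t)}J_X$, differentiates once more, compares with the Jacobi equation and sets $t=0$. Your additional remarks (the Gauss lemma, the vanishing normal part, surjectivity of $d\eta_{t\bs{\xi}}$) simply make explicit what the paper leaves implicit.
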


\begin{proof}
Let $X\in T_{x}M$ be an arbitrary tangent vector.
Let $c(s)$ be a curve in $M$ satisfying
$c(0)=x$ and
$c'(0)=X$.
Let us consider the geodesic variation defined by
\begin{equation}
F(s,t)
:=
\exp^{\perp}_{c(s)}
(t\bs{\xi}_{c(s)}).
\end{equation}
For each fixed $t$, the curve $s\mapsto F(s,t)$ lies in the parallel hypersurface
$M_{t\cdot \bs{\xi}}$.
For each fixed $s$, the curve $t\mapsto F(s,t)$ is a normal geodesic of $M_{t\cdot \bs{\xi}}$.
Then 
\begin{equation}
J_{X}(t)
:=
\left.
\frac{\partial F}{\partial s}
\right|_{s=0}
\end{equation}
is a Jacobi field along $\gamma_{\bs{\xi}_{x}}$ satisfying
$J_{X}(0)=X$ and
$J_{X}'(0)=-A_{\bs{\xi}_{x}}X$.
Hence $J_{X}(t)$ satisfies the Jacobi equation
$J_{X}''(t)+R_{\gamma_{\bs{\xi}_{x}}'(t)}J_{X}(t)=0$.
On the other hand, since $J_{X}(t)$ is tangent to $M_{t\cdot\bs{\xi}}$, we have
\begin{equation}
J_{X}'(t)
=\nabla_{\gamma_{\bs{\xi}_{x}}'(t)}J_{X}(t)
=\nabla_{J_{X}(t)}\gamma_{\bs{\xi}_{x}}'(t)
=-(A(t))_{\gamma_{\bs{\xi}_{x}}'(t)}J_{X}(t).
\end{equation}
By applying this twice, we obtain
\begin{equation}
J_{X}''(t)
=
-\left(
\dfrac{D}{dt}(A(t))_{\gamma_{\bs{\xi}_{x}}'(t)}
\right)J_{X}(t)
+
\left((A(t))_{\gamma_{\bs{\xi}_{x}}'(t)}\right)^{2}J_{X}(t).
\end{equation}
Hence, by combining this with the Jacobi equation, we obtain
\begin{equation}
\left(
\dfrac{D}{dt}(A(t))_{\gamma_{\bs{\xi}_{x}}'(t)}
\right)J_{X}(t)
=
\left((A(t))_{\gamma_{\bs{\xi}_{x}}'(t)}\right)^{2}J_{X}(t)
+
R_{\gamma_{\bs{\xi}_{x}}'(t)}J_{X}(t).
\end{equation}
By substituting $t=0$ into the both sides, we get the assertion.
\end{proof}

\begin{pro}\label{pro:tildeA_A_Ric}
Let $u\in\widetilde{M}$ with $\Phi(u)=x$.
We denote by $\bs{\xi}_{u}^{h}\in T^{\perp}_{u}\widetilde{M}$
the horizontal lift of $\bs{\xi}_{x}$ with respect to $\Phi$ at $u$.
Then the following equality holds:
\begin{equation}\label{eqn:tildeA_A_Ric}
\tr \widetilde{A}_{\bs{\xi}_{u}^{h}}^{2}
=\|A_{\bs{\xi}_{x}}\|^{2}+\operatorname{Ric}^{N}(\bs{\xi}_{x},\bs{\xi}_{x}).
\end{equation}
\end{pro}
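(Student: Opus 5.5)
The plan is to compute both sides of \eqref{eqn:tildeA_A_Ric} as explicit functions of the tangential focal data and compare them. For the left-hand side, the proof of Theorem \ref{thm:equifocal_HA2_formula}, (i) already shows that the non-zero eigenvalues of $\widetilde{A}_{\bs{\xi}_{u}^{h}}$ are the numbers $t_{j,k}^{-1}$ in \eqref{eqn:tildeA_nzeigen}, with multiplicity $m^{F}_{j(\mathrm{mod}\,2)}$. Since $\tr\widetilde{A}^{2}_{\bs{\xi}_{u}^{h}}<\infty$ by regularizability, the series converges absolutely and may be summed in any order:
\begin{equation}
\tr\widetilde{A}_{\bs{\xi}_{u}^{h}}^{2}
=\sum_{j=1}^{2g_{F}}m^{F}_{j(\mathrm{mod}\,2)}\sum_{k\in\mathbb{Z}}\frac{1}{t_{j,k}^{2}}
=\sum_{j=1}^{2g_{F}}m^{F}_{j(\mathrm{mod}\,2)}\Bigl(\frac{\pi}{l}\Bigr)^{2}\frac{1}{\sin^{2}(\pi z_{j})}.
\end{equation}
The last step uses the classical expansion $\sum_{k}(z+k)^{-2}=\pi^{2}/\sin^{2}(\pi z)$, which is the derivative of the Mittag-Leffler expansion of $\pi\cot(\pi z)$.

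For the right-hand side I will use the parallel family $M_{t\bs{\xi}}$ for small $|t|$.

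\textbf{Step 1 (mean curvature along the family).} By the lemma on $\Gamma^{F}(M_{t\bs{\xi}})$, the parallel hypersurface $M_{t\bs{\xi}}$ is equifocal with focal datum $\theta_{F}-t$ and the same $l$, $g_{F}$ and multiplicities. Formula \eqref{eqn:trr_Aj_2}, applied to $M_{t\bs{\xi}}$ with the normal $\bs{\xi}^{t}$, then gives
\begin{equation}
H(t)=\sum_{j=1}^{2g_{F}}m^{F}_{j(\mathrm{mod}\,2)}\frac{\pi}{l}\cot\Bigl(\pi\bigl(z_{j}-\tfrac{t}{l}\bigr)\Bigr).
\end{equation}
Differentiating at $t=0$ yields exactly $\sum_{j}m^{F}_{j(\mathrm{mod}\,2)}(\pi/l)^{2}\sin^{-2}(\pi z_{j})$. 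Hence $\tfrac{d}{dt}\big|_{t=0}H(t)=\tr\widetilde{A}^{2}_{\bs{\xi}_{u}^{h}}$. This uses only the closed form, so no termwise differentiation of the series is needed.

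\textbf{Step 2 (geometric derivative).} I take the trace in Lemma \ref{lem:DdtA_A2+Rxi}. Trace commutes with $D/dt$: using a frame of $T_{\gamma(t)}M_{t\bs{\xi}}$ obtained by parallel transport along $\gamma_{\bs{\xi}_{x}}$, which stays tangent to $M_{t\bs{\xi}}$ because $\gamma'$ is parallel and normal, one gets $\tfrac{d}{dt}\big|_{t=0}H(t)=\tr A_{\bs{\xi}_{x}}^{2}+\tr_{T_{x}M}R_{\bs{\xi}_{x}}$. Since $R(\bs{\xi}_{x},\bs{\xi}_{x})\bs{\xi}_{x}=0$, the trace of $R_{\bs{\xi}_{x}}$ over $T_{x}M$ equals its trace over $T_{x}N$, which is $\operatorname{Ric}^{N}(\bs{\xi}_{x},\bs{\xi}_{x})$. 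Also $\tr A_{\bs{\xi}_{x}}^{2}=\|A_{\bs{\xi}_{x}}\|^{2}$.

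Comparing Step 1 and Step 2 gives \eqref{eqn:tildeA_A_Ric}.

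\textbf{Main obstacle: sign and orientation consistency.} The unit normal $\bs{\xi}^{t}$ of $M_{t\bs{\xi}}$ must be the one for which the shifted focal datum $\theta_{F}-t$ holds, so that formula (i) applies to $M_{t\bs{\xi}}$ with the same sign convention for $H$ as in Lemma \ref{lem:DdtA_A2+Rxi}. I will check this by noting that the focal radii of $M_{t\bs{\xi}}$ along $\bs{\xi}^{t}$ are the $t_{j,k}-t$. Consequently the eigenvalues of the lifted shape operator are $(t_{j,k}-t)^{-1}$, and their derivative at $t=0$ is $t_{j,k}^{-2}>0$, which matches the positivity of $\tr A^{2}$.
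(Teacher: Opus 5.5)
Your proof is correct. Its skeleton matches the paper's: both differentiate $t\mapsto H(t)$ along the parallel family $M_{t\bs{\xi}}$ at $t=0$, and both evaluate the geometric side by tracing the Riccati equation of Lemma \ref{lem:DdtA_A2+Rxi}. The routes differ on the analytic side.

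\textbf{The paper's route.} It never uses the explicit spectrum in this proposition. It writes the shape operator of the lifted parallel hypersurface as $\widetilde{A}_{\bs{\xi}_{u}^{h}}(\id-t\widetilde{A}_{\bs{\xi}_{u}^{h}})^{-1}$. It then expresses its regularized trace through the eigenvalues $\lambda_{k}/(1-t\lambda_{k})$ and $\mu_{k}/(1-t\mu_{k})$, which requires that for small $|t|$ the sign and ordering of eigenvalues, and hence the pairing defining $\tr_{r}$, are unchanged. Finally it passes to the limit in the difference quotient by dominated convergence, with $2\sum_{k}(\lambda_{k}^{2}+\mu_{k}^{2})=2\tr\widetilde{A}^{2}_{\bs{\xi}_{u}^{h}}<\infty$ as majorant. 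The closed form of $\tr\widetilde{A}^{2}_{\bs{\xi}_{u}^{h}}$ comes only afterwards, in Proposition \ref{pro:tildeA2}.

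\textbf{Your route.} You compute both $\tr\widetilde{A}^{2}_{\bs{\xi}_{u}^{h}}$ and $H(t)$ in closed form from the focal data. You use \eqref{eqn:trr_Aj_2} applied to each $M_{t\bs{\xi}}$ together with the $\csc^{2}$ Mittag-Leffler expansion, and then compare the two expressions.

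\textbf{Trade-offs.} The paper's argument is structural: $\left.\frac{d}{dt}\right|_{t=0}\tr_{r}\widetilde{A}(t)=\tr\widetilde{A}^{2}$ holds for any regularizable lift of this form, whatever its spectrum. Yours avoids the limit--sum interchange and the ordering issue. In exchange it depends on the explicit dihedral focal structure of every parallel hypersurface and on the orientation bookkeeping, which you handle correctly.

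Your route also absorbs Proposition \ref{pro:tildeA2}. Moreover, it shows that Theorem \ref{thm:equifocal_HA2_formula}, (ii) needs only part (i) plus the Riccati equation, with no reference to $\widetilde{A}$. Indeed $\|A\|^{2}=\left.\frac{d}{dt}\right|_{t=0}H(t)-\kappa$, and differentiating \eqref{eqn:H(t)_formula} gives $\frac{1+\delta_{F}\cos\bar{\theta}_{F}}{2\epsilon_{F}\sin^{2}\bar{\theta}_{F}}$ directly.
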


\begin{proof}
Since the parallel hypersurface $M_{t\cdot\bs{\xi}}$
is an equifocal hypersurface in $N$,
its lift $\tilde{M}_{t\cdot\bs{\xi}}:=\Phi^{-1}(M_{t\cdot\bs{\xi}})$
is a regularizable isoparametric hypersurface in $H^{0}([0,1],\mathfrak{g})$.
Let $\widetilde{A}(t)$ denote the shape tensor of $\tilde{M}_{t\cdot\bs{\xi}}$.
It follows from \cite[the proof of Lemma 5.2]{HLO} that
$\tr_{r}(\widetilde{A}(t))_{\tilde{\gamma}_{\tilde{\bs{\xi}}_{u}^{h}}'(t)}
=\tr (A(t))_{\gamma_{\bs{\xi}_{x}}'(t)}$ holds.
By differentiating both sides with respect to $t$ at $t=0$, we obtain
\begin{equation}
\left.\dfrac{d}{dt}\right|_{t=0}
\tr_{r}
(\widetilde{A}(t))_{\tilde{\gamma}_{\tilde{\bs{\xi}}_{u}^{h}}'(t)}
=\left.\dfrac{d}{dt}\right|_{t=0}
\tr (A(t))_{\gamma_{\bs{\xi}_{x}}'(t)}
\end{equation}
Thus, it is sufficient to show the following two equalities:
\begin{equation}\label{eqn:ddt0trrtildeAt}
\left.\dfrac{d}{dt}\right|_{t=0}
\tr_{r}
(\widetilde{A}(t))_{\tilde{\gamma}_{\tilde{\bs{\xi}}_{u}^{h}}'(t)}
=\tr \widetilde{A}_{\bs{\xi}_{u}^{h}}^{2},%
\quad
\left.\dfrac{d}{dt}\right|_{t=0}
\tr (A(t))_{\gamma_{\bs{\xi}_{x}}'(t)}=\tr A_{\bs{\xi}_{x}}^{2}+\operatorname{Ric}^{N}(\bs{\xi}_{x},\bs{\xi}_{x}).
\end{equation}
The hypersurface $\widetilde{M}_{t\cdot \bs{\xi}}:=\Phi^{-1}(M_{t\cdot \bs{\xi}})$ is a parallel hypersurface of $\widetilde{M}$.
More precisely,
for the parallel normal vector field $\widetilde{\bs{\xi}}$ on $\widetilde{M}$ satisfying
$\widetilde{\bs{\xi}}_{u}=\bs{\xi}_{u}^{h}$,
we have $\widetilde{M}_{t\cdot \bs{\xi}}=\tilde{\eta}_{t\cdot\widetilde{\bs{\xi}}}(\widetilde{M})$.
Since $H^{0}([0,1],\mathfrak{g})$ is flat,
we have
\begin{equation}
\dfrac{D}{dt}
(\widetilde{A}(t))_{\tilde{\gamma}_{\bs{\xi}_{u}^{h}}'(t)}
=
(\widetilde{A}(t))_{\tilde{\gamma}_{\bs{\xi}_{u}^{h}}'(t)}^{2},
\end{equation}
in the same method as the proof of Lemma \ref{lem:DdtA_A2+Rxi}.
Hence we have
\begin{equation}
(\widetilde{A}(t))_{\tilde{\gamma}_{\bs{\xi}_{u}^{h}}'(t)}
=
\widetilde{A}_{\bs{\xi}_{u}^{h}}
\left(
\id-t\widetilde{A}_{\bs{\xi}_{u}^{h}}
\right)^{-1}.
\end{equation}
We write
$\mu_{1}\leq \mu_{2}\leq \dotsb <0< \dotsb \leq \lambda_{2}\leq \lambda_{1}$
as the non-zero eigenvalues of
$\widetilde{A}_{\bs{\xi}_{u}^{h}}$
repeated according to their multiplicities.
Then, for sufficiently small $|t|$, the non-zero eigenvalues of
$(\widetilde{A}(t))_{\tilde{\gamma}_{\bs{\xi}_{u}^{h}}'(t)}$ repeated according to their multiplicities
are given by
\begin{equation}
\dfrac{\mu_{1}}{1-t\mu_{1}}
\leq
\dfrac{\mu_{2}}{1-t\mu_{2}}
\leq \dotsb <0< \dotsb \leq
\dfrac{\lambda_{2}}{1-t\lambda_{2}}
\leq
\dfrac{\lambda_{1}}{1-t\lambda_{1}} .
\end{equation}
Hence we have
\begin{equation}
\tr_{r}(\widetilde{A}(t))_{\tilde{\gamma}_{\bs{\xi}_{u}^{h}}'(t)}
=
\sum_{k=1}^{\infty}
\left(
\dfrac{\lambda_{k}}{1-t\lambda_{k}}
+
\dfrac{\mu_{k}}{1-t\mu_{k}}
\right).
\end{equation}
By differentiating both sides with respect to $t$ at $t=0$, we obtain
\begin{equation}
\left.\dfrac{d}{dt}\right|_{t=0}
\tr_{r}
(\widetilde{A}(t))_{\tilde{\gamma}_{\bs{\xi}_{u}^{h}}'(t)}
=
\lim_{t\to 0}
\sum_{k=1}^{\infty}
\left(
\dfrac{\lambda_{k}^{2}}{1-t\lambda_{k}}
+
\dfrac{\mu_{k}^{2}}{1-t\mu_{k}}
\right).
\end{equation}
For sufficiently small $|t|$, we may assume that
\begin{equation}
1-t\lambda_{k}\geq \dfrac{1}{2},
\quad
1-t\mu_{k}\geq \dfrac{1}{2},
\end{equation}
for all $k$.
Then the following inequality holds:
\begin{equation}
\left|
\dfrac{\lambda_{k}^{2}}{1-t\lambda_{k}}
+
\dfrac{\mu_{k}^{2}}{1-t\mu_{k}}
\right|
\leq
2(\lambda_{k}^{2}+\mu_{k}^{2}),
\end{equation}
from which we obtain
\begin{equation}
\sum_{k=1}^{\infty}
\left|
\dfrac{\lambda_{k}^{2}}{1-t\lambda_{k}}
+
\dfrac{\mu_{k}^{2}}{1-t\mu_{k}}
\right|
\leq
2\sum_{k=1}^{\infty}(\lambda_{k}^{2}+\mu_{k}^{2})
=
2\tr \widetilde{A}_{\bs{\xi}_{u}^{h}}^{2}
<\infty.
\end{equation}
Here, in the last inequality, we used the fact that
$\widetilde{A}_{\bs{\xi}_{u}^{h}}$ is regularizable
(see Definition \ref{dfn:A_regularizable}, (1)).
By using the dominated convergence theorem, we obtain
\begin{equation}
\lim_{t\to 0}
\sum_{k=1}^{\infty}
\left(
\dfrac{\lambda_{k}^{2}}{1-t\lambda_{k}}
+
\dfrac{\mu_{k}^{2}}{1-t\mu_{k}}
\right)
=
\sum_{k=1}^{\infty}
\lim_{t\to 0}
\left(
\dfrac{\lambda_{k}^{2}}{1-t\lambda_{k}}
+
\dfrac{\mu_{k}^{2}}{1-t\mu_{k}}
\right)
=
\tr \widetilde{A}_{\bs{\xi}_{u}^{h}}^{2}.
\end{equation}
Hence the first equality in \eqref{eqn:ddt0trrtildeAt} holds.
On the other hand, by using Lemma \ref{lem:DdtA_A2+Rxi}, we have
\begin{equation}
\left.\dfrac{d}{dt}\right|_{t=0}
\tr (A(t))_{\gamma_{\bs{\xi}_{x}}'(t)}
=\tr A_{\bs{\xi}_{x}}^{2}
+
\operatorname{Ric}^{N}(\bs{\xi}_{x},\bs{\xi}_{x}),
\end{equation}
from which the second equality in \eqref{eqn:ddt0trrtildeAt} holds.
Thus, we have completed the proof.
\end{proof}

\begin{pro}\label{pro:tildeA2}
The following equality holds:
\begin{equation}\label{eqn:tildeA2}
\tr \widetilde{A}^{2}_{\bs{\xi}^{h}}
=
\dfrac{1}{2\epsilon_{F}\cdot\sin^{2}\bar{\theta}_{F}}(1+\delta_{F}\cos\bar{\theta}_{F})
\end{equation}
\end{pro}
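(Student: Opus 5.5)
The plan is to compute $\tr\widetilde{A}^{2}_{\bs{\xi}^{h}}$ directly from the spectrum of $\widetilde{A}_{\bs{\xi}^{h}_{u}}$ found in the proof of Theorem \ref{thm:equifocal_HA2_formula}, (i). By \eqref{eqn:tildeA_nzeigen}, the non-zero eigenvalues are $t_{j,k}^{-1}$ for $j\in\{1,\dotsc,2g_{F}\}$ and $k\in\mathbb{Z}$. By Theorem \ref{thm_TeTh95_thm1.6.b} and Lemma \ref{lem:TT_MtildeM}, (ii), the eigenvalue $t_{j,k}^{-1}$ has multiplicity $m^{F}_{j\,(\mathrm{mod}\,2)}$. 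Since $\tr\widetilde{A}^{2}<\infty$, we may rearrange freely (absolute convergence), so
\begin{equation}
\tr\widetilde{A}^{2}_{\bs{\xi}^{h}_{u}}
=\sum_{j=1}^{2g_{F}}m^{F}_{j\,(\mathrm{mod}\,2)}\sum_{k\in\mathbb{Z}}\frac{1}{t_{j,k}^{2}}
=\frac{1}{l^{2}}\sum_{j=1}^{2g_{F}}m^{F}_{j\,(\mathrm{mod}\,2)}\sum_{k\in\mathbb{Z}}\frac{1}{(z_{j}+k)^{2}},
\end{equation}
with $z_{j}$ as in \eqref{eqn:trr_Aj}. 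The inner sum is evaluated by the classical expansion $\sum_{k\in\mathbb{Z}}(z+k)^{-2}=\pi^{2}/\sin^{2}(\pi z)$. This expansion is the termwise derivative of the Mittag-Leffler cotangent series already used in the proof of (i), and the termwise differentiation is legitimate by uniform convergence. We obtain $\tr\widetilde{A}_{j}^{2}=m^{F}_{j\,(\mathrm{mod}\,2)}(\pi/l)^{2}\csc^{2}(\pi z_{j})$.

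Next we sum over $j$, separating odd and even indices as in \eqref{eqn:trr_Aj_2}. Put $x:=\pi\theta_{F}/l$, so that $g_{F}x=\bar{\theta}_{F}/2$. The odd indices give $\pi z_{2j-1}=x+(j-1)\pi/g_{F}$, and the even indices give $\pi z_{2j}=x+\pi/(2g_{F})+(j-1)\pi/g_{F}$. The key identity is
\begin{equation}
\sum_{j=0}^{g-1}\csc^{2}\Bigl(y+\frac{j\pi}{g}\Bigr)=g^{2}\csc^{2}(gy).
\end{equation}
This is obtained by differentiating in $y$ the identity $\sum_{j=0}^{g-1}\cot(y+j\pi/g)=g\cot(gy)$, which is exactly the identity used in the proof of (i). Applying it with $y=x$ and with $y=x+\pi/(2g_{F})$ yields
\begin{equation}
\tr\widetilde{A}^{2}_{\bs{\xi}^{h}_{u}}
=\frac{\pi^{2}g_{F}^{2}}{l^{2}}\left\{\frac{m^{F}_{1}}{\sin^{2}(\bar{\theta}_{F}/2)}+\frac{m^{F}_{2}}{\cos^{2}(\bar{\theta}_{F}/2)}\right\}.
\end{equation}

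Finally, we rewrite this using $\sin^{-2}(\bar{\theta}_{F}/2)=2(1+\cos\bar{\theta}_{F})/\sin^{2}\bar{\theta}_{F}$ and $\cos^{-2}(\bar{\theta}_{F}/2)=2(1-\cos\bar{\theta}_{F})/\sin^{2}\bar{\theta}_{F}$. The bracket then becomes $2(m^{F}_{1}+m^{F}_{2})(1+\delta_{F}\cos\bar{\theta}_{F})/\sin^{2}\bar{\theta}_{F}$. Since $1/(2\epsilon_{F})=2\pi^{2}g_{F}^{2}(m^{F}_{1}+m^{F}_{2})/l^{2}$, this gives \eqref{eqn:tildeA2}. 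The only step needing genuine care is the multiplicity bookkeeping, together with the justification that the double series can be summed in this order. Both are routine given absolute convergence. The trigonometric identities, which are the real content, follow by differentiating those already used for part (i).
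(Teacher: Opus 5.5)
Your proof is correct; it differs from the paper's only in how the finite sum over $j$ is handled.

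The paper never computes the blockwise traces $\tr\widetilde{A}_j^2$. It notes that $t_{2j-1,k}=\theta_F+(j-1+kg_F)\,l/g_F$, and similarly for even indices. Since $j-1+kg_F$ runs over $\mathbb{Z}$ exactly once, the odd-index $t_{2j-1,k}$ form a single arithmetic progression with step $l/g_F$. One application of $\sum_{k\in\mathbb{Z}}(z+k)^{-2}=\pi^2\csc^2(\pi z)$ at $z=g_F\theta_F/l$ then gives $\csc^2(\bar{\theta}_F/2)$ directly, with no finite trigonometric identity.

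You instead apply the expansion blockwise at period $l$. You then need $\sum_{j=0}^{g-1}\csc^2(y+j\pi/g)=g^2\csc^2(gy)$, which you obtain by differentiating the cotangent identity from part (i). The two routes are equivalent: the paper's reindexing, combined with the Mittag-Leffler expansion, is in effect a proof of that finite identity.

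The paper's route is shorter. It exploits the fact that all terms of $\tr\widetilde{A}^2$ are positive, so the double series can be reindexed freely. Yours mirrors the blockwise structure of part (i) and reuses its cotangent identity.

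Your explicit tracking of the multiplicities $m^F_{j\,(\mathrm{mod}\,2)}$ is also more careful than the first expression in \eqref{eqn:tildeA2_0}, which omits them.
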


\begin{proof}
Since the non-zero eigenvalues of $\widetilde{A}_{\tilde{\bs{\xi}}_{x}}$
are given in \eqref{eqn:tildeA_nzeigen},
we get
\begin{equation}\label{eqn:tildeA2_0}
\tr \widetilde{A}^{2}_{\bs{\xi}^{h}}
= \sum_{k\in\mathbb{Z}}\sum_{j=1}^{2g_{F}}\dfrac{1}{t_{j,k}^{2}}
=m_{1}^{F}\sum_{k\in\mathbb{Z}}
\sum_{j=1}^{g_{F}}\dfrac{1}{t_{2j-1,k}^{2}}
+m_{2}^{F}\sum_{k\in\mathbb{Z}}\sum_{j=1}^{g_{F}}\dfrac{1}{t_{2j,k}^{2}}
\end{equation}
Since $j-1+kg_{F}$ runs over all integers exactly once as
$j=1,\dotsc,g_{F}$ and $k\in\mathbb{Z}$,
we have
\begin{equation}
\sum_{k\in\mathbb{Z}}
\sum_{j=1}^{g_{F}}
\dfrac{1}{t_{2j-1,k}^{2}}
=
\sum_{k\in\mathbb{Z}}
\left\{\theta_{F}+k\dfrac{l}{g_{F}}\right\}^{-2} 
=
\dfrac{g_{F}^{2}}{l^{2}}
\sum_{k\in\mathbb{Z}}
\left\{\dfrac{g_{F}}{l}\theta_{F}+k\right\}^{-2}.
\end{equation}
From Theorem \ref{thm_TeTh95_thm1.6.b}, (1),
we get $0<(g_{F}/l)\theta_{F}<1/2$.
In particular, we have $(g_{F}/l)\theta_{F}\notin\mathbb{Z}$.
By using Mittag-Leffler expansion
\begin{equation}
\sum_{k\in\mathbb{Z}}\dfrac{1}{(z+k)^{2}}
=\pi^{2}\csc^{2}(\pi z)
\quad
(z\in\mathbb{C}-\mathbb{Z}),
\end{equation}
we have
\begin{equation}
\sum_{k\in\mathbb{Z}}
\sum_{j=1}^{g_{F}}
\dfrac{1}{t_{2j-1,k}^{2}}
=\dfrac{g_{F}^{2}}{l^{2}}
\cdot\pi^{2}\csc^{2}(\pi\dfrac{g_{F}}{l}\cdot\theta_{F})
=\dfrac{2\pi^{2}g_{F}^{2}}{l^{2}}\dfrac{1}{1-\cos\bar{\theta}_{F}}.\label{eqn:tildeA2_1}
\end{equation}
A similar calculation shows
\begin{equation}\label{eqn:tildeA2_2}
\sum_{k\in\mathbb{Z}}
\sum_{j=1}^{g_{F}}
\dfrac{1}{t_{2j,k}^{2}}
=\dfrac{2\pi^{2}g_{F}^{2}}{l^{2}}\dfrac{1}{1+\cos\bar{\theta}_{F}}.
\end{equation}
By substituting \eqref{eqn:tildeA2_1} and \eqref{eqn:tildeA2_2}
into the right-hand side of \eqref{eqn:tildeA2_0}, we obtain
\eqref{eqn:tildeA2}.
\end{proof}

\begin{proof}[Proof of Theorem \ref{thm:equifocal_HA2_formula}, (ii)]
The equality 
\eqref{eqn:gLTform_A2_Ric}
follows immediately from Proposition \ref{pro:tildeA_A_Ric} and Proposition~\ref{pro:tildeA2}.
\end{proof}

\subsection{Parallel hypersurfaces and Liu-Terng type formulas}

Let $M$ be an equifocal hypersurface in $N$
with tangential focal data
$\Gamma^{F}(M)=\{\theta_{F},l/2g_{F},\{m_{1}^{F},m_{2}^{F}\};2g_{F}\}$.
Let $\bs{\xi}$ be a unit normal vector field on $M$
and $M_{t\cdot\bs{\xi}}:=\eta_{t\cdot\bs{\xi}}(M)$ be a parallel hypersurface of $M$.
Then the tangential focal data of the equifocal hypersurface $M_{t\cdot\bs{\xi}}$
is given as follows:
\begin{equation}\label{eqn:focaldata_parallel}
\Gamma^{F}(M_{t\cdot\bs{\xi}})=\left\{\theta_{F}-t,\dfrac{l}{2g_{F}},\{m_{1}^{F},m_{2}^{F}\};2g_{F}\right\}.
\end{equation}

\begin{notation}\label{dfn:dfn_lambdamuj_t}
Retain the above setting.
We define the constant $\bar{\theta}_{F}^{(t)}$ as follows:
\begin{equation}
\bar{\theta}_{F}^{(t)}:=\pi\dfrac{2g_{F}}{l}\cdot (\theta_{F}-t).
\end{equation}
\end{notation}

\begin{pro}\label{pro:HtAtnorm}
Let $H(t)$ denote the mean curvature of $M_{t\cdot\bs{\xi}}$
and $A(t)$ denote the shape tensor of $M_{t\cdot\bs{\xi}}$.
Then, we have the following equalities:
\begin{align}
H(t)&=\dfrac{\pi g_{F}(m^{F}_{1}+m^{F}_{2})}{l \cdot \sin\bar{\theta}_{F}^{(t)}}
(\delta_{F}+\cos\bar{\theta}_{F}^{(t)}),\label{eqn:H(t)_formula}\\
\|A(t)\|^{2}
&=\dfrac{1}{2\epsilon_{F}\cdot \sin^{2}\bar{\theta}_{F}^{(t)}}
(1+\delta_{F}\cos\bar{\theta}_{F}^{(t)})-\kappa.\label{eqn:A(t)_formula}
\end{align}
\end{pro}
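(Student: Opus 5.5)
The plan is to apply Theorem \ref{thm:equifocal_HA2_formula} directly to the parallel hypersurface $M_{t\cdot\bs{\xi}}$ rather than to $M$. Fix $t\in(\theta_{F}-l/(2g_{F}),\theta_{F})$. By the facts recalled in Subsection \ref{sec:parallel_FRA} (from \cite[Theorem 1.6]{TeTh95}), $M_{t\cdot\bs{\xi}}$ is again an equifocal hypersurface of $N$. Its unit normal field is the parallel field $\bs{\xi}^{t}$ determined by $\gamma_{\bs{\xi}_{x}}'(t)$. So Theorem \ref{thm:equifocal_HA2_formula} applies to the pair $(M_{t\cdot\bs{\xi}},\bs{\xi}^{t})$, and the mean curvature and squared norm of the shape operator are expressed in the data of $\Gamma^{F}(M_{t\cdot\bs{\xi}})$.

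Next I substitute the tangential focal data \eqref{eqn:focaldata_parallel} of the parallel hypersurface. By the lemma in Subsection \ref{sec:parallel_FRA}, passing from $M$ to $M_{t\cdot\bs{\xi}}$ changes only the first entry, which becomes $\theta_{F}-t$. The spacing $l/(2g_{F})$, the number $2g_{F}$ and the multiplicities $m^{F}_{1},m^{F}_{2}$ are unchanged. The normal circle is also unchanged, since it is the same closed geodesic image, so its length $l$ is the same. Hence $\delta_{F}$ and $\epsilon_{F}$ computed from $\Gamma^{F}(M_{t\cdot\bs{\xi}})$ coincide with those of $M$. The angle $\bar{\theta}_{F}$ is replaced by $\pi\frac{2g_{F}}{l}(\theta_{F}-t)=\bar{\theta}_{F}^{(t)}$. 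The Einstein constant $\kappa$ depends only on $N$.

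Plugging these into (i) and (ii) of Theorem \ref{thm:equifocal_HA2_formula} gives \eqref{eqn:H(t)_formula} and \eqref{eqn:A(t)_formula} at once. The only delicate point is to check that the hypotheses of Theorem \ref{thm:equifocal_HA2_formula} hold for the parallel hypersurface. This requires two things. First, the orientation and labelling of the focal data must match \eqref{eqn:focaldata_parallel}: the first focal radius along $\bs{\xi}^{t}$ is $\theta_{F}-t$, which lies in $(0,l/(2g_{F}))$ exactly for $t$ in the stated interval. Second, the multiplicity $m^{F}_{1}$ must still be attached to the odd-indexed focal points. Both follow because the focal points along the normal geodesic are the same set, only reparametrized by a shift of $t$. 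Therefore $t'_{j,k}=t_{j,k}-t$, with the multiplicities preserved.
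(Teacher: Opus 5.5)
Your proposal is correct and follows the paper's own argument: the paper proves this by applying Theorem \ref{thm:equifocal_HA2_formula} to the equifocal parallel hypersurface $M_{t\cdot\bs{\xi}}$ with its shifted tangential focal data \eqref{eqn:focaldata_parallel}. Your extra checks make explicit what the paper treats as clear: that $\theta_{F}-t$ is the smallest positive focal radius along $\bs{\xi}^{t}$ and still carries multiplicity $m^{F}_{1}$, and that $l$, $g_{F}$, the multiplicities, and hence $\delta_{F}$, $\epsilon_{F}$ and $\kappa$ are unchanged.
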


\begin{proof}
The equalities 
\eqref{eqn:H(t)_formula}
and \eqref{eqn:A(t)_formula}
 follow immediately from
Theorem \ref{thm:equifocal_HA2_formula}.
\end{proof}

\begin{cor}\label{cor:minimal_FRA_ca_equi}
Let $M$ be an equifocal hypersurface.
Then, the following assertions hold:
\begin{enumerate}
\item[(i)] There exists a unique minimal equifocal hypersurface
in the family of parallel hypersurfaces of $M$,
which is denoted by $M_{\min}$.
\item[(ii)] $M_{\min}$ is characterized by
the equation given in Corollary \ref{cor:equifocal_minimal}
with the tangential focal data of $M_{\min}$.
\item[(iii)] The squared norm of the shape operator $A^{\min}$ of $M_{\min}$
has the following expression:
\begin{equation}
\|A^{\min}\|^{2}=\dfrac{1-2\kappa\epsilon_{F}}{2\epsilon_{F}}.
\end{equation}
\end{enumerate}
\end{cor}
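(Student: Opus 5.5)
The plan is to view the parallel family through the formula \eqref{eqn:H(t)_formula} of Proposition \ref{pro:HtAtnorm}, as a function of the single parameter $\tau:=\bar{\theta}_{F}^{(t)}$. As $t$ runs over $(\theta_{F}-l/(2g_{F}),\theta_{F})$, the parallel hypersurfaces $M_{t\cdot\bs{\xi}}$ are exactly the non-focal leaves. Along this interval $\tau=\pi\frac{2g_{F}}{l}(\theta_{F}-t)$ decreases strictly and bijectively from $\pi$ to $0$, and by \eqref{eqn:focaldata_parallel} the data $\delta_{F},\epsilon_{F},g_{F},l,m^{F}_{i}$ do not depend on $t$. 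Since $\sin\tau>0$ on $(0,\pi)$, Corollary \ref{cor:equifocal_minimal} applied to $M_{t\cdot\bs{\xi}}$ says that $M_{t\cdot\bs{\xi}}$ is minimal if and only if $\cos\tau=-\delta_{F}$.

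For (i), I use that $-1<\delta_{F}<1$, which holds because $m^{F}_{1},m^{F}_{2}\geq 1$. Since $\cos$ is a strictly decreasing bijection $(0,\pi)\to(-1,1)$, the equation $\cos\tau=-\delta_{F}$ has exactly one solution $\tau_{\min}=\arccos(-\delta_{F})$. This gives exactly one $t_{\min}$, and hence exactly one minimal hypersurface $M_{\min}:=M_{t_{\min}\cdot\bs{\xi}}$ in the family. One small point needs care. The family of parallel hypersurfaces is really indexed by $t\in\mathbb{R}$ modulo the Coxeter group action. Values of $t$ outside the fundamental interval give either focal submanifolds or the same leaves again, since $\eta_{l\bs{\xi}}=\mathrm{id}$ and the reflections permute leaves. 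I would say explicitly that it is enough to work on the fundamental domain $\widetilde{C}$ of \eqref{eqn:Coxeter_domain}, because $\{M_{t\cdot\bs{\xi}}\}$ over that closed interval already exhausts the singular Riemannian foliation. This is the only mildly delicate step, and it uses only the facts recalled in Subsection \ref{sec:parallel_FRA}.

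Part (ii) is then immediate. The tangential focal data of $M_{\min}$ is $\{\theta_{F}-t_{\min},l/(2g_{F}),\{m_{1}^{F},m_{2}^{F}\};2g_{F}\}$ by \eqref{eqn:focaldata_parallel}, and the defining equation is exactly $\delta_{F}+\cos\bar{\theta}_{F}^{(t_{\min})}=0$, which is the condition of Corollary \ref{cor:equifocal_minimal} for $M_{\min}$.

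For (iii), substitute $\cos\tau_{\min}=-\delta_{F}$ into \eqref{eqn:A(t)_formula}. This gives $1+\delta_{F}\cos\tau_{\min}=1-\delta_{F}^{2}$ and $\sin^{2}\tau_{\min}=1-\delta_{F}^{2}$, so the ratio is $1$. Hence $\|A^{\min}\|^{2}=\frac{1}{2\epsilon_{F}}-\kappa=\frac{1-2\kappa\epsilon_{F}}{2\epsilon_{F}}$. As a check, the same value also follows from the corollary with parameter $c$ after setting $H=0$.
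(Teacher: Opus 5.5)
Your proof is correct and is essentially the paper's argument: restrict to the fundamental interval $(\theta_{F}-l/(2g_{F}),\theta_{F})$, where $0<\bar{\theta}_{F}^{(t)}<\pi$. There $|\delta_{F}|<1$ and strict monotonicity of $\cos$ give a unique $t_{\min}$ with $\delta_{F}+\cos\bar{\theta}_{F}^{(t_{\min})}=0$; then apply Corollary~\ref{cor:equifocal_minimal} and substitute into \eqref{eqn:A(t)_formula}. Your remark that leaves outside this interval are focal or repeat only makes explicit what the paper leaves implicit. One caution on your side check: the displayed corollary with parameter $c$ contains a misprint ($c^{2}\delta_{F}$ should read $c\,\delta_{F}^{2}$), so setting $H=0$ there returns $(1-2\kappa\epsilon_{F})/(2\epsilon_{F})$ independently of $c$ only after that correction, but your main computation does not rely on it.
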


\begin{proof}
For $t\in(\theta_{F}-(l/2g_{F}), \theta_{F})$,
we have $0<\bar{\theta}_{F}^{(t)}<\pi$.
Hence, there exists a unique 
$t\in(\theta_{F}-(l/2g_{F}), \theta_{F})$
satisfying $\delta_{F}+\cos\bar{\theta}_{F}^{(t)}=0$,
which we write $t=t_{\min}$.
It follows from Corollary \ref{cor:equifocal_minimal}
that $M_{t_{\min}\cdot\bs{\xi}}$ is
a minimal equifocal hypersurface.
The assertion (iii) also follows immediately from \eqref{eqn:A(t)_formula}.
Thus we have the assertion.
\end{proof}

Corollary \ref{cor:minimal_FRA_ca_equi}
shows that the constant $\epsilon_{F}$ can take values only in the range
\begin{equation}\label{eqn:epF_range}
0<\epsilon_{F} \leq \dfrac{1}{2\kappa}.
\end{equation}

\begin{cor}
The minimal equifocal hypersurface $M_{\min}$
is totally geodesic
if and only if
$2\kappa\epsilon_{F}=1$ holds.
\end{cor}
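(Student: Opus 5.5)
The plan is to reduce the statement to the formula for $\|A^{\min}\|^{2}$ in Corollary \ref{cor:minimal_FRA_ca_equi}, (iii), together with the fact that a hypersurface is totally geodesic exactly when its shape operator vanishes identically. Since $M_{\min}$ is itself an equifocal hypersurface, the formula \eqref{eqn:A(t)_formula} gives that $\|A^{\min}\|^{2}$ is constant on $M_{\min}$. By Corollary \ref{cor:minimal_FRA_ca_equi}, (iii), this constant equals $(1-2\kappa\epsilon_{F})/(2\epsilon_{F})$. The constant $\epsilon_{F}$ is the same for all parallel hypersurfaces, because the tangential focal data \eqref{eqn:focaldata_parallel} only shift $\theta_{F}$ and leave $l/(2g_{F})$, $m_{1}^{F}$ and $m_{2}^{F}$ unchanged.

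First I would handle the direction assuming $M_{\min}$ is totally geodesic. Then $A^{\min}\equiv 0$, so $\|A^{\min}\|^{2}=0$. Since $\epsilon_{F}>0$, the formula forces $1-2\kappa\epsilon_{F}=0$.

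For the converse, assume $2\kappa\epsilon_{F}=1$. Then $\|A^{\min}\|^{2}=0$ at every point of $M_{\min}$. The squared Hilbert--Schmidt norm of a self-adjoint operator is the sum of the squares of its eigenvalues, so it vanishes only when the operator is zero. Hence $A^{\min}_{\bs{\xi}}=0$ for the unit normal, and since $M_{\min}$ is a hypersurface this means the full second fundamental form vanishes. Therefore $M_{\min}$ is totally geodesic.

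There is no real obstacle here. The only point to check is that Corollary \ref{cor:minimal_FRA_ca_equi}, (iii) is evaluated at $t=t_{\min}$: there $\delta_{F}=-\cos\bar{\theta}_{F}^{(t_{\min})}$, so $1+\delta_{F}\cos\bar{\theta}_{F}^{(t_{\min})}=\sin^{2}\bar{\theta}_{F}^{(t_{\min})}$. This cancellation is what yields the constant $(1-2\kappa\epsilon_{F})/(2\epsilon_{F})$ used above, and the same computation gives the bound \eqref{eqn:epF_range}.
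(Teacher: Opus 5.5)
Your proposal is correct and takes the same approach as the paper. The paper gives no separate proof; the corollary follows from Corollary~\ref{cor:minimal_FRA_ca_equi}(iii), $\|A^{\min}\|^{2}=(1-2\kappa\epsilon_{F})/(2\epsilon_{F})$ with $\epsilon_{F}>0$, together with the fact that a hypersurface is totally geodesic exactly when its shape operator vanishes. Your two checks (the cancellation $1+\delta_{F}\cos\bar{\theta}_{F}^{(t_{\min})}=\sin^{2}\bar{\theta}_{F}^{(t_{\min})}$, and that $\epsilon_{F}$ is unchanged for parallel hypersurfaces) are the only details needed for that deduction.
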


\begin{rem}\label{rem:sphere_ep_g}
Let $S^{n+1}(r)$ be the standard sphere of radius $r$ in $\mathbb{R}^{n+2}$.
Let us consider the case when $M$ is an isoparametric hypersurface in $S^{n+1}(r)$.
Then the Einstein constant $\kappa$ of $S^{n+1}(r)$ is equal to $n/r^{2}$.
We also have $l=2\pi r$.
In addition, 
by $2n=g_{F}(m_{1}^{F}+m_{2}^{F})$ (cf. \cite[(4.9)]{LiuTe20}),
we get $2\kappa\epsilon_{F}=g_{F}^{-1}$,
from which
$2\kappa\epsilon_{F}=1$ is equivalent to $g_{F}=1$.
It is known that any isoparametric hypersurface with $g_{F}=1$
is precisely a hypersphere in $S^{n+1}$.
\end{rem}

\section{Backward mean curvature flows starting from equifocal hypersurfaces}\label{sec:FRA_MCF}

In this section,
we study the mean curvature flows
and the backward mean curvature flows
starting from equifocal hypersurfaces.
We begin with a review of the ODE obtained by reducing the mean curvature equation
using the symmetry of equifocal hypersurfaces due to \cite{Koike11} (see \eqref{eqn:MCF_ODE} below).
Let $N$ be a simply-connected irreducible symmetric space of compact type
and $\kappa$ be the Einstein constant of $N$.
Let $M$ be an equifocal hypersurface in $N$.
Let $\bs{\xi}$ be a unit normal vector field on $M$.
Fix $x\in M$
and let $\gamma_{\bs{\xi}_{x}}$
denote the normal geodesic of $M$
satisfying 
$\gamma_{\bs{\xi}_{x}}(0)=x$
and $\gamma_{\bs{\xi}_{x}}'(0)=\bs{\xi}_{x}$.
Let $\widetilde{C}\subset T^{\perp}_{x}M$
denote the fundamental domain of the affine Coxeter group of $M$ at $x$,
as given in \eqref{eqn:Coxeter_domain}.
Let $\bs{\xi}(t)=\xi(t)\bs{\xi}_{x}$
be the maximal integral curve in $\widetilde{C}$
defined on the interval $t\in(-T_{\min},T_{\max})$
such that it satisfies the following ODE:
 \begin{equation}\label{eqn:MCF_ODE}
\bs{\xi}'(t)=\bs{H}(\xi(t))_{\gamma_{\bs{\xi}_{x}}(\xi(t))},
\end{equation}
where $\bs{H}(\xi(t))$ denotes the mean curvature vector of the parallel
hypersurface $M_{\tilde{\bs{\xi}}(t)}$ and
$\tilde{\bs{\xi}}(t)$ denotes the parallel normal vector field on $M$ satisfying
$\tilde{\bs{\xi}}(t)_{x}=\bs{\xi}(t)$.
Here, we have used the identification of $T_{\gamma_{\bs{\xi}_{x}}(\xi(t))}M_{\tilde{\bs{\xi}}(t)}$
with $T_{x}M$ via the parallel transformation along $\gamma_{\bs{\xi}_{x}}$.
Then, by \cite{Koike11}, $\{M_{\tilde{\bs{\xi}}(t)}\}_{t\in[0,T_{\max})}$ gives a solution
of the mean flow starting from $M$.
In addition, by \cite{LiuRa22},
$\{M_{\tilde{\bs{\xi}}(-t)}\}_{t\in[0,T_{\min})}$
gives a long-time solution of the backward mean curvature flow stating from $M$.
In particular, $T_{\min}=\infty$ holds.

\begin{pro}\label{pro:t-infty_costheta}
The following equality holds:
\begin{equation}
\delta_{F}+\bar{\theta}_{F}^{(\xi(t))}
=\exp(\dfrac{1}{2\epsilon_{F}}\cdot t)(\delta_{F}+\cos\bar{\theta}_{F}).
\end{equation}
\end{pro}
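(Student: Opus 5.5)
The statement should be read with $\cos\bar{\theta}_{F}^{(\xi(t))}$ in place of $\bar{\theta}_{F}^{(\xi(t))}$ on the left-hand side, so that $t=0$ gives the identity $\delta_{F}+\cos\bar{\theta}_{F}=\delta_{F}+\cos\bar{\theta}_{F}$. The plan is to turn the ODE \eqref{eqn:MCF_ODE} into a scalar linear ODE for $f(t):=\delta_{F}+\cos\bar{\theta}_{F}^{(\xi(t))}$ and integrate it. Since $\bs{\xi}(t)=\xi(t)\bs{\xi}_{x}$ stays on the line $\mathbb{R}\bs{\xi}_{x}$, the parallel hypersurface $M_{\tilde{\bs{\xi}}(t)}$ is $M_{\xi(t)\cdot\bs{\xi}}$. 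Hence \eqref{eqn:MCF_ODE} reduces to $\xi'(t)=H(\xi(t))$, where $H(s)$ is the scalar mean curvature of $M_{s\cdot\bs{\xi}}$ with respect to $\bs{\xi}^{s}$. Fixing the sign convention for the mean curvature vector relative to the unit normal $\gamma'_{\bs{\xi}_{x}}$ is the one delicate point, and I discuss it below. The initial condition is $\xi(0)=0$, because the flow starts at $M_{0}=M$.

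Next I substitute the formula \eqref{eqn:H(t)_formula} of Proposition \ref{pro:HtAtnorm}. Put $u(t):=\bar{\theta}_{F}^{(\xi(t))}=\pi\frac{2g_{F}}{l}(\theta_{F}-\xi(t))$. This lies in $(0,\pi)$ because $\bs{\xi}(t)\in\widetilde{C}$. Then
\begin{equation}
u'(t)=-\pi\frac{2g_{F}}{l}\,\xi'(t)=-\pi\frac{2g_{F}}{l}\cdot\frac{\pi g_{F}(m^{F}_{1}+m^{F}_{2})}{l\sin u(t)}\bigl(\delta_{F}+\cos u(t)\bigr).
\end{equation}
Multiplying by $-\sin u(t)$ removes the singular factor $1/\sin u$:
\begin{equation}
f'(t)=\frac{d}{dt}\cos u(t)=\frac{2\pi^{2}g_{F}^{2}(m^{F}_{1}+m^{F}_{2})}{l^{2}}\,f(t)=\frac{1}{2\epsilon_{F}}\,f(t),
\end{equation}
using $\epsilon_{F}^{-1}=\pi^{2}(2g_{F}/l)^{2}(m^{F}_{1}+m^{F}_{2})=4\pi^{2}g_{F}^{2}(m^{F}_{1}+m^{F}_{2})/l^{2}$. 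This linear ODE integrates to $f(t)=e^{t/(2\epsilon_{F})}f(0)$. Since $f(0)=\delta_{F}+\cos\bar{\theta}_{F}$ by $\xi(0)=0$, this gives the claim on the whole interval $(-T_{\min},T_{\max})$.

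The main obstacle is the sign bookkeeping: I must check that, with the paper's convention $H=\tr A_{\bs{\xi}}$ and $J'_{X}=-A J_{X}$, the velocity $\xi'$ equals $+H$ and not $-H$. If the sign were opposite, the exponent would be $-t/(2\epsilon_{F})$, which contradicts both the stated decay as $t\to-\infty$ and Theorem \ref{thm:BMCF_evaluate}(i).

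I would check this sign on a model case. Take a small geodesic sphere $M$ in $S^{n+1}$, with $g_{F}=1$ and $\bs{\xi}$ pointing toward its nearby focal point at distance $\theta_{F}$. By the paper's conventions $H>0$ for $\theta_{F}$ small. Mean curvature flow shrinks $M$ toward that focal point, so $\xi$ increases and $\xi'=+H$. Consistently, $\theta_{F}\to0$ forces $u\to0$, hence $\cos u\to1$ and $f$ increases, matching the positive exponent.
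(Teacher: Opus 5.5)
Your proposal is correct and matches the paper's proof: reduce \eqref{eqn:MCF_ODE} to $\xi'(t)=H(\xi(t))$, substitute \eqref{eqn:H(t)_formula}, note that differentiating $\delta_{F}+\cos\bar{\theta}_{F}^{(\xi(t))}$ cancels the $1/\sin$ factor and gives a linear equation with rate $1/(2\epsilon_{F})$, then integrate from $\xi(0)=0$. You are also right that the left-hand side should carry a $\cos$, as the paper's own computation shows, and your small-geodesic-sphere check of the sign $\xi'=+H$ is a sound addition to a step the paper takes for granted.
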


\begin{proof}
Let $H(\xi(t))=\INN{\bs{H}(\xi(t))}{\gamma_{\bs{\xi}_{x}}'(t)}$ denote the mean curvature of
$M_{\tilde{\bs{\xi}}(t)}$.
It follows from \eqref{eqn:MCF_ODE}
that $\xi'(t)=H(\xi(t))$ holds.
By using \eqref{eqn:H(t)_formula},
we have the following equality:
\begin{equation}
H(\xi(t))
=\dfrac{\pi g_{F}(m^{F}_{1}+m^{F}_{2})}{l \cdot \sin\bar{\theta}_{F}^{(\xi(t))}}
(\delta_{F}+\cos\bar{\theta}_{F}^{(\xi(t))}),
\end{equation}
from which we get
\begin{align}
(\delta_{F}+\cos\bar{\theta}_{F}^{(\xi(t))})'
&=-\sin\bar{\theta}_{F}^{(\xi(t))}
\cdot \pi\dfrac{2g_{F}}{l}\cdot(-\xi'(t))\\
&=
\sin\bar{\theta}_{F}^{(\xi(t))}\cdot
\pi\dfrac{2g_{F}}{l}\cdot
\dfrac{\pi g_{F}(m^{F}_{1}+m^{F}_{2})}{l \cdot \sin\bar{\theta}_{F}^{(\xi(t))}}
(\delta_{F}+\cos\bar{\theta}_{F}^{(\xi(t))})\\
&=\dfrac{1}{2\epsilon_{F}}(\delta_{F}+\cos\bar{\theta}_{F}^{(\xi(t))}). \label{eqn:dfcostfx'}
\end{align}
By integrating the both sides of \eqref{eqn:dfcostfx'},
we get the assertion.
\end{proof}

We observe that,
by Proposition \ref{pro:t-infty_costheta},
the following equality holds:
\begin{equation}\label{eqn:cos_t2-inf}
\lim_{t\to -\infty}\cos\bar{\theta}_{F}^{(\xi(t))}=-\delta_{F}.
\end{equation}
This implies that
$M_{\tilde{\bs{\xi}}(t)}$
converges to $M_{\min}$
as $t\to-\infty$.
In what follows,
we study the behavior of $H(\xi(t))$
and $\|A(\xi(t))\|^{2}$ as $t\to -\infty$.

\begin{pro}\label{pro:FRA_Ht_-inf}
The following equality holds:
\begin{equation}
\lim_{t\to-\infty}
H(\xi(t))^{2}\exp(-\dfrac{1}{\epsilon_{F}}\cdot t)
=\dfrac{m_{1}^{F}+m_{2}^{F}}{4\epsilon_{F}(1-\delta_{F}^{2})}
(\delta_{F}+\cos\bar{\theta}_{F})^{2}.
\end{equation}
\end{pro}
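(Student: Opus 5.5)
The plan is to compute $H(\xi(t))^{2}$ directly from the explicit formula \eqref{eqn:H(t)_formula} of Proposition \ref{pro:HtAtnorm}, applied to the parallel hypersurface $M_{\tilde{\bs{\xi}}(t)}$, and then substitute the exponential law of Proposition \ref{pro:t-infty_costheta}. The integral curve starts at $\xi(0)=0$, so $\bar{\theta}_{F}^{(\xi(0))}=\bar{\theta}_{F}$. I read Proposition \ref{pro:t-infty_costheta} as
\begin{equation}
\delta_{F}+\cos\bar{\theta}_{F}^{(\xi(t))}=\exp\Bigl(\dfrac{t}{2\epsilon_{F}}\Bigr)(\delta_{F}+\cos\bar{\theta}_{F}),
\end{equation}
since its statement evidently omits a $\cos$. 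Squaring \eqref{eqn:H(t)_formula} and multiplying by $\exp(-t/\epsilon_{F})$, the exponential factors cancel exactly, giving
\begin{equation}
H(\xi(t))^{2}\exp\Bigl(-\dfrac{t}{\epsilon_{F}}\Bigr)
=\Bigl(\dfrac{\pi g_{F}(m_{1}^{F}+m_{2}^{F})}{l}\Bigr)^{2}
\dfrac{(\delta_{F}+\cos\bar{\theta}_{F})^{2}}{\sin^{2}\bar{\theta}_{F}^{(\xi(t))}} .
\end{equation}
The only $t$-dependence left is in the denominator.

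Next I take the limit $t\to-\infty$. By \eqref{eqn:cos_t2-inf}, $\cos\bar{\theta}_{F}^{(\xi(t))}\to-\delta_{F}$. Hence $\sin^{2}\bar{\theta}_{F}^{(\xi(t))}\to 1-\delta_{F}^{2}$. This limit is strictly positive because $-1<\delta_{F}<1$. Moreover, $\xi(t)$ stays in the fundamental domain $\widetilde{C}$, so $\bar{\theta}_{F}^{(\xi(t))}\in(0,\pi)$ and no sign or zero-division issue arises along the flow. The limit therefore equals $(\pi g_{F}(m_{1}^{F}+m_{2}^{F})/l)^{2}(\delta_{F}+\cos\bar{\theta}_{F})^{2}/(1-\delta_{F}^{2})$.

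The last step is to match the constant with the one in the statement. From the definition of $\epsilon_{F}$ we have $1/\epsilon_{F}=4\pi^{2}g_{F}^{2}(m_{1}^{F}+m_{2}^{F})/l^{2}$. Therefore $(m_{1}^{F}+m_{2}^{F})/(4\epsilon_{F})=\pi^{2}g_{F}^{2}(m_{1}^{F}+m_{2}^{F})^{2}/l^{2}$, which is exactly the prefactor obtained above.

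There is no real obstacle here; the argument is a direct substitution. The only point needing care is that Proposition \ref{pro:t-infty_costheta} and the limit \eqref{eqn:cos_t2-inf} are applied along the backward solution, which exists for all $t<0$ by Liu--Radeschi.
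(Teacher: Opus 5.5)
Your proposal is correct and matches the paper's proof: square \eqref{eqn:H(t)_formula} along $\xi(t)$, and use Proposition \ref{pro:t-infty_costheta} (rightly read with the missing $\cos$) to cancel the exponential factor. Then pass to the limit via \eqref{eqn:cos_t2-inf}, where $\sin^{2}\bar{\theta}_{F}^{(\xi(t))}\to 1-\delta_{F}^{2}>0$, with your constant check $(m_{1}^{F}+m_{2}^{F})/(4\epsilon_{F})=\pi^{2}g_{F}^{2}(m_{1}^{F}+m_{2}^{F})^{2}/l^{2}$ being the identification the paper makes without comment.
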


\begin{proof}
By using \eqref{eqn:H(t)_formula} and Proposition \ref{pro:t-infty_costheta},
we get
\begin{align}
H(\xi(t))^{2}\exp(-\dfrac{1}{\epsilon_{F}}\cdot t)
&=\dfrac{\pi^{2} g_{F}^{2}(m^{F}_{1}+m^{F}_{2})^{2}}{l^{2} \cdot \sin^{2}\bar{\theta}_{F}^{(\xi(t))}}
(\delta_{F}+\cos\bar{\theta}_{F}^{(\xi(t))})^{2}\exp(-\dfrac{1}{\epsilon_{F}}\cdot t)\\
&=\dfrac{m_{1}^{F}+m_{2}^{F}}{4\epsilon_{F}(1-\cos^{2}\bar{\theta}_{F}^{(\xi(t))})}
(\delta_{F}+\cos\bar{\theta}_{F})^{2}.
\end{align}
Therefore, by taking the limit as $t\to -\infty$ on both sides, we obtain the assertion.
\end{proof}

\begin{pro}\label{pro:FRA_At_-inf}
The following equality holds:
\begin{equation}
\lim_{t\to-\infty}\|A(\xi(t))\|^{2}
=\dfrac{1-2\kappa\epsilon_{F}}{2\epsilon_{F}}.
\end{equation}
\end{pro}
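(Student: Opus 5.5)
The plan is to substitute the explicit formula \eqref{eqn:A(t)_formula} of Proposition \ref{pro:HtAtnorm} along the integral curve $\xi(t)$ and then pass to the limit using \eqref{eqn:cos_t2-inf}. Concretely, Proposition \ref{pro:HtAtnorm} applied to the parallel hypersurface $M_{\tilde{\bs{\xi}}(t)}$ gives
\begin{equation}
\|A(\xi(t))\|^{2}
=\dfrac{1}{2\epsilon_{F}\cdot \sin^{2}\bar{\theta}_{F}^{(\xi(t))}}
\bigl(1+\delta_{F}\cos\bar{\theta}_{F}^{(\xi(t))}\bigr)-\kappa,
\end{equation}
where I rewrite $\sin^{2}\bar{\theta}_{F}^{(\xi(t))}=1-\cos^{2}\bar{\theta}_{F}^{(\xi(t))}$. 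The quantity $\|A(\xi(t))\|^{2}$ thereby becomes a rational function of $c(t):=\cos\bar{\theta}_{F}^{(\xi(t))}$ alone.

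Next I use the exponential formula of Proposition \ref{pro:t-infty_costheta}, in the form \eqref{eqn:cos_t2-inf}: $c(t)\to-\delta_{F}$ as $t\to-\infty$. Since $-1<\delta_{F}<1$, the limiting denominator $1-\delta_{F}^{2}$ is strictly positive. The rational function is therefore continuous at $c=-\delta_{F}$, and the limit is obtained by plugging in:
\begin{equation}
\lim_{t\to-\infty}\|A(\xi(t))\|^{2}
=\dfrac{1-\delta_{F}^{2}}{2\epsilon_{F}(1-\delta_{F}^{2})}-\kappa
=\dfrac{1-2\kappa\epsilon_{F}}{2\epsilon_{F}}.
\end{equation}
This agrees with Corollary \ref{cor:minimal_FRA_ca_equi}(iii), which is consistent with the flow converging to $M_{\min}$.

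The only point needing care is legitimacy of the substitution. We need $\xi(t)$ to stay inside the fundamental domain $\widetilde{C}$ for all $t<0$, so that $0<\bar{\theta}_{F}^{(\xi(t))}<\pi$ and Proposition \ref{pro:HtAtnorm} applies. This follows from the long-time existence result of \cite{LiuRa22} ($T_{\min}=\infty$), together with the fact that $\xi(t)$ is by definition an integral curve in $\widetilde{C}$.

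We also need the limit of $c(t)$ to be taken with the correct sign. Proposition \ref{pro:t-infty_costheta} as displayed contains a typo: $\bar{\theta}_{F}^{(\xi(t))}$ should read $\cos\bar{\theta}_{F}^{(\xi(t))}$. I would use the corrected form, obtained by integrating \eqref{eqn:dfcostfx'}:
\begin{equation}
\delta_{F}+c(t)=e^{t/(2\epsilon_{F})}\bigl(\delta_{F}+\cos\bar{\theta}_{F}\bigr).
\end{equation}
The right-hand side tends to $0$ as $t\to-\infty$ because $\epsilon_{F}>0$, which gives $c(t)\to-\delta_{F}$. I expect no real obstacle beyond this bookkeeping.
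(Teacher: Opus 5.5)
Your proposal is correct and matches the paper's proof: substitute \eqref{eqn:A(t)_formula} with $\sin^{2}=1-\cos^{2}$, then pass to the limit using $\cos\bar{\theta}_{F}^{(\xi(t))}\to-\delta_{F}$, which is legitimate because $1-\delta_{F}^{2}>0$. You are also right that the display in Proposition \ref{pro:t-infty_costheta} is missing a $\cos$, and your corrected form $\delta_{F}+\cos\bar{\theta}_{F}^{(\xi(t))}=e^{t/(2\epsilon_{F})}(\delta_{F}+\cos\bar{\theta}_{F})$ is what the paper's own computation actually yields.
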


\begin{proof}
It follows from \eqref{eqn:A(t)_formula} that
the following equality holds:
\begin{equation}
\|A(\xi(t))\|^{2}
=
\dfrac{1}{2\epsilon_{F}(1-\cos^{2}\bar{\theta}_{F}^{(\xi(t))})}
(1+\delta_{F}\cos\bar{\theta}_{F}^{(\xi(t))})-\kappa.
\end{equation}
As $t\to -\infty$,
a direct calculation by using
\eqref{eqn:cos_t2-inf} shows the assertion.
\end{proof}

\begin{rem}
Let us consider the case when $M$ is a isoparametric hypersurface in the $(n+1)$-dimensional standard sphere $S^{n+1}(r)$.
It follows from Remark \ref{rem:sphere_ep_g} that $2\kappa\epsilon_{F}=1$ yields $g_{F}=1$.
Furthermore, this implies that $m_{1}^{F}=m_{2}^{F}$ holds.
Thus, $\delta_{F}=0$ holds.
\end{rem}

\begin{pro}\label{pro:eF1_deltaF0}
Let $M$ be an equifocal hypersurface
in $N$.
Then $2\kappa\epsilon_{F}=1$ implies $\delta_{F}=0$.
\end{pro}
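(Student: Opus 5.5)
We have to show that $2\kappa\epsilon_{F}=1$ forces $\delta_{F}=0$. The plan is to compute $\|A^{\min}\|^{2}$ directly from Theorem \ref{thm:equifocal_HA2_formula} (ii) on the minimal parallel hypersurface and compare it with the expression $(1-2\kappa\epsilon_{F})/(2\epsilon_{F})$ recorded in Corollary \ref{cor:minimal_FRA_ca_equi} (iii). Under the hypothesis $2\kappa\epsilon_{F}=1$ this expression vanishes, so $M_{\min}$ is totally geodesic. In particular the shape operator of $M_{\min}$ is identically zero.

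We first recompute $\|A^{\min}\|^{2}$ to see exactly where $\delta_{F}$ enters. On $M_{\min}$ we have $\cos\bar\theta^{(t_{\min})}_{F}=-\delta_{F}$, so $\sin^{2}\bar\theta^{(t_{\min})}_{F}=1-\delta_{F}^{2}$. Substituting into \eqref{eqn:A(t)_formula} gives
\begin{equation}
\|A^{\min}\|^{2}=\frac{1-\delta_{F}^{2}}{2\epsilon_{F}(1-\delta_{F}^{2})}-\kappa=\frac{1}{2\epsilon_{F}}-\kappa .
\end{equation}
This is identically the value in Corollary \ref{cor:minimal_FRA_ca_equi} (iii), so the norm formula by itself says nothing about $\delta_{F}$. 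We therefore need finer information than the squared norm: we use the eigenvalues of the shape operator, which are all zero on $M_{\min}$.

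The main step is to exploit the fact that $M_{\min}$ is totally geodesic, so that every parallel hypersurface $M_{t\bs{\xi}}$ is a tube around it. We then use the Jacobi-field description of $A(t)$ from Lemma \ref{lem:DdtA_A2+Rxi}. Along the normal geodesic of a totally geodesic hypersurface, with initial condition $A=0$, the Riccati equation $A'=A^{2}+R_{\gamma'}$ has a solution determined entirely by the curvature operator $R_{\gamma'}$. Since $N$ is symmetric, $R_{\gamma'}$ is parallel along $\gamma$. Its eigenvalues $\mu_{i}\ge 0$ give principal curvatures of the form $\sqrt{\mu_{i}}\tan(\sqrt{\mu_{i}}\,s)$ (up to sign), and these are symmetric under $s\mapsto -s$.

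Hence the two sides of $M_{\min}$ are isometric in the following sense. The first focal distance on the positive side equals the first focal distance on the negative side, and the multiplicities agree: each is $\#\{i:\sqrt{\mu_{i}}s=\pi/2\}$ at the first such $s$. In terms of the tangential focal data of $M_{\min}$, the focal normals at $\theta_{F}-t_{\min}$ (multiplicity $m_{1}^{F}$) and at $\theta_{F}-t_{\min}-l/(2g_{F})$ (multiplicity $m_{2}^{F}$) are mirror images. This forces $m_{1}^{F}=m_{2}^{F}$, that is $\delta_{F}=0$.

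The main obstacle is matching the adjacent focal points of the tube with $t_{1,0}$ and $t_{2,0}$ and their multiplicities. We also need to check that the eigenvalues $\mu_{i}=0$ (flat directions) do not contribute focal points. For this we appeal to Theorem \ref{thm_TeTh95_thm1.6.b} (ii) and to the fact that $\widetilde C$ is the fundamental domain.

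As a consistency check, the same conclusion follows from the formula in Theorem \ref{thm:equifocal_HA2_formula} (i). By the symmetry of the tube, $H$ is odd about $t_{\min}$, hence $\cos\bar\theta^{(t_{\min})}_{F}$ must equal $0$, i.e.\ $t_{\min}$ lies at the midpoint of $\widetilde C$. Combined with $\cos\bar\theta^{(t_{\min})}_{F}=-\delta_{F}$, this again gives $\delta_{F}=0$.
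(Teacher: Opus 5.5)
Your argument is correct, and it takes a genuinely different route from the paper's.

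The paper stays inside the closed formulas of Theorem \ref{thm:equifocal_HA2_formula}. It takes non-minimal parallel hypersurfaces $M_{s_1\bs{\xi}}$ and $M_{s_2\bs{\xi}}$ on opposite sides of $M_{\min}$, runs the backward flow from each, and uses that for $2\kappa\epsilon_F=1$ one has $\|A\|^2/H^2=\frac{2}{m^F_1+m^F_2}\cdot\frac{\cos\bar\theta}{\delta_F+\cos\bar\theta}$. Proposition \ref{pro:t-infty_costheta} then gives the limits $-\frac{2}{m^F_1+m^F_2}\cdot\frac{\delta_F}{\delta_F+\cos\bar\theta_F^{(s_i)}}$ of $\frac{\|A\|^2}{H^2}\exp(\frac{t}{2\epsilon_F})$. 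The contradiction it extracts is in effect a sign contradiction: these limits must be $\ge 0$, but they have opposite signs for $i=1,2$ when $\delta_F\neq 0$. In fact the flow is inessential. For $2\kappa\epsilon_F=1$, \eqref{eqn:A(t)_formula} reads $\|A(t)\|^2=\cos\bar\theta_F^{(t)}(\delta_F+\cos\bar\theta_F^{(t)})/(2\epsilon_F\sin^2\bar\theta_F^{(t)})$, which would be negative on the parallel hypersurfaces with $\cos\bar\theta_F^{(t)}$ strictly between $-\delta_F$ and $0$.

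You instead use the geometric meaning of $2\kappa\epsilon_F=1$, namely that $M_{\min}$ is totally geodesic, together with $\nabla R=0$. The $M_{\min}$-Jacobi fields are $\cos(\sqrt{\mu_i}\,s)$ times parallel eigenvectors of $R_{\gamma'}$ for all $s\in\mathbb{R}$. Hence the focal set along each normal line, with multiplicities, is invariant under $s\mapsto -s$. You then compare with the alternating pattern of Theorem \ref{thm_TeTh95_thm1.6.b} for $\Gamma^F(M_{t_{\min}\bs{\xi}})$. The lemma in Section \ref{sec:parallel_FRA} is what justifies your labels: multiplicity $m^F_1$ at $\theta_F-t_{\min}$ and $m^F_2$ at $\theta_F-t_{\min}-l/(2g_F)$. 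This gives $m^F_1=m^F_2$.

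Your route needs a bit more geometry (Jacobi fields in a symmetric space rather than only the trace formulas), but it buys several things. It is flow-free. It explains why the condition forces $\delta_F=0$: a totally geodesic hypersurface has reflection-symmetric focal data. It also yields the extra fact $\theta_F-t_{\min}=l/(4g_F)$, i.e.\ $M_{\min}$ sits at the midpoint of $\widetilde C$.
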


\begin{proof}
Assume that $2\kappa\epsilon_{F}=1$.
We get $\{\cos\bar{\theta}_{F}^{(s)}\mid s\in (\theta_{F}-l/(2g_{F}),\theta_{F})\}=(-1,1)$.
By $-1<\delta_{F}<1$, there exist $s_{1},s_{2}\in(\theta_{F}-l/(2g_{F}),\theta_{F})$
satisfying
\begin{equation}\label{eqn:d+cos12}
\delta_{F}+\cos\bar{\theta}_{F}^{(s_{1})}>0,\quad
\delta_{F}+\cos\bar{\theta}_{F}^{(s_{2})}<0,
\end{equation}
respectively.
Then the parallel hypersurfaces
$M_{1}:=M_{s_{1}\cdot\bs{\xi}}$
and
$M_{2}:=M_{s_{2}\cdot\bs{\xi}}$
are non-minimal equifocal hypersurfaces.
For $i=1,2$,
let $\{M_{i}^{(t)}\}_{t\in(-\infty,T_{i})}$
denote the maximal solution
of MCF
starting from $M_{i}$, where $T_{i}$ is a positive real number.
We write $\Gamma^{F}(M_{i}^{(t)})=\{(\theta_{F,i}(t),l/(2g_{F}),\{m_{1}^{F},m_{2}^{F}\};2g_{F})\}$
as the tangential focal data of $M_{i}^{(t)}$.
We put $\bar{\theta}_{F,i}(t):=\pi (2g_{F}/l)\theta_{F,i}(t)$.
Then $M_{i}^{(0)}=M_{i}$ yields $\bar{\theta}_{F,i}(0)=\bar{\theta}_{F}^{(s_{i})}$.
It follows from the uniqueness of the minimal parallel hypersurface of $M$
that
$M_{1}^{(t)}$
and $M_{2}^{(t)}$
converge to $M_{\min}$ as $t\to -\infty$.
Hence we have $\cos(\bar{\theta}_{F,i}(t))\to-\delta_{F}$ as $t\to-\infty$.
Let $A_{i}(t)$ and $H_{i}(t)$ denote
the shape operator and mean curvature of $M_{i}^{(t)}$, respectively.
By using $2\kappa\epsilon_{F}=1$, we get
\begin{equation}
\|A_{i}(t)\|^{2}
=\dfrac{1}{2\epsilon_{F}\cdot\sin^{2}(\bar{\theta}_{F,i}(t))}
\cos(\bar{\theta}_{F,i}(t))
(\delta_{F}+\cos(\bar{\theta}_{F,i}(t))),
\end{equation}
from which we have
\begin{equation}
\dfrac{\|A_{i}(t)\|^{2}}{H_{i}(t)^{2}}
=
\dfrac{2}{m_{1}^{F}+m_{2}^{F}}\cdot\dfrac{\cos(\bar{\theta}_{F,i}(t))}{\delta_{F}+\cos(\bar{\theta}_{F,i}(t))}.
\end{equation}
By using Proposition \ref{pro:t-infty_costheta},
we have
\begin{align}
\lim_{t\to-\infty}\dfrac{\|A_{i}(t)\|^{2}}{H_{i}(t)^{2}}\exp(\dfrac{1}{2\epsilon_{F}}\cdot t)
&=\dfrac{2}{m_{1}^{F}+m_{2}^{F}}\cdot\dfrac{1}{\delta_{F}+\cos\bar{\theta}_{F}^{(s_{i})}}
\lim_{t\to-\infty}\cos(\bar{\theta}_{F,i}(t))\\
&=-\dfrac{2}{m_{1}^{F}+m_{2}^{F}}\cdot\dfrac{\delta_{F}}{\delta_{F}+\cos\bar{\theta}_{F}^{(s_{i})}}.\label{eqn:-d_d+cos}
\end{align}
Suppose that $\delta_{F}\neq 0$.
Then \eqref{eqn:-d_d+cos} implies that
$\delta_{F}+\cos\bar{\theta}_{F}^{(s_{1})}=\delta_{F}+\cos\bar{\theta}_{F}^{(s_{2})}$.
This contradicts \eqref{eqn:d+cos12}.
Hence we have $\delta_{F}=0$.
\end{proof}

\begin{thm}\label{thm:FRA_At_Ht_ratio}
Assume that the initial equifocal hypersurface $M$ is not minimal.
Then we obtain the following assertions:
\begin{enumerate}
\item[(i)] In the case of $2\kappa\epsilon_{F}=1$,
we have
\begin{equation}
\dfrac{\|A(\xi(t))\|^{2}}{H(\xi(t))^{2}}\equiv
\dfrac{2}{m_{1}^{F}+m_{2}^{F}}.
\end{equation}
\item[(ii)] Otherwise,
the following inequality holds:
\begin{equation}\label{eqn:main3}
\dfrac{\|A(\xi(t))\|^{2}}{H(\xi(t))^{2}}
<\dfrac{4\max\{m_{1}^{F},m_{2}^{F}\}}{(m^{F}_{1}+m^{F}_{2})^{2}}
\cdot\dfrac{1}{(\delta_{F}+\cos\bar{\theta}_{F})^{2}}\cdot
\exp(-\dfrac{1}{\epsilon_{F}}\cdot t).
\end{equation}
\end{enumerate}
\end{thm}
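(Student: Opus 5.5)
The plan is to write the ratio $\|A(\xi(t))\|^{2}/H(\xi(t))^{2}$ explicitly in terms of $c(t):=\cos\bar{\theta}_{F}^{(\xi(t))}$, using Proposition \ref{pro:HtAtnorm}. Then I will control the denominator with Proposition \ref{pro:t-infty_costheta}, which I read as $\delta_{F}+\cos\bar{\theta}_{F}^{(\xi(t))}=\exp(\tfrac{1}{2\epsilon_{F}}t)(\delta_{F}+\cos\bar{\theta}_{F})$; its displayed statement omits the $\cos$, but its proof gives this.

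\textbf{Step 1: the ratio formula.} From the definition of $\epsilon_{F}$ we have
\begin{equation}
\pi^{2}g_{F}^{2}(m_{1}^{F}+m_{2}^{F})^{2}/l^{2}=(m_{1}^{F}+m_{2}^{F})/(4\epsilon_{F}).
\end{equation}
Hence \eqref{eqn:H(t)_formula} gives
\begin{equation}
H(\xi(t))^{2}=\dfrac{m_{1}^{F}+m_{2}^{F}}{4\epsilon_{F}\sin^{2}\bar{\theta}_{F}^{(\xi(t))}}(\delta_{F}+c(t))^{2}.
\end{equation}
Dividing \eqref{eqn:A(t)_formula} by this yields
\begin{equation}
\dfrac{\|A(\xi(t))\|^{2}}{H(\xi(t))^{2}}=\dfrac{2(1+\delta_{F}c(t))-4\kappa\epsilon_{F}(1-c(t)^{2})}{(m_{1}^{F}+m_{2}^{F})(\delta_{F}+c(t))^{2}}.
\end{equation}
This is legitimate because $M$ is not minimal. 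Then $\delta_{F}+\cos\bar{\theta}_{F}\neq0$ by Corollary \ref{cor:equifocal_minimal}, and by Proposition \ref{pro:t-infty_costheta} also $\delta_{F}+c(t)\neq0$ for all $t$, so $H(\xi(t))\neq0$.

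\textbf{Step 2: case (i).} If $2\kappa\epsilon_{F}=1$, then Proposition \ref{pro:eF1_deltaF0} gives $\delta_{F}=0$. The numerator becomes $2-2(1-c^{2})=2c^{2}$ and the denominator becomes $(m_{1}^{F}+m_{2}^{F})c^{2}$. So the ratio is identically $2/(m_{1}^{F}+m_{2}^{F})$.

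\textbf{Step 3: case (ii).} Since $\kappa>0$, $\epsilon_{F}>0$ and $0<\bar{\theta}_{F}^{(\xi(t))}<\pi$, the term $4\kappa\epsilon_{F}\sin^{2}\bar{\theta}_{F}^{(\xi(t))}$ is strictly positive. Together with $|c|<1$ this gives
\begin{equation}
\text{numerator}<2(1+\delta_{F}c)\le2(1+|\delta_{F}|)=\dfrac{4\max\{m_{1}^{F},m_{2}^{F}\}}{m_{1}^{F}+m_{2}^{F}}.
\end{equation}
For the denominator, Proposition \ref{pro:t-infty_costheta} gives
\begin{equation}
(\delta_{F}+c(t))^{2}=\exp(\tfrac{1}{\epsilon_{F}}t)(\delta_{F}+\cos\bar{\theta}_{F})^{2}.
\end{equation}
Substituting both into the ratio formula yields \eqref{eqn:main3}. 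This bound actually holds in all cases; the case split only serves to isolate the exact identity in (i).

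\textbf{Main obstacle.} There is no real analytic difficulty here. The one point needing care is the strictness of the inequality, which rests on $\kappa>0$ for a compact-type Einstein space and on $\sin\bar{\theta}_{F}^{(\xi(t))}\neq0$ throughout the flow. The latter holds because the integral curve stays in the open fundamental domain $\widetilde{C}$.
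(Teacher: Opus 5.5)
Your proof is correct and follows the paper's route: you express the ratio through $u=\cos\bar{\theta}_{F}^{(\xi(t))}$ via Proposition~\ref{pro:HtAtnorm}, obtain (i) from $\delta_{F}=0$ (Proposition~\ref{pro:eF1_deltaF0}), and in (ii) control the denominator with Proposition~\ref{pro:t-infty_costheta}, whose missing $\cos$ you rightly restore. The only difference is in bounding the numerator $1-2\kappa\epsilon_{F}+\delta_{F}u+2\kappa\epsilon_{F}u^{2}$: the paper locates the axis of this quadratic and compares endpoint values, whereas you drop the strictly negative term $-2\kappa\epsilon_{F}(1-u^{2})$, which gives the same bound $1+|\delta_{F}|$ more directly and also shows that the inequality in (ii) holds in case (i) too.
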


\begin{proof}
(i) This assertion is immediately from \eqref{eqn:A-2mmH_ep} and Proposition \ref{pro:eF1_deltaF0}.
(ii) By using Propositions \ref{pro:HtAtnorm} and \ref{pro:t-infty_costheta},
a direct calculation shows
\begin{equation}\label{eqn:ahext_eqn}
\dfrac{\|A(\xi(t))\|^{2}}{H(\xi(t))^{2}}\exp(\dfrac{1}{\epsilon_{F}}\cdot t)
=\dfrac{2}{m_{1}^{F}+m_{2}^{F}}\dfrac{1-2\kappa\epsilon_{F}+\delta_{F}\cos\bar{\theta}_{F}^{(\xi(t))}+2\kappa\epsilon_{F}\cos^{2}\bar{\theta}_{F}^{(\xi(t))}}{(\delta_{F}+\cos\bar{\theta}_{F})^{2}}.
\end{equation}
Here, we consider the following function $f$ of $u=\cos\bar{\theta}_{F}^{(\xi(t))}$:
\begin{equation}
f(u):=1-2\kappa\epsilon_{F}+\delta_{F}u+2\kappa\epsilon_{F}u^{2}
\quad (-1<u<1).
\end{equation}
The axis of this quadratic function $f$ is given by the following equation:
\begin{equation}
u=-\dfrac{\delta_{F}}{4\kappa\epsilon_{F}}.
\end{equation}
If $\delta_{F}\geq 0$, then $f(u)<f(1)=1+\delta_{F}$.
If $\delta_{F}<0$, then $f(u)<f(-1)=1-\delta_{F}$.
Hence, in both cases, we have $f(u)<1+|\delta_{F}|$.
From this, we conclude:
\begin{align}
\dfrac{\|A(\xi(t))\|^{2}}{H(\xi(t))^{2}}\exp(\dfrac{1}{\epsilon_{F}}\cdot t)
&<\dfrac{2(1+|\delta_{F}|)}{(m^{F}_{1}+m^{F}_{2})}
\cdot\dfrac{1}{(\delta_{F}+\cos\bar{\theta}_{F})^{2}}\\
&=\dfrac{4\max\{m_{1}^{F},m_{2}^{F}\}}{(m^{F}_{1}+m^{F}_{2})^{2}}
\cdot\dfrac{1}{(\delta_{F}+\cos\bar{\theta}_{F})^{2}}.
\end{align}
Thus we have completed the proof.
\end{proof}

\begin{thm}\label{thm:BMCF_HA_evaluate}
Assume that the initial equifocal hypersurface $M$
is not minimal.
In the case of $2\kappa\epsilon_{F} \neq 1$,
there exist $t_{0}>0$ and $c_{1},c_{2}>0$ satisfying the following inequalities:
\begin{equation}\label{eqn:A^2/H^2_evaluate}
c_{2}\cdot\exp(-\dfrac{1}{\epsilon_{F}}\cdot t)
\leq 
\dfrac{\|A(\xi(t))\|^{2}}{H(\xi(t))^{2}}
\leq c_{1}\cdot\exp(-\dfrac{1}{\epsilon_{F}}\cdot t)
\quad
(t<-t_{0}).
\end{equation}
\end{thm}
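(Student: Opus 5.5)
The plan is to start from the exact identity \eqref{eqn:ahext_eqn} established in the proof of Theorem \ref{thm:FRA_At_Ht_ratio}:
\begin{equation}
\dfrac{\|A(\xi(t))\|^{2}}{H(\xi(t))^{2}}\exp(\dfrac{1}{\epsilon_{F}}\cdot t)
=\dfrac{2}{m_{1}^{F}+m_{2}^{F}}\cdot\dfrac{f(u(t))}{(\delta_{F}+\cos\bar{\theta}_{F})^{2}},
\end{equation}
where $u(t):=\cos\bar{\theta}_{F}^{(\xi(t))}$ and $f(u)=1-2\kappa\epsilon_{F}+\delta_{F}u+2\kappa\epsilon_{F}u^{2}$. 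Since $M$ is not minimal, $\delta_{F}+\cos\bar{\theta}_{F}\neq 0$, so the denominator is a fixed positive constant. The two-sided bound \eqref{eqn:A^2/H^2_evaluate} therefore reduces to showing that $f(u(t))$ stays between two positive constants for all sufficiently negative $t$.

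For the upper bound there is nothing new to do: Theorem \ref{thm:FRA_At_Ht_ratio} (ii) already gives $f(u)<1+|\delta_{F}|$ for every $u\in(-1,1)$. Hence $c_{1}:=2(1+|\delta_{F}|)/\{(m_{1}^{F}+m_{2}^{F})(\delta_{F}+\cos\bar{\theta}_{F})^{2}\}$ works for all $t$.

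For the lower bound, use \eqref{eqn:cos_t2-inf}, that is, $u(t)\to-\delta_{F}$ as $t\to-\infty$. Since $f$ is continuous, $f(u(t))\to f(-\delta_{F})$. Computing,
\begin{equation}
f(-\delta_{F})=1-2\kappa\epsilon_{F}-\delta_{F}^{2}+2\kappa\epsilon_{F}\delta_{F}^{2}=(1-2\kappa\epsilon_{F})(1-\delta_{F}^{2}).
\end{equation}
By \eqref{eqn:epF_range} together with the hypothesis $2\kappa\epsilon_{F}\neq1$, we have $1-2\kappa\epsilon_{F}>0$. Also $|\delta_{F}|<1$, so $f(-\delta_{F})>0$. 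Choose $t_{0}>0$ so that $f(u(t))\geq \frac12 f(-\delta_{F})$ for $t<-t_{0}$. Then set
\begin{equation}
c_{2}:=\dfrac{(1-2\kappa\epsilon_{F})(1-\delta_{F}^{2})}{(m_{1}^{F}+m_{2}^{F})(\delta_{F}+\cos\bar{\theta}_{F})^{2}}.
\end{equation}
Multiplying the resulting two-sided bound by $\exp(-t/\epsilon_{F})$ gives \eqref{eqn:A^2/H^2_evaluate}.

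The only substantive point is the positivity of the limit $f(-\delta_{F})$. This is exactly where the hypothesis $2\kappa\epsilon_{F}\neq1$ enters: in the excluded case $f(-\delta_{F})=0$, and indeed the ratio behaves differently there, as Theorem \ref{thm:FRA_At_Ht_ratio} (i) shows. One should also double-check that \eqref{eqn:ahext_eqn} is legitimate, i.e.\ that $H(\xi(t))\neq 0$ for all $t$. This follows from Proposition \ref{pro:t-infty_costheta}, since $\delta_{F}+u(t)$ is a nonzero constant times an exponential.
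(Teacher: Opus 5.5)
Your proposal is correct and follows essentially the paper's argument: both show that $\frac{\|A(\xi(t))\|^{2}}{H(\xi(t))^{2}}\exp(\frac{1}{\epsilon_{F}}t)$ tends as $t\to-\infty$ to the positive constant $\frac{2}{m_{1}^{F}+m_{2}^{F}}\cdot\frac{(1-\delta_{F}^{2})(1-2\kappa\epsilon_{F})}{(\delta_{F}+\cos\bar{\theta}_{F})^{2}}$, with positivity coming from \eqref{eqn:epF_range} together with $2\kappa\epsilon_{F}\neq1$. The paper gets this limit by dividing the limits in Propositions \ref{pro:FRA_At_-inf} and \ref{pro:FRA_Ht_-inf}, whereas you pass to the limit in \eqref{eqn:ahext_eqn} and give explicit $c_{1},c_{2}$, with your upper bound valid for all $t$ via $f(u)<1+|\delta_{F}|$.
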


\begin{proof}
By using Propositions
\ref{pro:FRA_At_-inf} and
\ref{pro:FRA_Ht_-inf},
we have
\begin{equation}
\lim_{t\to-\infty}
\dfrac{\|A(\xi(t))\|^{2}}{H(\xi(t))^{2}}\exp(\dfrac{1}{\epsilon_{F}}\cdot t)
=\dfrac{2}{m_{1}^{F}+m_{2}^{F}}
\dfrac{(1-\delta_{F}^{2})(1-2\kappa\epsilon_{F})}
{(\delta_{F}+\cos\bar{\theta}_{F})^{2}}>0
\end{equation}
from which the assertion holds.
\end{proof}

Finally, we give an estimate for the traceless part 
$\mathring{A}(\xi(t))$ of $A(\xi(t))$
along the solution $\{M_{\tilde{\bs{\xi}}(t)}\}_{t\in(-\infty,T_{\max})}$.
We put
\begin{equation}\label{eqn:phixit}
\phi(\xi(t)):=\|\mathring{A}(\xi(t))\|^{2},
\quad
\mathring{A}(\xi(t)):=A(\xi(t))-\dfrac{H(\xi(t))}{\dim M_{\tilde{\bs{\xi}}(t)}}\mathrm{id}.
\end{equation}
It is verified that the following equality holds:
\begin{equation}\label{eqn:mathringA_A_H}
\phi(\xi(t))=\|A(\xi(t))\|^{2}-\dfrac{1}{\dim M}H(\xi(t))^{2}.
\end{equation}
Since $M_{\tilde{\bs{\xi}}(t)}$ converges to $M_{\min}$ as $t\to-\infty$,
we have
\begin{equation}
\lim_{t\to-\infty}\phi(\xi(t))=\dfrac{1-2\kappa\epsilon_{F}}{2\epsilon_{F}}.
\end{equation}
Hence, for any $0<h<1$,
there exists $t_{0}>0$ satisfying the following inequalities:
\begin{equation}\label{eqn:ineq_mathringA}
\dfrac{1-2\kappa\epsilon_{F}}{2\epsilon_{F}}-h
\leq
\phi(\xi(t))
\leq
\dfrac{1-2\kappa\epsilon_{F}}{2\epsilon_{F}}+h
\quad
(t < -t_{0}).
\end{equation}
However, as pointed out by Liu-Terng \cite[Remark 4.19]{LiuTe20},
it follows from \cite{O70} that
there exists a minimal hypersurface $L$ in a sphere which is not isoparametric
and whose shape operator $A^{L}$ satisfies $n-c<\|A^{L}\|^{2}<n+c$
for any sufficiently small $c>0$.
This motivates the following refinement of \eqref{eqn:ineq_mathringA}.

\begin{thm}\label{thm:prop_phi}
Let $N$ be a simply-connected irreducible symmetric space of compact type
and $\kappa$ be the Einstein constant of $N$.
Then, for any $0<h<1$, there exists $c_{0}>0$ such that
$\phi(\xi(t))$ defined in \eqref{eqn:phixit}
satisfies one of the followings:
\begin{enumerate}
\item[(i)] The case of $\delta_{F}=0$:
If $\theta_{F}\in (\theta_{\min}-c_{0},\theta_{\min}+c_{0})$,
we have
\begin{equation}
\dfrac{1-2\kappa\epsilon_{F}}{2\epsilon_{F}}
\leq
\phi(\xi(t))
\leq
\dfrac{1-2\kappa\epsilon_{F}}{2\epsilon_{F}}+h\quad
(t<0).
\end{equation}
\item[(ii)] The case of $\delta_{F}>0$: If $\theta_{F}\in (\theta_{\min}-c_{0},\theta_{\min})$,
we have
\begin{equation}
\dfrac{1-2\kappa\epsilon_{F}}{2\epsilon_{F}}-h
\leq
\phi(\xi(t))
\leq
\dfrac{1-2\kappa\epsilon_{F}}{2\epsilon_{F}}
\quad
(t<0).
\end{equation}
If $\theta_{F}\in (\theta_{\min},\theta_{\min}+c_{0})$,
we have
\begin{equation}
\dfrac{1-2\kappa\epsilon_{F}}{2\epsilon_{F}}\leq
\phi(\xi(t))
\leq
\dfrac{1-2\kappa\epsilon_{F}}{2\epsilon_{F}}+h
\quad
(t<0).
\end{equation}
\item[(iii)] The case of $\delta_{F}<0$:
If $\theta_{F}\in (\theta_{\min}-c_{0},\theta_{\min})$,
we have
\begin{equation}
\dfrac{1-2\kappa\epsilon_{F}}{2\epsilon_{F}}
\leq
\phi(\xi(t))
\leq
\dfrac{1-2\kappa\epsilon_{F}}{2\epsilon_{F}}+h
\quad
(t<0).
\end{equation}
If $\theta_{F}\in (\theta_{\min},\theta_{\min}+c_{0})$,
we have
\begin{equation}
\dfrac{1-2\kappa\epsilon_{F}}{2\epsilon_{F}}-h
\leq
\phi(\xi(t))
\leq
\dfrac{1-2\kappa\epsilon_{F}}{2\epsilon_{F}}
\quad
(t<0).
\end{equation}
\end{enumerate}
\end{thm}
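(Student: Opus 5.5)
Write $c:=\cos\bar{\theta}_{F}^{(\xi(t))}$ and $\phi_{\infty}:=\frac{1-2\kappa\epsilon_{F}}{2\epsilon_{F}}$; here $\theta_{\min}$ is the $\theta$-value with $\delta_{F}+\cos(\pi\frac{2g_{F}}{l}\theta_{\min})=0$, i.e.\ the focal datum of $M_{\min}$ from Corollary \ref{cor:minimal_FRA_ca_equi}. The plan is to express $\phi(\xi(t))$ as an explicit function of the single variable $c\in(-1,1)$, to locate $\phi_{\infty}$ as the value at $c_{*}:=-\delta_{F}$, and then to use monotonicity of $c$ in $t$ (Proposition \ref{pro:t-infty_costheta}) to control the sign of $\phi-\phi_{\infty}$ and its size for $t<0$.

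\textbf{Step 1.} By \eqref{eqn:mathringA_A_H} and Proposition \ref{pro:HtAtnorm}, using $\sin^{2}=1-c^{2}$ and $\frac{\pi^{2}g_{F}^{2}(m_{1}^{F}+m_{2}^{F})^{2}}{l^{2}}=\frac{m_{1}^{F}+m_{2}^{F}}{4\epsilon_{F}}$, we get
\begin{equation}
\phi=\frac{1}{4\epsilon_{F}(1-c^{2})}\Bigl\{2(1+\delta_{F}c)-\frac{m_{1}^{F}+m_{2}^{F}}{n}(\delta_{F}+c)^{2}\Bigr\}-\kappa=:F(c),
\end{equation}
with $n=\dim M$. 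At $c=-\delta_{F}$ this gives $F(-\delta_{F})=\frac{2(1-\delta_{F}^{2})}{4\epsilon_{F}(1-\delta_{F}^{2})}-\kappa=\phi_{\infty}$.

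\textbf{Step 2.} Compute the sign of $F(c)-\phi_{\infty}$ near $c_{*}$. Set $a:=(m_{1}^{F}+m_{2}^{F})/n$. Then
\begin{equation}
4\epsilon_{F}(1-c^{2})(F-\phi_{\infty})=2(1+\delta_{F}c)-2(1-c^{2})-a(\delta_{F}+c)^{2}=2c(\delta_{F}+c)-a(\delta_{F}+c)^{2}.
\end{equation}
Hence $F(c)-\phi_{\infty}=\frac{(\delta_{F}+c)\,[(2-a)c-a\delta_{F}]}{4\epsilon_{F}(1-c^{2})}$. Near $c=-\delta_{F}$ the second factor is approximately $-2\delta_{F}$.

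If $\delta_{F}\neq0$, the sign of $F-\phi_{\infty}$ is therefore $-\operatorname{sign}(\delta_{F})\operatorname{sign}(\delta_{F}+c)$. If $\delta_{F}=0$, the expression is $\frac{(2-a)c^{2}}{4\epsilon_{F}(1-c^{2})}\ge0$; this needs $a\le2$, i.e.\ $m_{1}^{F}+m_{2}^{F}\le 2n$. That inequality follows from $g_{F}\ge1$ if $n=g_{F}(m_{1}^{F}+m_{2}^{F})/2$ holds as in spheres, and otherwise I expect it from the eigenvalue count in \eqref{eqn:tildeA_nzeigen} projected to $T_{x}M$.

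\textbf{Step 3.} Since $\theta=\theta_{F}-\xi$ and $\cos$ is decreasing on $(0,\pi)$, the condition $\theta_{F}<\theta_{\min}$ means $\delta_{F}+c_{0}'>0$ at $t=0$, where $c_{0}'$ is the value of $c$ at $t=0$. By Proposition \ref{pro:t-infty_costheta}, for $t<0$ the quantity $\delta_{F}+c$ keeps its sign and decreases monotonically in absolute value to $0$. Consequently $c$ stays between $c_{0}'$ and $-\delta_{F}$ for all $t<0$.

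Plugging this into the sign formula of Step 2 gives the following.
\begin{enumerate}
\item[(a)] For $\delta_{F}>0$ and $\theta_{F}<\theta_{\min}$ we get $\phi\le\phi_{\infty}$; for $\theta_{F}>\theta_{\min}$ we get $\phi\ge\phi_{\infty}$.
\item[(b)] For $\delta_{F}<0$ the signs are reversed.
\item[(c)] For $\delta_{F}=0$ we get $\phi\ge\phi_{\infty}$ on both sides.
\end{enumerate}
These are exactly the one-sided bounds in (i)--(iii). To make the second factor's sign valid we need $c$ close to $-\delta_{F}$, which again holds for all $t<0$ once $c_{0}'$ is close enough.

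\textbf{Step 4.} For the $h$-side, $F$ is continuous near $c_{*}\in(-1,1)$, so there is $\rho>0$ with $|F(c)-\phi_{\infty}|\le h$ and the sign of the second factor fixed for $|c+\delta_{F}|<\rho$. Choose $c_{0}$ so that $|\theta_{F}-\theta_{\min}|<c_{0}$ implies $|c_{0}'+\delta_{F}|<\rho$. Monotonicity then keeps $|c+\delta_{F}|<\rho$ for all $t<0$.

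The main obstacle is justifying $a\le 2$ in the $\delta_{F}=0$ case, together with the uniform choice of $\rho$ so that the second factor keeps its sign; everything else is an explicit computation.
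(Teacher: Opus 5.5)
Your proposal is correct and is essentially the paper's proof. Both rely on the factorization $\phi-\frac{1-2\kappa\epsilon_F}{2\epsilon_F}=\frac{(\delta_F+c)\{(2-a)c-a\delta_F\}}{4\epsilon_F(1-c^2)}$, reading the sign near $c=-\delta_F$ off the second factor (the paper does this via $f'(-\delta_F)$), together with Proposition \ref{pro:t-infty_costheta} to keep $c(t)$ between its initial value and $-\delta_F$ for $t<0$; you make this last point more explicit than the paper does.

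Neither obstacle you flag is real. Each $m_i^F=\dim d\psi_{\bs{\xi}}(\Ker d\exp^{\perp}_{\bs{\xi}})$ is the dimension of a subspace of $T_xM$, so $m_1^F+m_2^F\le 2\dim M$, i.e.\ $a\le 2$, and the paper uses this tacitly. The choice of $\rho$ is just continuity at $c=-\delta_F$, where the second factor equals $-2\delta_F\neq 0$.
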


\begin{proof}
For simplicity, we put $u:=\cos\bar{\theta}_{F}^{(\xi(t))}$.
By using 
\eqref{eqn:H(t)_formula},
\eqref{eqn:A(t)_formula} and
\eqref{eqn:mathringA_A_H},
a direct calculation shows
\begin{align}
\phi(\xi(t))-\dfrac{1-2\kappa\epsilon_{F}}{2\epsilon_{F}}
&=\dfrac{1}{2\epsilon_{F}}\cdot
\dfrac{1}{1-u^{2}}(u+\delta_{F})\left\{u-\dfrac{m_{1}^{F}+m_{2}^{F}}{2\dim M}(u+\delta_{F})\right\}\\
&=\dfrac{2\dim M-(m_{1}^{F}+m_{2}^{F})}{4\epsilon_{F} \dim M}
\cdot
\dfrac{1}{1-u^{2}}(u+\delta_{F})\left\{u-\dfrac{(m_{1}^{F}+m_{2}^{F})\delta_{F}}{2\dim M - (m_{1}^{F}+m_{2}^{F})}\right\}.
\end{align}
We define the function $f$ of $u$ by
\begin{equation}
f(u):=\dfrac{1}{1-u^{2}}(u+\delta_{F})\left\{u-\dfrac{(m_{1}^{F}+m_{2}^{F})\delta_{F}}{2\dim M - (m_{1}^{F}+m_{2}^{F})}\right\}
\quad
(-1<u<1).
\end{equation}
(i) When $\delta_{F}=0$, the expression of the function $f$ simplifies
$f(u)=u^{2}/(1-u^{2})\geq 0$.
The equality holds if and only if $u=0$.
Hence we have $\phi(\xi(t))\geq (1-2\kappa\epsilon_{F})/(2\epsilon_{F})$.

When $\delta_{F}\neq 0$,
we obtain
\begin{equation}
f'(-\delta)=-\dfrac{2\dim M}{2\dim M-(m_{1}^{F}+m_{2}^{F})}\cdot\dfrac{\delta_{F}}{1-\delta_{F}^{2}}.
\end{equation}
(ii) $\delta_{F}>0$ yields $f'(-\delta_{F})<0$,
from which the function $f$ is decreasing
in a neighborhood of $u=-\delta_{F}$.
Hence the assertion (ii) holds.
A similar argument shows the assertion (iii) holds.
Therefore, we have completed the proof.
\end{proof}

\appendix

\section{The constancy of $\operatorname{Ric}^{N}(\xi,\xi)$}
\label{sec:const_ricxixi}

We show that the following fact holds
for any (connected) equifocal hypersurface in a reducible symmetric space
of compact type.

\begin{pro}
Let $N$ be a reducible symmetric space of compact type
and $M$ be a connected equifocal hypersurface in $N$.
Then $\operatorname{Ric}^{N}(\bs{\xi},\bs{\xi})$ is constant  on $M$
for a globally defined unit normal vector field $\bs{\xi}$.
\end{pro}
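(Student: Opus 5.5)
The plan is to write $N=N_{1}\times\dotsb\times N_{r}$ as a product of irreducible symmetric spaces of compact type. Each factor is Einstein, with constant $\kappa_{a}$. Decompose the unit normal accordingly as $\bs{\xi}=\sum_{a}\bs{\xi}^{(a)}$ with $\bs{\xi}^{(a)}\in TN_{a}$. Since $\operatorname{Ric}^{N}$ is the orthogonal sum of the $\operatorname{Ric}^{N_{a}}$, we get
\begin{equation}
\operatorname{Ric}^{N}(\bs{\xi},\bs{\xi})=\sum_{a}\kappa_{a}\|\bs{\xi}^{(a)}\|^{2}.
\end{equation}
It therefore suffices to show that each function $x\mapsto\|\bs{\xi}^{(a)}_{x}\|^{2}$ is constant on $M$.

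The key input is the abelian and equifocal structure. For $x\in M$, the normal geodesic $\gamma_{\bs{\xi}_{x}}$ is a closed circle of length $l$ that is independent of $x$ (Theorem 1.6 (a) of \cite{TeTh95}, quoted in Section 2). A geodesic in a product is closed exactly when each component geodesic closes with a common period. The component of $\gamma_{\bs{\xi}_{x}}$ in $N_{a}$ is a geodesic of $N_{a}$ with speed $\|\bs{\xi}^{(a)}_{x}\|$, and it lies in a maximal flat of $N_{a}$. I would argue as follows.

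\textbf{Continuity.} The map $x\mapsto\bs{\xi}^{(a)}_{x}$ is continuous on the connected manifold $M$.

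\textbf{Discreteness.} The set of unit vectors $v\in T_{p}N$ whose geodesic closes with period exactly $l$ has, at each point, speed ratios $\|v^{(a)}\|$ lying in a discrete set. For a flat torus $T^{k}=\mathbb{R}^{k}/\Lambda$, a closed geodesic of period $l$ has velocity in $\Lambda/l$. Its component norms therefore come from the lattice vectors of a fixed length, and these form a finite set. Because $N$ is homogeneous, this finite set of possible values is independent of the base point.

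\textbf{Conclusion.} A continuous function with values in a finite set is constant on a connected space, so each $\|\bs{\xi}^{(a)}\|^{2}$ is constant.

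The main obstacle is the discreteness step. Flats through different points are conjugate under the isometry group, and the isometry group of $N$ preserves each de Rham factor up to permutation of isometric factors. So the set of admissible values of $(\|v^{(a)}\|^{2})_{a}$ for closed geodesics of length $l$ is a fixed countable set $S$, independent of the point. Continuity then gives constancy. If lattice discreteness proves awkward, I would switch to an alternative. Using the singular Riemannian foliation by parallel hypersurfaces, the focal radii along $\gamma_{\bs{\xi}_{x}}$ are fixed (Theorem \ref{thm_TeTh95_thm1.6.b}). I would then relate the focal structure in each factor to the roots of $N_{a}$ evaluated on $\bs{\xi}^{(a)}_{x}$. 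The focal times are zeros of $\sin(\sqrt{\alpha(\bs{\xi})}\,t)$ for roots $\alpha$ of the factors, and their invariance in $x$ forces each $\alpha(\bs{\xi}^{(a)}_{x})$ to take finitely many values, hence to be constant. Summing the squares of root values over each factor recovers $\kappa_{a}\|\bs{\xi}^{(a)}\|^{2}$.
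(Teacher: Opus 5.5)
Your proof is correct, but it takes a different route from the paper.

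\textbf{The paper's route.} The paper never splits $N$. It invokes Br\"uck's theorem: for any curve $c$ in $M$ from $x$ to $y$ there is an isometry $g$ in the identity component $I_{0}(N)$ of the isometry group with $g(x)=y$ and $dg_{x}|_{T^{\perp}_{x}M}=P^{\perp}_{c}$. Hence $\bs{\xi}_{y}=dg_{x}(\bs{\xi}_{x})$, and constancy follows from the isometry invariance of $\operatorname{Ric}^{N}$. That argument takes two lines and makes every isometry-invariant function of $\bs{\xi}$ constant, but it imports a deep structural result.

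\textbf{Your route.} You use the de Rham splitting into Einstein factors to reduce to the constancy of each $\|\bs{\xi}^{(a)}\|$. After that you need only three things: the fact already quoted in Section 2 that the normal geodesics are closed with a common length $l$, the discreteness of the unit lattice of a maximal flat, and the conjugacy of maximal flats under the isotropy group. This is elementary and self-contained. It even gives the finer statement that each factor component of $\bs{\xi}$ has constant length.

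As written your argument is tailored to Ricci. However, the same continuity-plus-discreteness argument can be applied to the $I_{0}(N)$-orbit of $l\bs{\xi}_{x}$. Vectors $w$ with $\exp_{x}(w)=x$ and $\|w\|=l$ fall into finitely many such orbits, corresponding to Weyl-group orbits of lattice vectors of length $l$. This would produce $g\in I_{0}(N)$ with $g(x)=y$ and $dg_{x}(\bs{\xi}_{x})=\bs{\xi}_{y}$, which is exactly what the paper extracts from Br\"uck in the hypersurface case.

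\textbf{Your fallback.} Discard the fallback via roots. The focal times of a hypersurface along $\gamma_{\bs{\xi}_{x}}$ are not the zeros of $\sin(\sqrt{\alpha(\bs{\xi})}\,t)$; those govern conjugate points of $x$. Focal times also depend on the shape operator, so that argument would not go through as stated. Your main argument does not need it.
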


\begin{proof}
Let $x$ and $y$ be arbitrary points of $M$.
Since $M$ is connected, there exists a smooth curve
$c:[0,1]\to M$ such that $c(0)=x$ and $c(1)=y$.
Denote by
$P^{\perp}_{c}|_{[0,1]}:T^{\perp}_{x}M\to T^{\perp}_{y}M$
the parallel transport with respect to the normal connection
$\nabla^\perp$ along $c$.
It follows from
\cite[Theorem 5.1 and Corollary 5.2]{Bruck} that
there exists an isometry $g$ in the identity component
of the isometry group of $N$ such that
$g(x)=y$
and
$(dg)_{x}|_{T^{\perp}_{x}M}=P^{\perp}_{c}|_{[0,1]}$.
In particular, we get $\bs{\xi}_{y}=(dg)_{x}(\bs{\xi}_{x})$.
Since $\operatorname{Ric}^{N}$ is invariant under the ambient isometry $g$,
we obtain
\begin{equation}
\operatorname{Ric}^{N}_{y}(\bs{\xi}_{y},\bs{\xi}_{y})
=\operatorname{Ric}^{N}_{g(x)}((dg)_{x}(\bs{\xi}_{x}),(dg)_{x}(\bs{\xi}_{x}))
=\operatorname{Ric}^{N}_{x}(\bs{\xi}_{x},\bs{\xi}_{x}). 
\end{equation}
By the arbitrariness of $x$ and $y$,
$\operatorname{Ric}^N(\bs{\xi},\bs{\xi})$ is constant on $M$.
\end{proof}

\subsubsection*{Acknowledgements}
The first author would like to express his sincere gratitude to Professor Peter Quast
for the warm hospitality, continuous support, and stimulating discussions
during his stay at the University of Augsburg.

\end{document}